\theoremstyle{plain}
\newtheorem{theorem}{Theorem}[section]
\newtheorem{corollary}[theorem]{Corollary}
\newtheorem{lemma}[theorem]{Lemma}
\newtheorem{proposition}[theorem]{Proposition}
\newtheorem{remark}[theorem]{Remark}
\newtheorem{definition}[theorem]{Definition}
\newtheorem{example}[theorem]{Example}
\begin{document}

\title{Traces on operator ideals
and arithmetic means}
\author{Victor Kaftal} 
\address{University of Cincinnati\\ 
         Department of Mathematics\\
         Cincinnati, OH, 45221-0025\\
         USA} 
\email{victor.kaftal@uc.edu}
\author[Gary Weiss]{Gary Weiss*}\thanks{*The second named author was partially supported by NSF Grants DMS 95-03062
     and DMS 97-06911 and both authors by grants from The Charles Phelps Taft Research Center.}
\email{gary.weiss@uc.edu}

\keywords{traces, operator ideals, commutators, arithmetic means}
\subjclass{Primary: 47B47, 47B10, 47L20; Secondary: 46A45, 46B45}

\begin{abstract}
This article investigates the codimension of commutator spaces $[I, B(H)]$ of operator ideals on a separable Hilbert space,
i.e., ``How many traces can an ideal support?"
We conjecture that the codimension can be only zero, one, or infinity.
Using the arithmetic mean (am) operations on ideals introduced in \cite{DFWW}
and the analogous arithmetic mean operations at infinity (am-$\infty$) that we develop extensively in this article,
the conjecture is proven for all ideals not contained in the largest am-$\infty$ stable ideal and not containing the smallest am-stable ideal,
for all soft-edged ideals (i.e., $I=se(I) = IK(H)$) and all soft-complemented ideals \linebreak
(i.e., $I=scI= I/K(H)$), 
which include all classical operator ideals we considered.
The conjecture is proven also for ideals whose soft-interior, $seI$, or soft-cover, $scI$, are am-$\infty$ stable or am-stable and for other classes of ideals.
We show that an ideal of trace class operators supports a unique trace (up to scalar multiples) if and only if it is am-$\infty$ stable.
For a principal ideal, am-$\infty$ stability is what we call regularity at infinity of the sequence of s-numbers of the generator.
We prove for these sequences analogs of several of the characterizations of usual regularity.
In the process we apply trace extension methods to two problems on elementary operators studied by V. Shulman.
This article presents and extends several of the results announced in PNAS-US \cite{vKgW02} and then expanded and developed in a series of papers  \cite{vKgW07-soft}-\cite{vKgW07-Majorization}. 
\end{abstract}

\maketitle

\section{\leftline {\bf Introduction}}

The study of operator ideals - two-sided ideals of the algebra $B(H)$ of bounded linear operators on a complex separable infinite dimensional Hilbert space $H$ - started with J. Calkin \cite{jC41} in 1941. 
From early on (e.g., \cite{pH54}, \cite{BP65}, \cite{PT71} and \cite{jA77}), 
central in this area was the notion of commutator space and the related notion of trace. 
The commutator space (or commutator ideal) $[I,B(H)]$ of an ideal $I$ is the linear span of the commutators of operators in $I$ with operators in $B(H)$.
A trace on an operator ideal is a linear functional (not necessarily positive) that vanishes on its commutator space 
or, equivalently, that is unitarily invariant.  

The introduction of cyclic cohomology in the early 1980's by 
A. Connes and its linkage to algebraic K-theory by 
M. Wodzicki in the 1990's 
provided additional motivation for the complete determination of the structure of commutator spaces.
(Cf. \cite{aC82}, \cite{aC83}, \cite{aC85}  and \cite{mW94}.) 

This was achieved by K. Dykema, T. Figiel, G. Weiss and M. Wodzicki 
\cite{DFWW} and \cite{kDgWmW00} who fully characterized commutator spaces in terms of arithmetic (Cesaro) means of monotone sequences 
\cite[Theorem 5.6]{DFWW}
thus concluding a line of research introduced in G. Weiss' PhD thesis \cite{gW75} (see also \cite{gW80} and \cite{gW86})
and developed significantly by 
N. Kalton in \cite{nK89}. 

The introduction of arithmetic mean operations on ideals and the results in \cite{DFWW} in particular, 
opened up a new area of investigation in the study of operator ideals and have become an intrinsic part of the theory.
To explore this area is the goal of our program outlined in \cite{vKgW02}, of which this paper and \cite{vKgW07-soft} are the beginning. 
In this paper we focus mainly on the question: ``How many nonzero traces can an ideal support?" and on developing tools to investigate it.

From \cite{DFWW} we know that an ideal supports ``no nonzero traces" precisely when the ideal is stable under the arithmetic mean (am-stable). 

In Section 6 we prove that an ideal that does not contain the diagonal operator $diag <1,\frac{1}{2},\frac{1}{3},\dots>$ supports ``one" nonzero trace  precisely when the ideal is stable under the arithmetic mean at infinity (am-$\infty$ stable). Here, what is meant by ``one" is that the ideal supports a trace that is unique up to scalar multiples. 

In Section 7 we prove that ``infinitely many" traces are supported by ideals whose soft-interior or soft-complement are not am-stable or not am-$\infty$ stable and by other classes of ideals as well. 
This motivates our conjecture that the number of traces that an ideal can support must always be either ``none", ``one", or ``infinitely many". 

In the first part of this paper we develop the above mentioned notions of arithmetic mean, arithmetic mean at infinity, soft interior and soft complement of ideals as their interplay provides the tools for this study.

The arithmetic mean of operator ideals was introduced and played an important role in \cite{DFWW}; we review some if its properties in Section 2. 

While the soft-interior and soft-complement of ideals have appeared implicitly in numerous situations in the literature, to the best of our knowledge they have never been formally studied before. 
We introduce them briefly in Section 3 and study their interplay with the am operations; 
we leave to \cite{vKgW07-soft} a more complete development of these notions and of the ensuing ideal classes. 

The arithmetic mean at infinity was used among others in \cite{AGPS87}, \cite{DFWW}, and \cite{mW02} as an operation on sequences. In Section 4 we develop the properties of the am-$\infty$ operations on ideals which parallel only in part those of the am operations and we study their interplay with the soft-interior and soft-complement operations. 
The notion of regularity for sequences, which figured prominently in the study of principal ideals in \cite{GK69} and was essential for the study of positive traces on principal ideals in \cite{jV89}, has a dual form for summable sequences that we call regularity at infinity (Definition \ref{D:am-infty stability/regularity}). 
In Theorem \ref{T: Theorem 4.12} we link regularity at infinity to other sequence properties, including a Potter type inequality used by Kalton in \cite{nK87} and Varga type properties (cf. \cite{jV89}) and to the Matuszewska index introduced in this context in \cite{DFWW}. 

In Section 5, we study trace extensions from one ideal to another and in the process we obtain hereditariness (solidity) of the cone of positive operators 
$(\mathscr L_1+ [I,B(H)])^+$ where $\mathscr L_1$ is the trace class. 
(The analogous result for $(F+[I, B(H)])^+$,  where $F$ is the finite rank ideal, is obtained in Corollary \ref{C:(F+[])+}.) 
These results are applied in Propositions \ref{P:FugL2} and \ref{P:Shulman problem} to two problems on elementary operators studied by V. Shulman (private communications related to \cite{vS83}). 

We do not know for which ideals, if not all, the cones $(J+[I,B(H)])^+$ are hereditary beyond the cases 
$J = \{0\}, F, \mathscr L_1$, $I \subset J$ or $J \subset [I,B(H)]$.

In Section 6 we characterize those ideals of trace class operators that support a unique trace (up to scalar multiples): 
 they are precisely the am-$\infty$ stable ideals (Theorem \ref{T:i-iii}). 

In Section 7 we bring the previously developed tools to bear on the question of how many traces an ideal can support.  

Ideals divide naturally into three classes from the perspective developed here: 

$\bullet$   the ``small" ideals, i.e., the ideals contained in the largest am-$\infty$ stable ideal $st_{a_\infty} (\mathscr L_1) \subset \mathscr L_1$, 
(see Definition 4.14) 

$\bullet$   the ``large" ideals, i.e., the ideals containing the smallest am-stable ideal $st^a(\mathscr L_1)$, (see ibid) 

$\bullet$   the ``intermediate" ideals, i.e., all remaining ideals. 

Intermediate ideals always support infinitely many traces, or more precisely, $[I,B(H)]$ has uncountable codimension in $I$ 
(Theorem \ref{T: stabilizer containments and uncountable dimension}(iii)). 

We conjecture that the codimension of $[I,B(H)]$ in $I$ can be only one or infinity for small ideals and zero or infinity for large ideals. 

The conjecture is proven for soft-edged ideals ($I=seI$) and soft-complemented ideals ($I = scI$) (Corollary \ref{C: soft-edged or soft-complemented}). 
These include all the classical ideals we examined including all those investigated in \cite{DFWW}. 

A stronger result (Theorem \ref{T: stabilizer containments and uncountable dimension}) is that if $seI$ (or, equivalently, $scI$) is not am-$\infty$ stable (for small ideals) or am-stable (for large ideals), then $[I,B(H)]$ has uncountable codimension in $I$. 

This leaves the conjecture open for small ideals that are not am-$\infty$ stable and for large ideals that are not am-stable but have am-stable soft interiors. 

The key technical tool for these results is Theorem \ref{T: BASICS: seJ notin F+[I,B(H)] implies uncountable dimension-(J+[I,B(H)])/(F+[I,B(H)])} which states that $[I,B(H)]$ has uncountable codimension in $I$ whenever $seI$ is not contained in $F+[I,B(H)]$. 

In Theorem \ref{T: false converse for T: se or sc inf codim implies} and Corollary \ref{C: false converse for T: se or sc inf codim implies+} a different technique shows that $I$ supports infinitely many traces for a class of ideals that include some cases where $seI$ is am-stable. 

Following this paper (the first of the program outlined in \cite{vKgW02}) is \cite{vKgW07-soft} where we study the soft-interior and soft-complement operations on ideals and their interplay with the am and am-$\infty$
operations. In forthcoming papers we will investigate: 

(1) Connections between (infinite) majorization theory, stochastic matrices, infinite convexity notions for ideals, diagonal invariance, 
and the am and am-$\infty$ operations \cite {vKgW07-Majorization}. 

(2) Lattice structure for $B(H)$ and for some distinguished classes of ideals and their density properties. 
Example: between two distinct principal ideals, at least one of which is am-stable (resp., am-$\infty$ stable), lies a third am-stable (resp., am-$\infty$ stable) principal ideal \cite{vKgW07-Density}. 

(3) First and second order arithmetic mean cancellation and inclusion properties \cite {vKgW07-Density}. 
Example: for which ideals $I$ does $I_a=J_a$ (resp., $I_a\subset J_a$, $I_a\supset J_a$) imply $I=J$ (resp., $I\subset J$, $I\supset J$)?
Are there ``optimal" ideals $J$ for the inclusions $I_a\subset J_a$ and $I_a\supset J_a$? 
In the principal ideal case concrete answers are obtained. 
For instance, $(\xi)_a = (\eta)_a$ implies $(\xi)=(\eta)$ for every $\eta$ if and only if $\xi$ is regular.

(4) In \cite {vKgW07-2nd} conditions on $\xi$ are given which guarantee that $(\xi)_{a^2} = (\eta)_{a^2}$ implies $(\xi)_a = (\eta)_a$ and a counterexample to this implication for the general case is provided, which settles a question of Wodzicki.

\section{\leftline {\bf Preliminaries and the arithmetic mean}}

The natural domain of the usual trace $Tr$ on $B(H)$ (with $H$ a separable infinite-dimensional complex Hilbert space) is the trace class ideal $\mathscr{L}_1$. 
However, ideals of $B(H)$ can support other traces.

\begin{definition}\label{D:trace} 
A trace $\tau$ on an ideal $I$ is a unitarily invariant linear functional on $I$. \linebreak
\end{definition}
\noindent In this paper, traces are neither assumed to be positive nor faithful. \textit{All ideals are assumed to be proper.}

Since $UXU^* - X \in [I,B(H)]$ for every $X \in I$ and every unitary operator U and since unitary operators span $B(H)$, unitarily invariant linear functionals on an ideal $I$ are precisely the linear functionals on $I$ that vanish on the \emph{commutator space}  $[I,B(H)]$. Also known as the \emph{commutator ideal} it is defined as the linear span of 
commutators of operators in $I$ with operators in $B(H)$. 
Thus traces can be identified with the elements of the linear dual of the quotient space $\frac{I}{[I,B(H)]}$  and hence to $\frac{I}{[I,B(H)]}$ itself. 

A constant theme in the theory of operator ideals has been its connection to the theory of sequence spaces.  

Calkin \cite{jC41} established a correspondence between the two-sided ideals of $B(H)$
and the \emph{characteristic sets}, i.e., the positive cones of $\text{c}_{\text{o}}^*$ 
(the collection of sequences decreasing to $0$) that are hereditary and invariant under \emph{ampliations} 
\[
\text{c}_{\text{o}}^* \owns \xi \rightarrow D_m\xi:=~<\xi_1,\dots,\xi_1,\xi_2,\dots,\xi_2,\xi_3,\dots,\xi_3,\dots>
\] 
where each entry $\xi_i$ of $\xi$ is repeated $m$-times. 
The order-preserving lattice isomorphism 
$I \rightarrow \Sigma(I)$ maps each ideal to its characteristic set $\Sigma(I) := \{s(X) \mid X \in I\}$ where $s(X)$ denotes the sequence of 
$s$-numbers of $X$, i.e., all the eigenvalues of $|X| = (X^*X)^{1/2}$ repeated according to multiplicity, arranged in decreasing order, and completed by adding infinitely many zeroes if $X$ has finite rank.  
Conversely, for every characteristic set $\Sigma \subset \text{c}_{\text{o}}^*$, 
if $I$ is the ideal generated by $\{diag~\xi \mid \xi \in \Sigma\}$ 
where $diag~\xi$ is the 
diagonal matrix with entries $\xi_1,\xi_2,\dots$ , then we have $\Sigma = \Sigma(I)$.  
We shall also need the sequence space $S(I) := \{\xi \in \text{c}_{\text{o}} \mid |\xi|^* \in \Sigma(I)\}$ where $|\xi|^*$ denotes the monotonization (in decreasing order) of $|\xi|$.
Equivalently, $S(I) = \{\xi \in \text{c}_{\text{o}} \mid diag\,\xi \in I\}$.

More recently Dykema, Figiel, Weiss, and Wodzicki \cite{DFWW} characterized the normal operators in the commutator spaces $[I,B(H)]$ 
in terms of spectral sequences. 
An important feature is that membership in commutator spaces (noncommutative objects) is reduced to certain conditions on associated sequences (commutative ones). 

When $X \in K(H)$, the ideal of compact 
operators on $H$, denote an ordered spectral sequence for $X$ by $\lambda(X) :=~ <\lambda(X)_1,\lambda(X)_2,\dots>$, 
i.e., a sequence of all the eigenvalues of $X$ (if any), 
repeated according to algebraic multiplicity, 
completed by adding 
infinitely many zeroes when only finitely many eigenvalues are nonzero, and arranged in any order so that $|\lambda(X)|$ is nonincreasing.  For any sequence $\lambda =~ <\lambda_n>$, denote by $\lambda_a$ the sequence of its arithmetic (Cesaro) mean, i.e.,
\[
\lambda_a :=~ <\frac{1}{n}\sum_{j=1}^{n}\lambda_j>_{n=1}^{\infty}. 
\]

\noindent A special case of \cite[Theorem 5.6]{DFWW} (see also \cite[Introduction]{DFWW}) is:

\begin{theorem} \label{T:DFWW}
Let $I$ be a proper ideal, let $X \in I$ be a normal operator, and let $\lambda(X)$ be any ordered spectral sequence for $X$.
Then $X \in [I,B(H)]$ if and only if $\lambda(X)_a \in S(I)$ 
if and only if $|\lambda(X)_a| \leq \xi$ for some $\xi \in \Sigma(I)$.
\end{theorem}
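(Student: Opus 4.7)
The plan is to reduce to the diagonal case via the spectral theorem and invoke \cite[Theorem 5.6]{DFWW} for the core commutator characterization, treating the equivalence of the two sequence conditions (b) and (c) as essentially a reformulation.

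First I would apply the spectral theorem for compact normal operators to write $X = UDU^{*}$ with $U$ unitary and $D := diag\,\lambda(X)$, the diagonal entries being the chosen ordered spectral sequence of $X$ (padded with zeros if $\ker X \neq 0$). Because $[I,B(H)]$ is invariant under unitary conjugation, $X \in [I,B(H)] \Leftrightarrow D \in [I,B(H)]$, and $\lambda(X)$ remains an ordered spectral sequence for $D$. This reduces (a) $\Leftrightarrow$ (b) and (a) $\Leftrightarrow$ (c) to the same statements for the diagonal operator $D$.

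For the diagonal operator $D$, these are precisely the assertions of \cite[Theorem 5.6]{DFWW} specialized to the diagonal case, where the ordered spectral sequence of $D$ is literally the diagonal $\lambda$. The necessity $D \in [I,B(H)] \Rightarrow \lambda_a \in S(I)$ follows from the Cesaro-mean commutator estimates of Kalton--DFWW type controlling partial sums of spectral sequences; the sufficiency is the more delicate construction of $diag\,\lambda$ as an explicit finite sum of commutators $[A_k, B_k]$ with $A_k \in I$ and $B_k \in B(H)$, built from weighted shifts and diagonals whose entries are regulated by $\lambda_a$. I would import this theorem as a black box; its underlying machinery is both the deepest ingredient and the main obstacle.

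The equivalence of the two sequence conditions (b) and (c) is then elementary. For (c) $\Rightarrow$ (b): if $|\lambda(X)_a| \leq \xi$ pointwise with $\xi \in \Sigma(I) \subset c_{0}^{*}$, then the decreasing rearrangement $|\lambda(X)_a|^{*}$ also satisfies $|\lambda(X)_a|^{*} \leq \xi$ pointwise (otherwise some $n$ would admit at least $n$ indices $k$ with $|\lambda(X)_a|_{k} > \xi_n \geq \xi_k$, forcing $\xi_k > \xi_n$ and hence $k < n$ for each such $k$, which is impossible), so $|\lambda(X)_a|^{*} \in \Sigma(I)$ by hereditariness and therefore $\lambda(X)_a \in S(I)$. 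The converse (b) $\Rightarrow$ (c) then follows by transitivity, since DFWW already yields both (a) $\Leftrightarrow$ (b) and (a) $\Leftrightarrow$ (c).
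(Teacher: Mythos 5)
Your proposal is essentially what the paper itself does: Theorem \ref{T:DFWW} is presented there purely as a special case of \cite[Theorem 5.6]{DFWW}, with no independent proof, so importing that result as a black box is exactly the intended justification, and your diagonal reduction plus the rearrangement argument showing $|\lambda(X)_a|\leq\xi$ implies $|\lambda(X)_a|^*\leq\xi$ (hence $\lambda(X)_a\in S(I)$ by hereditariness) are correct. In short, this is the same approach as the paper, with some harmless extra bookkeeping.
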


\noindent
In fact, the conclusion holds under the less restrictive condition that $\lambda(X)$ is ordered so that $|\lambda(X)| \leq \eta$ for some $\eta \in \Sigma(I)$ 
(see \cite[Theorem 5.6]{DFWW}).

Arithmetic means first entered the analysis of $[\mathscr L_1,B(H)]$ for a special case in \cite{gW75} and \cite{gW80} and for 
its full characterization in \cite{nK89}.  
As the main result in \cite{DFWW} (Theorem 5.6) conclusively shows, arithmetic means are essential for the study of traces and commutator spaces in operator ideals. 
\cite{DFWW} also initiated a systematic study of ideals derived via arithmetic mean operations (am ideals for short). For the reader's convenience we list the definitions and first properties from \cite[Sections 2.8 and 4.3]{DFWW}.

If $I$ is an ideal, then the arithmetic mean ideals $_aI$ and $I_a$, called respectively the \emph{pre-arithmetic mean} and \emph{arithmetic mean} of $I$, are the ideals with characteristic sets
\[
\Sigma(_aI) := \{\xi \in \text{c}_{\text{o}}^* \mid \xi_a \in \Sigma(I)\} 
\]
\[
\Sigma(I_a) := \{\xi \in \text{c}_{\text{o}}^* \mid \xi = O(\eta_a)~\text{for some}~ \eta \in \Sigma(I)\}.
\]
The \emph{arithmetic mean-closure} $I^-$ and the \emph{arithmetic mean-interior} $I^o$ of an ideal 
(am-closure and am-interior for short) are defined as 
\[
I^- :=~ _a(I_a) \qquad \text{and} \qquad I^o := (_aI)_a
\]
and for any ideal $I$, the following \emph{5-chain of inclusions} holds: 
\[
_aI \subset I^o \subset I \subset I^- \subset I_a.
\]

A restriction of Theorem \ref{T:DFWW} to positive operators can be reformulated in terms of pre-arithmetic means as:
\[
[I,B(H)]^+ = (_aI)^+
\]
where $(_aI)^+$ denotes the cone of positive operators in $_aI$.
As a consequence, $[I,B(H)]^+$ is always hereditary, i.e., solid, $_aI \subset [I,B(H)] \subset I$, and $_aI$ and $I$ are, 
respectively, the largest ideal contained in $[I,B(H)]$, and
the smallest ideal containing $[I,B(H)]$ (for the latter see Remark \ref{R: X,|X|,X oplus -X,(X)}(c)). 
Also, $I=[I,B(H)]$ if and only if $I=~_aI$.  
An ideal with the latter property is called \emph{arithmetically mean stable} (am-stable for short) and it is easy to see, using the 5-chain mentioned above, that a necessary and sufficient condition for am-stability is that $I=I_a$.  
Am-stability for many classical ideals and powers of ideals was studied extensively in \cite[5.13-5.27]{DFWW}. 

For $\xi \in \text{c}_{\text{o}}^*$, denote by $(\xi)$ the principal ideal generated by $diag~\xi$.  
Notice that if $\xi,\eta \in \text{c}_{\text{o}}^*$, then $(\xi) \subset (\eta)$ if and only if $\xi = O(D_m\eta)$ for some $m \in \mathbb N$; 
so $(\xi) = (\eta)$ if and only if both $\xi = O(D_m\eta)$ and $\eta = O(D_k\xi)$ hold for some $m,k \in \mathbb N$.  
Thus $(\xi) = (\eta)$ implies $\xi \asymp \eta$ (i.e., $\xi = O(\eta)$ and $\eta = O(\xi)$) 
if and only if $\xi$ (and hence $\eta$) satisfies 
the $\Delta_{1/2}$-condition,
i.e., $\xi \asymp D_2\xi$ (which is equivalent to $\xi \asymp D_m\xi$ for all $m\in \mathbb N$).  
In this context recall the well-known $\Delta_2$-condition for nondecreasing sequences $\underset{n}{\sup}\frac{g_{2n}}{g_n}<\infty$,
i.e., $g \asymp~ D_{1/2}\,g$ where $(D_{1/m}g)_n := g_{mn}$. 
 
The arithmetic mean $\xi_a$ of a sequence $\xi \in \text{c}_{\text{o}}^*$ always satisfies the elementary inequality $D_2\xi_a \leq 2\xi_a$ and hence also the
$\Delta_{1/2}$-condition. 
From this  
it follows easily that $(\xi_a)=(\xi)_a$ and hence that the principal ideal $(\xi)$ is am-stable if and only if $\xi \asymp \xi_a$, i.e., 
$\xi$ is regular (cf. \cite[p.143 (14.12)]{GK69}). 
The notion of regularity plays a crucial role in Varga's study of positive traces on principal ideals \cite{jV89}.

Of special importance in \cite{DFWW} and in this paper is the principal ideal $(\omega)$, where $\omega$ denotes the harmonic sequence $<\frac{1}{n}>$.  
Elementary computations show that $F_a = (\mathscr L_1)_a = (\omega)$ and that $_a(\omega) = \mathscr L_1$. 
Hence $_aI \ne \{0\}$ if and only if $\omega \in \Sigma(I)$. 
An immediate but important consequence of Theorem \ref{T:DFWW} which will be used often throughout this paper is that 
$\omega \in \Sigma(I)$ if and only if $\mathscr L_1 \subset [I,B(H)]$ 
if and only if $F \subset [I,B(H)]$.

\section{\leftline {\bf Soft interior and soft cover of ideals}}

As mentioned in the Introduction, Theorem \ref{T: BASICS: seJ notin F+[I,B(H)] implies uncountable dimension-(J+[I,B(H)])/(F+[I,B(H)])}, 
which is one of our main results, is formulated in terms of the notion of the soft interior of an ideal.

\begin{definition}\label{D: se,sc}
Given an ideal $I$, the soft interior of $I$ is the ideal $se I := IK(H)$ with characteristic set 
\[
\Sigma(seI) := \{\, \xi \in \text{c}_{\text{o}}^* \mid \xi \leq \alpha \eta~\mbox{for some}~\alpha \in \text{c}_{\text{o}}^*,~ \eta \in \Sigma(I) \}.
\] 
The soft cover of $I$ is the ideal $sc I$ with characteristic set 
\[
\Sigma(sc I) := \{\, \xi \in \text{c}_{\text{o}}^* \mid \alpha \xi \in \Sigma(I) \mbox{ for all } \alpha \in \text{c}_{\text{o}}^* \}.
\]

An ideal $I$ is called soft-edged if $se I = I$ and it is called soft-complemented if $sc I = I$.
A pair of ideals $I \subset J$ is called a soft pair if $I=se J$ and $sc I = J$.
\end{definition}

It is immediate to verify that the sets $\Sigma(se I)$ and $\Sigma(sc I)$ are indeed characteristic sets and that 
in the notations of \cite[Section 2.8]{DFWW}, $sc I := I/K(H)$.
Notice that $se I$ is the largest soft-edged ideal contained in $I$ and $sc I$ is the smallest soft-complemented ideal containing $I$.
Also needed in this paper and easy to show is that, for every ideal $I$, 
$sc~seI=scI$, $se~scI=seI$, $seI \subset I \subset scI$, and $seI \subset scI$ is a soft pair. (cf. \cite{vKgW02},\cite{vKgW07-soft}).

\begin{remark}\label{R:soft terminology}
This terminology is motivated by the fact that $I$ is soft-edged if and only if for every 
$\eta \in \Sigma(I)$ there is some $\xi \in \Sigma(I)$ such that \ $\eta = o(\xi)$. 
Similarly, $I$ is soft-complemented if and only if, for every 
$\xi \in \text{c}_{\text{o}}^* \setminus \Sigma(I)$, 
there is some $\eta \in \text{c}_{\text{o}}^* \setminus \Sigma(I)$ such that $\eta = o(\xi)$.
\end{remark}

Soft-edged and soft-complemented ideals and soft pairs are common 
among the classical operator ideals, that is, ideals $I$ whose $S(I)$-sequence spaces are classical sequence spaces.
In \cite{vKgW07-soft} we show that the following are soft-complemented: 
countably generated ideals, the normed ideals $\mathfrak S_{\phi}$ induced by a symmetric norming function $\phi$, 
Orlicz ideals $\mathscr L_M$, 
Lorentz ideals generated by a nondecreasing $\Delta_2$-function and, more generally, 
ideals whose characteristic set is a quotient of the characteristic set of a soft-complemented ideal by an arbitrary set of sequences 
$X \subset [0,\infty)^{\mathbb Z^+}$
and hence, in particular, K\"othe duals $\mathscr L_1/X$ and quotients $I/J$ of a soft-complemented ideal 
by an arbitrary ideal (see \cite[Section 2.8]{DFWW} for a discussion on quotients). 
Also the following are soft-edged: 
the ideals $\mathfrak S_{\phi}^{\mbox{(o)}}$ (the closure of $F$ under the norm of $\mathfrak S_{\phi}$), 
small Orlicz ideals $\mathscr L_M^{\mbox{(o)}}$, and Lorentz ideals generated by a nondecreasing $\Delta_2$-function. 
Moreover, $\mathfrak S_{\phi}^{\mbox{(o)}} \subset \mathfrak S_{\phi}$ and 
$\mathscr L_M^{\mbox{(o)}} \subset \mathscr L_M$ 
are natural examples of soft pairs. 
(See \cite{DFWW} for a convenient reference for these classical ideals.) 

The condition $seI \not \subset F+[I,B(H)]$ in  
Theorem \ref{T: BASICS: seJ notin F+[I,B(H)] implies uncountable dimension-(J+[I,B(H)])/(F+[I,B(H)])}
will need to be reformulated in terms of arithmetic means and arithmetic means at infinity.
The first step is established by the following commutation relations between the arithmetic and pre-arithmetic mean ideal operations and the soft interior and soft complement operations.

\begin{lemma}\label{L:se,sc,pre-am,am}
Let $I$ be an ideal. Then
\item[(i)] $sc(_aI) \subset~ _a(scI)$
\item[(i$'$)] $sc(_aI) =~ _a(scI)$ if and only if 
$\omega \notin \Sigma(scI) \setminus \Sigma(I)$.
\item[(ii)] $se(I_a) \subset (seI)_a$
\item[(ii$'$)] $se(I_a) = (seI)_a$ if and only if 
either $I \not\subset \mathscr L_1$ or $I = \{0\}$.
\end{lemma}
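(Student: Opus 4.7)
The plan is to handle (i), (ii) with one elementary averaging inequality, and (i$'$), (ii$'$) with a complementary identity, falling back on $\ell^1$-bookkeeping to pin down the boundary cases.

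The key inequality is that for $\beta, \xi \in \text{c}_{\text{o}}^*$, monotonicity of $\beta$ gives
\[
\beta_n \cdot \frac{1}{n}\sum_{k=1}^n \xi_k \le \frac{1}{n}\sum_{k=1}^n \beta_k\xi_k,
\]
that is $\beta\xi_a \le (\beta\xi)_a$. For (i), let $\xi \in \Sigma(sc(_aI))$, so that $(\beta\xi)_a \in \Sigma(I)$ for every $\beta \in \text{c}_{\text{o}}^*$; hereditariness of $\Sigma(I)$ then yields $\beta\xi_a \in \Sigma(I)$, hence $\xi_a \in \Sigma(scI)$, i.e., $\xi \in \Sigma(_a(scI))$. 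For (ii), any $\xi \in \Sigma(se(I_a))$ satisfies $\xi \le C\beta\tau_a$ with $\beta \in \text{c}_{\text{o}}^*$, $\tau \in \Sigma(I)$, $C > 0$; then $\xi \le C(\beta\tau)_a$ and $\beta\tau \in \Sigma(seI)$ exhibit $\xi \in \Sigma((seI)_a)$.

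The key identity for the equalities is that when $\sum_k \xi_k = \infty$, setting
\[
\alpha_n := \frac{\sum_{k=1}^n \beta_k\xi_k}{\sum_{k=1}^n \xi_k}
\]
gives $(\beta\xi)_a = \alpha\xi_a$ by construction; moreover $\alpha$ is nonincreasing because $\beta$ is, and $\alpha_n \to 0$ by a Stolz--Cesaro argument from $\beta_n \to 0$ and $\sum \xi_k = \infty$, so $\alpha \in \text{c}_{\text{o}}^*$. For the forward direction of (ii$'$), the case $I = \{0\}$ is trivial; if $I \not\subset \mathscr L_1$ and $\xi \le C(\beta\tau)_a \in \Sigma((seI)_a)$, then replacing $\tau$ by the decreasing rearrangement of $\tau + \tau'$ for any non-summable $\tau' \in \Sigma(I)$ (still in $\Sigma(I)$), we may assume $\tau \notin \mathscr L_1$; then $(\beta\tau)_a = \alpha\tau_a$ with $\alpha \in \text{c}_{\text{o}}^*$, and $\eta := \tau_a \in \Sigma(I_a)$ exhibits $\xi \le C\alpha\eta \in \Sigma(se(I_a))$. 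For the forward direction of (i$'$), given $\xi \in \Sigma(_a(scI))$ and $\beta \in \text{c}_{\text{o}}^*$: if $\xi \notin \mathscr L_1$ the identity gives $(\beta\xi)_a = \alpha\xi_a \in \Sigma(I)$ immediately from $\xi_a \in \Sigma(scI)$; if $\xi \in \mathscr L_1 \setminus \{0\}$, then $n\xi_{a,n} \to \|\xi\|_1 > 0$ shows $\xi_a \asymp \omega$, so $\omega \in \Sigma(scI)$, and the hypothesis $\omega \notin \Sigma(scI) \setminus \Sigma(I)$ forces $\omega \in \Sigma(I)$, whence $(\beta\xi)_a \le \beta_1\xi_a = O(\omega) \in \Sigma(I)$.

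For the converse of (i$'$), if $\omega \in \Sigma(scI) \setminus \Sigma(I)$, the choice $\xi_n = 1/n^2$ with $\beta = \omega$ satisfies $\xi_a \asymp \omega \in \Sigma(scI)$ yet $(\beta\xi)_a \asymp \omega \notin \Sigma(I)$, yielding $\xi \in \Sigma(_a(scI)) \setminus \Sigma(sc(_aI))$. For the converse of (ii$'$), assume $\{0\} \ne I \subset \mathscr L_1$; pick $0 \ne \tau \in \Sigma(I)$ and set $\sigma = \omega\tau \in \Sigma(seI) \cap \mathscr L_1$. Then $\sigma_a \asymp \omega$ lies in $\Sigma((seI)_a)$, while $I \subset \mathscr L_1$ forces $I_a \subset (\omega)$ and thus $se(I_a) \subset se((\omega)) = \{\zeta \in \text{c}_{\text{o}}^* : \zeta = o(\omega)\}$, which does not contain $\sigma_a$. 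The main technical hurdle is keeping the auxiliary sequence $\alpha$ both decreasing and vanishing: the decreasing part is automatic from monotonicity of $\beta$, but the vanishing genuinely requires $\sum \xi_k = \infty$, which is precisely why the $\mathscr L_1$ case in (i$'$) must be peeled off via the $\omega$-hypothesis and why (ii$'$) demands $I \not\subset \mathscr L_1$.
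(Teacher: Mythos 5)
Your proof is correct and follows essentially the same strategy as the paper's: the pointwise inequality $\beta\xi_a \leq (\beta\xi)_a$ gives (i) and (ii), and the ratio identity $(\beta\xi)_a = \widetilde\alpha\,\xi_a$ with $\widetilde\alpha \in \text{c}_{\text{o}}^*$ for nonsummable $\xi$ (Stolz--Ces\`aro) drives the reverse inclusions in (i$'$) and (ii$'$), with the summable boundary peeled off via $\omega$. The only cosmetic difference is in the converse of (i$'$), where you exhibit an explicit witness $\xi=\omega^2$, $\beta=\omega$, whereas the paper simply observes that $\omega\in\Sigma(scI)\setminus\Sigma(I)$ forces $_aI=\{0\}$ and hence $sc(_aI)=\{0\}\subsetneq{}_a(scI)$; both routes are short and valid.
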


\begin{proof}
\item[(i)] For $\xi \in \Sigma(sc(_aI))$ and all $\alpha \in \text{c}_{\text{o}}^*$, 
by the definition of soft complement, $\alpha\xi \in \Sigma(_aI)$, that is, $(\alpha \xi)_a \in \Sigma(I)$. 
But clearly $\alpha\xi_a \leq (\alpha\xi)_a$ and hence $\alpha\xi_a \in \Sigma(I)$. 
Thus $\xi_a \in \Sigma(scI)$ and $\xi \in \Sigma(_a(scI))$.

\item[(i$'$)]
Recall from the end of Section 2 that for any ideal $J$, $_aJ = \{0\}$ if and only if $\omega \notin \Sigma(J)$. 
Consider separately the three cases: 
$\omega \notin \Sigma(scI)$, $\omega \in \Sigma(scI) \setminus \Sigma(I)$ and $\omega \in \Sigma(I)$.
If $\omega \notin \Sigma(scI)$, then $_a(scI) = \{0\}$ and equality holds. 
If $\omega \in \Sigma(scI) \setminus \Sigma(I)$, then $_a(scI) \ne \{0\}$ 
but $_aI = \{0\}$, so $sc(_aI) = \{0\}$ and equality fails. 
Finally assume that $\omega \in \Sigma(I)$ and let 
$\xi \in \Sigma(_a(scI))$ and $\alpha \in \text{c}_{\text{o}}^*$.
In case $\xi \in \ell^1$ then $\alpha\xi \in \ell^1$, hence $(\alpha\xi)_a = O(\omega)$, 
$\alpha\xi \in \Sigma(_aI)$ and thus $\xi \in \Sigma(sc(_aI))$, so equality holds.
In case $\xi \notin \ell^1$, it is easy to verify that  
$\widetilde \alpha_n := (\frac{(\alpha\xi)_a}{\xi_a})_n = \frac{\sum_1^n \alpha_j\xi_j}{\sum_1^n \xi_j}  \downarrow 0$ 
and hence $(\alpha\xi)_a = \widetilde \alpha \xi_a \in \Sigma(I)$. 
Thus $\alpha \xi \in \Sigma(_aI)$ and hence 
$\xi \in \Sigma(sc(_aI))$ and equality holds.

\item[(ii)] If $\xi \in \Sigma(se(I_a))$ 
then $\xi \leq \alpha \eta_a$ for some $\eta \in \Sigma(I)$ and $\alpha \in \text{c}_{\text{o}}^*$. 
Then from the inequality in the proof of (i), $\xi \leq (\alpha \eta)_a \in \Sigma((seI)_a)$.

\item[(ii$'$)] Consider separately the three cases: 
$I = \{0\}$, $\{0\} \ne I \subset \mathscr L_1$ and $I \not\subset \mathscr L_1$.
In the case $I = \{0\}$ the equality is trivial, and if $\{0\} \ne I \subset \mathscr L_1$, 
it fails trivially since $(seI)_a = I_a = (\omega)$ 
(recalling that $F_a = (\mathscr L_1)_a = (\omega)$) 
and since $(\omega)$ is not soft-edged. 
In the case that $I \not\subset \mathscr L_1$,
for each 
$\xi \in \Sigma((seI)_a)$, 
$\xi \leq \rho_a$ for some $\rho \in \Sigma(seI)$, i.e., $\rho \leq \alpha\eta$ for some $\eta \in \Sigma(I)$ and $\alpha \in \text{c}_{\text{o}}^*$. 
By adding to $\eta$ if necessary an element of $\Sigma(I) \setminus \ell^1$, one can insure that $\eta \notin \ell^1$, 
in which case, from the proof of (i$'$), again there is an $\widetilde \alpha \in \text{c}_{\text{o}}^*$ for which 
$(\alpha\eta)_a = \widetilde \alpha \eta_a$. 
But then $\xi \leq \widetilde \alpha \eta_a \in \Sigma((se(I_a))$. By (ii) equality holds.
\end{proof}
\pagebreak
\begin{proposition}\label{P:TFAE-seI am-stable, scI am-stable, seI in Ipre-a, Ia in scI} 
Let $I$ be an ideal. Then the following are equivalent.
\item[(i)] $seI$ is am-stable
\item[(ii)] $scI$ is am-stable
\item[(iii)] $seI \subset ~_aI$
\item[(iii$'$)] $seI  \subset [I, B(H)]$
\item[(iv)] $scI \supset I_a$
\end{proposition}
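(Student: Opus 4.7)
The plan is to close the cycle
\[
\text{(i)} \Rightarrow \text{(iii)} \Rightarrow \text{(iv)} \Rightarrow \text{(ii)} \Rightarrow \text{(i)}
\]
and treat (iii) $\Leftrightarrow$ (iii$'$) as a side equivalence. Several steps are essentially free: (i) $\Rightarrow$ (iii) is monotonicity of pre-am applied to $seI = {}_a(seI)$; (ii) $\Rightarrow$ (iv) is the chain $I_a \subset (scI)_a = scI$; and (iii) $\Leftrightarrow$ (iii$'$) uses the fact (recalled at the end of Section 2) that ${}_aI$ is the largest ideal contained in $[I,B(H)]$, so $seI \subset [I,B(H)]$ forces $seI \subset {}_aI$, while conversely ${}_aI \subset [I,B(H)]$ always.

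The two central implications (iii) $\Rightarrow$ (iv) and (iv) $\Rightarrow$ (ii) both exploit the elementary inequality $\alpha\xi_a \leq (\alpha\xi)_a$ for $\alpha,\xi \in \text{c}_{\text{o}}^*$ that already appears in the proof of Lemma \ref{L:se,sc,pre-am,am}(i). For (iii) $\Rightarrow$ (iv): given $\xi \leq C\eta_a$ with $\eta \in \Sigma(I)$ and $\alpha \in \text{c}_{\text{o}}^*$, one has $\alpha\xi \leq C\alpha\eta_a \leq C(\alpha\eta)_a$; since $\alpha\eta \in \Sigma(seI)$, hypothesis (iii) places $(\alpha\eta)_a$ in $\Sigma(I)$, so by hereditariness $\alpha\xi \in \Sigma(I)$, and hence $\xi \in \Sigma(scI)$. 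For (iv) $\Rightarrow$ (ii): given $\xi \leq \rho_a$ with $\rho \in \Sigma(scI)$ and $\beta \in \text{c}_{\text{o}}^*$, split $\beta = \sqrt{\beta}\cdot\sqrt{\beta}$ and bound
\[
\beta\xi \,\leq\, \sqrt{\beta}\bigl(\sqrt{\beta}\,\rho_a\bigr) \,\leq\, \sqrt{\beta}\bigl(\sqrt{\beta}\,\rho\bigr)_a;
\]
since $\sqrt{\beta}\,\rho \in \Sigma(I)$, (iv) puts $(\sqrt{\beta}\,\rho)_a \in \Sigma(scI)$, and the remaining factor $\sqrt{\beta} \in \text{c}_{\text{o}}^*$ then places $\beta\xi \in \Sigma(I)$.

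The main obstacle is the closing implication (ii) $\Rightarrow$ (i). I would first rule out the case $I \subset \mathscr{L}_1$ with $I \ne \{0\}$: since $(\omega) = F_a \subset I_a$, an am-stable $scI$ would satisfy $\omega \in \Sigma(scI)$, i.e.\ $\alpha\omega \in \Sigma(I) \subset \ell^1$ for every $\alpha \in \text{c}_{\text{o}}^*$, which is contradicted by any slowly decaying $\alpha$ such as $\alpha_n = 1/\log(n+1)$. So one may assume $I \not\subset \mathscr{L}_1$, and any $\eta \in \Sigma(I)$ can be augmented (by adding a non-$\ell^1$ element of $\Sigma(I)$) to lie in $\Sigma(I) \setminus \ell^1$. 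Then for $\xi \in \Sigma((seI)_a)$ with $\xi \leq \rho_a$ and $\rho \leq \alpha\eta$, invoke the identity $(\alpha\eta)_a = \widetilde\alpha\,\eta_a$ with $\widetilde\alpha \in \text{c}_{\text{o}}^*$ from the proof of Lemma \ref{L:se,sc,pre-am,am}(i$'$) (valid precisely because $\eta \notin \ell^1$), split again $\widetilde\alpha = \sqrt{\widetilde\alpha}\cdot\sqrt{\widetilde\alpha}$, and use the equivalence (ii) $\Leftrightarrow$ (iv) already obtained (so $\eta_a \in \Sigma(I_a) \subset \Sigma(scI)$) to write
\[
\xi \,\leq\, \sqrt{\widetilde\alpha}\bigl(\sqrt{\widetilde\alpha}\,\eta_a\bigr) \quad \text{with} \quad \sqrt{\widetilde\alpha}\,\eta_a \in \Sigma(I),
\]
exhibiting $\xi \in \Sigma(seI)$. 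The subtlety is exactly this $\ell^1$ reduction: without it, the identity from Lemma \ref{L:se,sc,pre-am,am}(i$'$) fails, $(\alpha\eta)_a$ collapses to the order of $\omega$, and hereditariness alone no longer suffices to push $\xi$ back inside the soft-edged ideal $seI$.
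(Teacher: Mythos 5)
Your proof is correct, and it follows a genuinely different chain than the paper's. The paper closes the cycle as (i) $\Rightarrow$ (iii) $\Rightarrow$ (ii) $\Rightarrow$ (iv) $\Rightarrow$ (i), and the two nontrivial links are dispatched by treating Lemma~\ref{L:se,sc,pre-am,am}(i) and (ii$'$) as black boxes: (iii) $\Rightarrow$ (ii) is the chain $scI = sc\,seI \subset sc({}_aI) \subset {}_a(scI) \subset scI$, and (iv) $\Rightarrow$ (i) is $(seI)_a = se(I_a) \subset se\,scI = seI$. You instead go (i) $\Rightarrow$ (iii) $\Rightarrow$ (iv) $\Rightarrow$ (ii) $\Rightarrow$ (i), never invoking those commutation relations at the ideal level; you only borrow two sequence-level facts from the \emph{proof} of the lemma, namely the elementary inequality $\alpha\xi_a \leq (\alpha\xi)_a$ and the identity $(\alpha\eta)_a = \widetilde\alpha\,\eta_a$ for $\eta \notin \ell^1$, and you introduce a $\sqrt{\beta}$-splitting device not used in the paper: when confronted with $\beta$ and a single available application of the hypothesis, you write $\beta = \sqrt{\beta}\cdot\sqrt{\beta}$, feed one factor into the $sc$ definition (or $se$ absorption), and keep the other outside to return to $\Sigma(I)$ at the end. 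This makes (iv) $\Rightarrow$ (ii) and (ii) $\Rightarrow$ (i) self-contained at the sequence level, whereas the paper's version of those steps is shorter but leans on having Lemma~\ref{L:se,sc,pre-am,am} in hand (which the paper needs elsewhere anyway, so the cost is amortized). Both treatments reduce to $I \not\subset \mathscr L_1$ before the hard step; your contradiction via $\alpha_n = 1/\log(n+1)$ and the paper's observation that all four conditions fail for $\{0\} \ne I \subset \mathscr L_1$ are equivalent in content. One very minor bookkeeping note: in each place you write $\xi \leq \rho_a$ you really have $\xi = O(\rho_a)$, but the constant is harmless and can be absorbed throughout.
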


\begin{proof}
Assume first that $I \not\subset \mathscr L_1$. 
\item[(i) $\Rightarrow$ (iii)] Since $seI \subset I$ and the pre-arithmetic mean is inclusion 
preserving, \linebreak
$seI = ~_a(seI) \subset~_aI$. 
\item[(iii) $\Leftrightarrow$ (iii$'$)] Obvious since $[I, B(H)]^+ = ~_aI^+$ by Theorem \ref{T:DFWW}.
\item[(iii) $\Rightarrow$ (ii)] 
Condition (ii) is immediate from the chain of relations 
\begin{center}$scI = sc~seI \subset sc(_aI) \subset~ _a(scI) \subset scI$\end{center}
which implies equality. 
For the first equality, recall the paragraph following Definition \ref{D: se,sc}; 
$sc$ being inclusion preserving implies the first inclusion; 
Lemma \ref{L:se,sc,pre-am,am}(i) implies the second inclusion; and the 5-chain of inclusions implies the last inclusion. 
\item[(ii) $\Rightarrow$ (iv)] From $I \subset scI$ it follows that $I_a \subset (scI)_a = scI$. 
\item[(iv) $\Rightarrow$ (i)] In case $I \not\subset \mathscr L_1$,  
$(seI)_a = se(I_a) \subset se~scI = seI \subset (seI)_a$,
where the first equality follows from Lemma \ref{L:se,sc,pre-am,am}(ii$'$), the first inclusion holds since se is inclusion preserving,
the second equality holds for all ideals (recall again the paragraph following Definition \ref{D: se,sc}) and the last inclusion follows from the 5-chain.

In case $I \subset \mathscr L_1$, if $I = \{0\}$ then all four conditions are trivially true and  
if \linebreak
$I \ne \{0\}$ then all four are false. 
Indeed the arithmetic mean of any nonzero ideal, and hence every nonzero am-stable ideal, 
must contain $(\omega)$ while $I \subset \mathscr L_1$ implies that $seI \subset scI \subset \mathscr L_1 \subsetneq (\omega)$, which shows that (i), (ii), and (iv) are false. 
And since $_aI \subset~_a(\mathscr L_1) = \{0\}$ (recall the last paragraph of Section 2) but $seI\ne \{0\}$, (iii) too is false.
\end{proof}

\begin{remark}\label{R: I am-stable implies seI,scI am-stable with false converse}
The $se$ and $sc$ operations preserve am-stability by Lemma \ref{L:se,sc,pre-am,am}(i) and Proposition \ref{P:TFAE-seI am-stable, scI am-stable, seI in Ipre-a, Ia in scI}.
But am-stability of $seI$ (or $scI$) does not imply am-stability of $I$ as shown by the construction in Theorem \ref{T: false converse for T: se or sc inf codim implies}.
\end{remark}

\section{\leftline {\bf Arithmetic mean at infinity}}

Since $\xi_a \asymp \omega$ for every $0 \neq \xi \in (\ell^1)^* := \ell^1 \cap \text{c}_{\text{o}}^*$, 
nonzero ideals $I \subset \mathscr L_1$ all have arithmetic mean $I_a=(\omega)$.
Thus the (Cesaro) arithmetic mean is not adequate for distinguishing between ideals contained in $\mathscr L_1$.  
For such ideals, one needs to employ instead the \emph{arithmetic mean at infinity}
\[
\xi_{a_\infty} :=~ <\frac{1}{n}\sum_{n+1}^{\infty} \xi_j>
\]
(see \cite[Section 2.1 (16)]{DFWW},\cite{nK87},\cite{mW02}).

In this section we develop properties of the am-$\infty$ operation on sequences including a characterization of $\infty$-regular sequences which is dual to the known characterization of regular sequences and we introduce and investigate the am-$\infty$ operations on ideals. 
This will lead us to Proposition \ref{P: TFAE:seI am-stable, scI am-stable, seI in, scI contains}, which we find essential for Section 7.

The following lemma analyzes the relations between the am-$\infty$ operation and the $D_m$ operations on sequences.
Recall that if $j=mn-p$ with $n,p$ integers, $n\geq 1$, and $0 \leq p \leq m-1$, then $(D_m\xi)_j=\xi_n$.

\begin{lemma}\label{L:relations between Dmxi,(Dmxi)ainfty,Dmxiainfty}  
Let $\xi \in (\ell^1)^*$. Then for $m=2,3,\dots$ one has 
\item[(i)] $D_m\xi_{a_\infty} \leq (D_m\xi)_{a_\infty}$
\item[(ii)] $(D_m\xi_{a_\infty})_j \geq ((D_{m-1}\xi)_{a_\infty})_j$ when $j\geq(m-1)(m-2)$
\item[(iii)] $(D_m\xi_{a_\infty})_j \geq \frac{1}{2(m-1)}(D_{m-1}\xi)_j$ when $j\geq2m(m-1)$
\item[(iv)]  $(D_{1/m}\xi)_j := \xi_{mj} \leq \frac{1}{m-1}(\xi_{a_\infty})_j$ for all $j$.
\end{lemma}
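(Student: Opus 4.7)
The plan is to reduce everything to arithmetic identities by exploiting the block structure of $D_m\xi$. Writing $j = mn - p$ with $0 \leq p \leq m-1$, so that $n = \lceil j/m\rceil$, the tail of $D_m\xi$ beyond position $j$ splits into $p$ leftover indices within block $n$ (each with value $\xi_n$) followed by complete blocks of size $m$ with values $\xi_{n+1}, \xi_{n+2}, \dots$. This yields
\[
((D_m\xi)_{a_\infty})_j = \frac{1}{mn-p}\Bigl(p\xi_n + m\sum_{k=n+1}^{\infty}\xi_k\Bigr),
\]
while $(D_m\xi_{a_\infty})_j = \frac{1}{n}\sum_{k=n+1}^{\infty}\xi_k$. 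Setting $S := \sum_{k=n+1}^{\infty}\xi_k$, inequality (i) becomes $(mn-p)S \leq np\xi_n + mnS$, which simplifies to $p(n\xi_n + S) \geq 0$ and is trivial. Part (iv) is equally quick: among the terms of $\sum_{k>j}\xi_k$ defining $(\xi_{a_\infty})_j$, the first $j(m-1)$ terms $\xi_{j+1}, \dots, \xi_{mj}$ each dominate $\xi_{mj}$, so $(\xi_{a_\infty})_j \geq (m-1)\xi_{mj}$.

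For (ii), I would set $n' = \lceil j/(m-1)\rceil$ and $p' = (m-1)n' - j$, apply the above identity with $m$ replaced by $m-1$ to expand $((D_{m-1}\xi)_{a_\infty})_j = \frac{1}{j}(p'\xi_{n'} + (m-1)\sum_{k=n'+1}^{\infty}\xi_k)$, and use monotonicity of $\xi$ to write $\sum_{k=n+1}^{\infty}\xi_k \geq (n'-n)\xi_{n'} + \sum_{k=n'+1}^{\infty}\xi_k$. Cross-multiplying the target inequality by $nj$, collecting, and using the algebraic identities $j - n(m-1) = n - p$ and $j(n'-n) - np' = n'(n-p)$ (both immediate from $j = mn - p = (m-1)n' - p'$), the entire inequality collapses to
\[
(n-p)\Bigl[n'\xi_{n'} + \sum_{k=n'+1}^{\infty}\xi_k\Bigr] \geq 0,
\]
so (ii) reduces to verifying the purely arithmetic condition $n \geq p$, equivalently $(m-1)\lceil j/m\rceil \leq j$.

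The main obstacle is isolating the sharp threshold on $j$ for this arithmetic condition. For $j$ in the block $(k-1)m < j \leq km$ (so $n=k$), the condition fails exactly on the sub-range $(k-1)m < j \leq k(m-1)-1$, which is nonempty precisely when $k \leq m-2$; the largest failing index occurs at $k = m-2$ and equals $(m-1)(m-2)-1$. Hence $j \geq (m-1)(m-2)$ is exactly what is needed to complete (ii).

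For (iii), the idea is to bound $(D_m\xi_{a_\infty})_j = \frac{1}{n}\sum_{k=n+1}^{\infty}\xi_k \geq \frac{n'-n}{n}\xi_{n'} = \frac{n'-n}{n}(D_{m-1}\xi)_j$ by monotonicity (using that $\xi_k \geq \xi_{n'}$ for $n+1\leq k\leq n'$), and then force $\frac{n'-n}{n} \geq \frac{1}{2(m-1)}$. Using the explicit estimates $n' \geq j/(m-1)$ and $n \leq (j+m-1)/m$, this rearranges to the linear condition $j \geq (m-1)(2m-1)$, which is implied by the stated, cleaner hypothesis $j \geq 2m(m-1)$. As in (ii), the only subtlety I anticipate is keeping the ceiling-function bookkeeping sharp enough to recover the advertised thresholds; once that is in place, (iii) follows at once.
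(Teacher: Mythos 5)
Your proposal is correct and follows essentially the same route as the paper: the same parameterization $j = mn - p$, the same block-expansion formula for $((D_m\xi)_{a_\infty})_j$, and monotonicity of $\xi$ drive every part. The only real difference is stylistic, in (ii): the paper sidesteps the algebra by chaining through the aligned index $(m-1)n$, where the inequality from (i) becomes an equality, and then invokes monotonicity of $(D_{m-1}\xi)_{a_\infty}$ together with $j \geq (m-1)n$; you instead expand both sides in full and collapse to $(n-p)\bigl[n'\xi_{n'} + \sum_{k>n'}\xi_k\bigr] \geq 0$, arriving at the same arithmetic threshold $(m-1)(m-2)$.
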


\begin{proof}
\item[(i)]   Let $j=mn-p$ with $n,p$
integers , $n \geq 1$, and $0 \leq p \leq m-1$.  Then 
\begin{align*}
((D_m~\xi)_{a_\infty})_j &= \frac{1}{mn-p} \sum_{mn-p+1}^{\infty} (D_m~\xi)_i 
= \frac{1}{mn-p} (p~\xi_n + m\sum_{n+1}^{\infty} \xi_i) \\
&= \frac{p}{mn-p}~ \xi_n + \frac{mn}{mn-p} (\xi_{a_\infty})_n \\
&= \frac{p}{mn-p}~ \xi_n + \frac{mn}{mn-p} (D_m~\xi_{a_\infty})_j \geq (D_m~\xi_{a_\infty})_j.
\end{align*}

\item[(ii)] Let $j=mn-p$ as above.  Then 
\begin{align*}
(D_m~\xi_{a_\infty})_j = (\xi_{a_\infty})_n = (D_{m-1}\xi_{a_\infty})_{(m-1)n} &= ((D_{m-1}\xi)_{a_\infty})_{(m-1)n} \\
&\geq ((D_{m-1}\xi)_{a_\infty})_j.
\end{align*}
The third equality follows from the proof of (i) for the case $p=0$, and the inequality holds since $(D_{m-1}\xi)_{a_\infty}$ is nonincreasing and 
$j \geq (m-1)(m-2)$ implies 
$j \geq (m-1)n$ by elementary calculation. 
\item[(iii)] Let $2m(m-1) \leq j=mn-p = (m-1)(n+k)-p'$ where 
$0 \leq p \leq m-1$ and $0 \leq p' \leq m-2$, and hence $k \geq 1$.  
Then 
\begin{align*}
(D_m~\xi_{a_\infty})_j &= (\xi_{a_\infty})_n = \frac{1}{n} \sum_{n+1}^{\infty} \xi_j \geq \frac{1}{n} \sum_{n+1}^{n+k} \xi_j \\
&\geq \frac{k}{n}\xi_{n+k} = \frac{k}{n}(D_{m-1}\xi)_j \geq \frac{1}{2(m-1)}(D_{m-1}\xi)_j
\end{align*}
where the latter inequality holds since $mn \geq j\geq2m(m-1)$ and hence
\[
\frac{k}{n} = \frac{1}{m-1}(1+\frac{p'-p}{n}) \geq \frac{1}{m-1}(1-\frac{m-1}{n}) \geq \frac{1}{2(m-1)}.
\]
\item[(iv)] Immediate by the monotonicity of $\xi$ since then
\[
(m - 1)j\xi_{mj} \leq \sum_{i=j+1}^{mj} \xi_i \leq j(\xi_{a_\infty})_j.
\]
\end{proof}
\pagebreak
\begin{remark}\label{R:sharpness for relations between Dmxi,(Dmxi)ainfty,Dmxiainfty}
It is easy to see that the bounds in (i),(ii), and (iv) are sharp.
In lieu of the bound $\frac{1}{2(m-1)}$ in (iii) we can obtain $\frac{1-\epsilon}{m-1}$ for any $\epsilon > 0$, 
but not for $\epsilon = 0$, i.e., $D_m\xi_{a_\infty}$ does not majorize $\frac{1}{m-1}D_{m-1}\xi$, even for $j$ large enough.   
Indeed, for any j, set $\xi_i = 1$ for $1\leq i \leq 2j-1$ and $0$ elsewhere. 
Then 
\begin{center}$(D_2~\xi_{a_\infty})_{2j-1} = (\xi_{a_\infty})_j = \frac{j-1}{j}<1=\xi_{2j-1}$.\end{center}
\end{remark}

\begin{corollary}\label{C:(xi)subset(xi ainfty)}
If $\xi \in (\ell^1)^*$ then $(\xi) \subset (\xi_{a_\infty})$.
\end{corollary}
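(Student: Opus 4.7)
The plan is to derive this as an immediate corollary of Lemma \ref{L:relations between Dmxi,(Dmxi)ainfty,Dmxiainfty}(iii). Recall from Section 2 that for $\xi, \eta \in \text{c}_{\text{o}}^*$ one has $(\xi) \subset (\eta)$ if and only if $\xi = O(D_m\eta)$ for some $m \in \mathbb N$. Hence it suffices to produce a constant $C$ and an integer $m$ such that $\xi_j \leq C\,(D_m \xi_{a_\infty})_j$ for every $j$.

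I would apply Lemma \ref{L:relations between Dmxi,(Dmxi)ainfty,Dmxiainfty}(iii) with $m=2$. Since $D_{m-1}\xi = D_1\xi = \xi$ and $\frac{1}{2(m-1)} = \frac{1}{2}$, this gives
\[
(D_2 \xi_{a_\infty})_j \geq \tfrac{1}{2}\,\xi_j \qquad \text{for all } j \geq 4.
\]
Equivalently $\xi_j \leq 2\,(D_2 \xi_{a_\infty})_j$ for $j \geq 4$. If $\xi = 0$ the containment is trivial; otherwise $\xi_{a_\infty}$ has strictly positive entries, so the finitely many values $j = 1, 2, 3$ can be absorbed by enlarging the constant, yielding $\xi \leq C\, D_2 \xi_{a_\infty}$ for some $C > 0$, i.e., $\xi = O(D_2 \xi_{a_\infty})$. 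By the principal ideal characterization this is precisely $(\xi) \subset (\xi_{a_\infty})$.

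There is no real obstacle here: all the combinatorial work has been done in Lemma \ref{L:relations between Dmxi,(Dmxi)ainfty,Dmxiainfty}(iii), and the corollary is a one-line deduction from it together with the inclusion criterion for principal ideals. (One could alternatively use part (iv) with $m=3$ together with monotonicity of $\xi$ to obtain $\xi = O(D_3 \xi_{a_\infty})$, but the $m=2$ application above is the most direct.)
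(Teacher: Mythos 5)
Your proof is correct and follows the paper's own argument exactly: both apply Lemma \ref{L:relations between Dmxi,(Dmxi)ainfty,Dmxiainfty}(iii) with $m=2$ to obtain $\xi_j \leq 2(D_2\xi_{a_\infty})_j$ for $j \geq 4$ and then conclude $\xi \in \Sigma((\xi_{a_\infty}))$. The extra sentence about absorbing the finitely many initial indices $j=1,2,3$ into the constant is a useful spelling-out of a step the paper leaves implicit (though the parenthetical ``otherwise $\xi_{a_\infty}$ has strictly positive entries'' is not quite right when $\xi\neq 0$ is finitely supported; there one should instead note that $\xi_{a_\infty}\neq 0$ so that $F\subset(\xi_{a_\infty})$, which suffices, and that the case $\xi=\langle c,0,0,\dots\rangle$ is degenerate).
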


\begin{proof}
By Lemma \ref{L:relations between Dmxi,(Dmxi)ainfty,Dmxiainfty} (iii) for $m=2$ we have  
$\xi_j \leq 2(D_2\xi_{a_\infty})_j$ for $j \geq 4$,\, and thus $\xi \in \Sigma((\xi_{a_\infty}))$.
\end{proof}

In contrast to the arithmetic mean case where the sequence $\xi_a$ always satisfies the $\Delta_{1/2}$-condition, 
Example \ref{E:infty regulars/irregulars}(ii) below shows that this is not always true for $\xi_{a_\infty}$. 
Moreover, Example \ref{E:infty regulars/irregulars}(iii) shows that $\xi_{a_\infty}$ may satisfy the $\Delta_{1/2}$-condition while $\xi$ does not.
Corollary \ref{C:(xi) subset (xiainfty)}(ii) provides a necessary and sufficient condition for $\xi_{a_\infty}$ to satisfy the $\Delta_{1/2}$-condition.

\begin{corollary}\label{C:(xi) subset (xiainfty)}
Let $\xi \in (\ell^1)^*$. 
\item[(i)] If $\xi$ satisfies the $\Delta_{1/2}$-condition, so does $\xi_{a_\infty}$.
\item[(ii)] $\xi_{a_\infty}$ satisfies the $\Delta_{1/2}$-condition if and only if $\xi = O(\xi_{a_\infty})$.
\end{corollary}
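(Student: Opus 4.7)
I will prove (ii) first and derive (i) from it, since the $\Delta_{1/2}$-condition on $\xi$ forces $\xi = O(\xi_{a_\infty})$ directly from the definition of the arithmetic mean at infinity. This both avoids a separate argument for (i) and isolates the one genuinely technical step, which is the reverse direction of (ii).

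For (ii) $\Rightarrow$, I invoke Lemma \ref{L:relations between Dmxi,(Dmxi)ainfty,Dmxiainfty}(iii) with $m=2$, which says $(D_2\xi_{a_\infty})_j \ge \tfrac12\xi_j$ for $j \ge 4$. Combined with the hypothesis $D_2\xi_{a_\infty} \le C\xi_{a_\infty}$, this gives $\xi_j \le 2C(\xi_{a_\infty})_j$ for $j \ge 4$, and (assuming the nontrivial case $\xi \ne 0$) the finitely many small indices are absorbed by enlarging the constant, so $\xi = O(\xi_{a_\infty})$. For (ii) $\Leftarrow$, assume $\xi_j \le M(\xi_{a_\infty})_j$ and set $T_n := \sum_{j=n+1}^\infty \xi_j = n(\xi_{a_\infty})_n$. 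Applying the hypothesis at $j=n+1$ gives $\xi_{n+1} \le \tfrac{M}{n+1}T_{n+1}$, so the identity $T_n = T_{n+1} + \xi_{n+1}$ yields the discrete Gronwall-type recurrence
\[
T_n \le \Bigl(1 + \tfrac{M}{n+1}\Bigr) T_{n+1}.
\]
Iterating from $n$ to $2n$ and bounding $\sum_{k=n+1}^{2n}\tfrac{1}{k} \le \log 2$ by integral comparison controls the resulting product uniformly: $\prod_{k=n+1}^{2n}(1 + \tfrac{M}{k}) \le e^{M\log 2} = 2^M$. Hence $T_n \le 2^M T_{2n}$, and dividing by $n$ gives $(\xi_{a_\infty})_n \le 2^{M+1}(\xi_{a_\infty})_{2n}$, which is the $\Delta_{1/2}$-condition for $\xi_{a_\infty}$.

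Part (i) then follows as a corollary of (ii): if $\xi_n \le K\xi_{2n}$, monotonicity gives $(\xi_{a_\infty})_n \ge \tfrac1n\sum_{j=n+1}^{2n}\xi_j \ge \xi_{2n} \ge \xi_n/K$, so $\xi = O(\xi_{a_\infty})$, and (ii) produces the $\Delta_{1/2}$-condition for $\xi_{a_\infty}$. The main obstacle is the telescoping step in (ii) $\Leftarrow$: the key move is to translate the pointwise bound $\xi \le M\xi_{a_\infty}$ into a first-order recurrence for the tail sums $T_n$ rather than trying to compare $(\xi_{a_\infty})_n$ to $(\xi_{a_\infty})_{2n}$ directly, since only the tail-sum viewpoint makes the critical cancellation $\sum_{n+1}^{2n}\tfrac1k \le \log 2$ visible and independent of $n$.
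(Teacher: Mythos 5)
Your proof is correct and the core technical step — the $\Leftarrow$ direction of (ii), via the telescoping identity for the tail sums $T_n = n(\xi_{a_\infty})_n$ and the bound $\prod_{k=n+1}^{2n}(1+\tfrac{M}{k}) \le e^{M\sum_{n+1}^{2n}1/k} \le 2^M$ — is precisely the paper's argument; the $\Rightarrow$ direction also matches, both using Lemma \ref{L:relations between Dmxi,(Dmxi)ainfty,Dmxiainfty}(iii). The only organizational difference is in part (i): the paper proves it directly from Lemma \ref{L:relations between Dmxi,(Dmxi)ainfty,Dmxiainfty}(i) (the inequality $D_2\xi_{a_\infty} \le (D_2\xi)_{a_\infty}$), whereas you reduce it to (ii) via the one-line monotonicity estimate $(\xi_{a_\infty})_n \ge \xi_{2n} \ge \xi_n/K$; your route trades one application of Lemma 4.1 for a dependence of (i) on (ii) and is equally valid.
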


\begin{proof}
\item[(i)]  If $D_2\xi \leq M\xi$ for some $M>0$, then by Lemma \ref{L:relations between Dmxi,(Dmxi)ainfty,Dmxiainfty}(i), 
$D_2\xi_{a_\infty} \leq (D_2\xi)_{a_\infty} \leq  M\xi_{a_\infty}$.
\item[(ii)] If $\xi_{a_\infty}$ satisfies the $\Delta_{1/2}$-condition then $\xi = O(D_2 \xi_{a_\infty}) = O(\xi_{a_\infty})$ by Lemma \ref{L:relations between Dmxi,(Dmxi)ainfty,Dmxiainfty}(iii). 
Conversely, assume that $0\ne \xi=O(\xi_{a_\infty})$, i.e., 
$\xi_k \leq \frac{M}{k}\sum_{j=k+1}^\infty \xi_j$ for some $M>0$ and all $k$. 
Then $\xi_k > 0$ for all $k$ and 
\begin{align*}
\frac{\sum_{j=n+1}^\infty \xi_j}{\sum_{j=2n+1}^\infty \xi_j} &= \prod_{k=n+1}^{2n} (1+\frac{\xi_k}{\sum_{j=k+1}^\infty \xi_j}) 
\leq \prod_{k=n+1}^{2n} (1+\frac{M}{k}) \\
&= e^{\sum_{k=n+1}^{2n} log\,(1+\frac{M}{k})} \leq e^{M \sum_{k=n+1}^{2n} \frac{1}{k}}  \leq 2^M.
\end{align*}
Hence $\frac{(\xi_{a_\infty})_n}{(\xi_{a_\infty})_{2n}} \leq 2^{M+1}$ for all $n$, i.e., $\xi_{a_\infty}$ satisfies the $\Delta_{1/2}$-condition.
\end{proof}
\begin{example}\label{E:infty regulars/irregulars}
\item[(i)] Let $\xi = \omega^p$ where $p>1$. Then $\xi_{a_\infty} \asymp \xi$ satisfies the $\Delta_{1/2}$-condition.
\item[(ii)]Let $\xi =~ <q^n>$ where $0 < q < 1$. Then $\xi_{a_\infty} = o(\xi)$ and neither sequence satisfies the $\Delta_{1/2}$-condition.
\item[(iii)] Let $n_k$ be an increasing sequence of integers for which $n_k \ge kn_{k-1}$ (with $n_1=1$ and $k \ge 2$), 
let $<\epsilon_k> ~\in \text{c}_{\text{o}}^*$ where $\sum_{k=1}^{\infty} \epsilon_k n_k < \infty$,
and let $\xi_n := \epsilon_k$ for $n_{k-1} < n \leq n_k$.
Then  $\xi: = ~<\xi_n>~ \in (\ell^1)^* $, $\xi$ does not satisfy the $\Delta_{1/2}$-condition and $(\xi) \ne (\xi_{a_\infty})$, 
but $\xi_{a_\infty}$ satisfies the $\Delta_{1/2}$-condition if and only if $\epsilon_kn_k = O(\sum_{j=k+1}^\infty \epsilon_j n_j)$.
\item[(iv)] $\xi = o(\xi_{a_\infty})$ and $\xi$ satisfies the $\Delta_{1/2}$-condition for $\xi$ any of the sequences $\omega/log^p$, $\omega/log\,(log\,log)^p$, $\omega/log\,(log\,log)(log\,log\,log)^p,\dots$ 
($p>1$).
\end{example}

\begin{proof}
The verification of (i), (ii) and (iv) is left to the reader. 
For (iii) let us note that if $\frac{\epsilon_{k-1}}{\epsilon_k} \leq M$ for some constant $M$ and all $k > 1$, 
then $\epsilon_k n_k > \frac{k!}{M^{k-1}}\,\epsilon_1 n_1$ which is impossible because $\epsilon_k n_k$ is summable. 
Thus the ratios $\frac{\epsilon_{k-1}}{\epsilon_k}$ are unbounded and hence $\xi$ does not satisfy the  $\Delta_{1/2}$-condition. 
Moreover, $\xi_{a_\infty} \ne O(D_m\xi)$ for every $m$. 
Indeed for every pair of integers $m,p > 1$, choose $k = m^2p^2$.  
Since $n_k > kn_{k-1} > pn_{k-1} > n_{k-1}$, then
\begin{align*}
(\frac{\xi_{a_\infty}}{D_m\xi})_{mpn_{k-1}} &= \frac{(\xi_{a_\infty})_{mpn_{k-1}}}{\xi_{pn_{k-1}}} 
= \frac{1}{\epsilon_k mpn_{k-1}} \sum_{mpn_{k-1}+1}^\infty \xi_j \\
&\geq \frac{1}{\epsilon_k mpn_{k-1}} \sum_{mpn_{k-1}+1}^{kn_{k-1}} \xi_j = \frac{m^2p^2n_{k-1}-mpn_{k-1}}{mpn_{k-1}} = mp -1.
\end{align*}
Thus $\xi_{a_\infty} \neq O(D_m\xi)$ and hence $\xi_{a_\infty} \notin \Sigma((\xi))$. 
Finally, it is straightforward to verify that the given condition, $\epsilon_k =  O(\frac{1}{n_k} \sum_{j=k+1}^\infty \epsilon_j n_j)$, 
is equivalent to the condition $\xi = O(\xi_{a_\infty})$, 
and hence by Corollary \ref{C:(xi) subset (xiainfty)}(ii), 
is equivalent to $\xi_{a_\infty}$ satisfying the  $\Delta_{1/2}$-condition. 
\end{proof}

Examples (i) and (ii) are sequences regular at infinity while (iii) and (iv) are not (see Definition \ref{D:am-infty stability/regularity} and Theorem \ref{T: Theorem 4.12}). 

An immediate consequence of Lemma \ref{L:relations between Dmxi,(Dmxi)ainfty,Dmxiainfty}(i) and (ii) is that the following definition yields characteristic sets.

\begin{definition}\label{D:ainftyI,Iainfty}
Let $I$ be an ideal. Then $_{a_\infty}I$ when $I\neq\{0\}$ and $I_{a_\infty}$ when $I$ is arbitrary are the ideals with characteristic sets
\[
\Sigma(_{a_\infty}I) := \{\xi \in (\ell^1)^* \mid \xi_{a_\infty} \in \Sigma(I)\}
\]
\[
\Sigma(I_{a_\infty}) := \{\xi \in \text{c}_{\text{o}}^{*} \mid \xi = O(\eta_{a_\infty})  ~\text{for some}~ \eta \in \Sigma(I \cap \mathscr L_1)\}.
\]
\end{definition}
\noindent Notice that 
$_{a_\infty}I \subset \mathscr L_1$ and $I_{a_\infty} = (I \cap \mathscr L_1)_{a_\infty}$ by definition and   
that for all $\xi \in (\ell^1)^*$ one has $\xi_{a_\infty} = o(\omega)$, i.e., $I_{a_\infty} \subset se(\omega)$ for every ideal $I$ 
and therefore $_{a_\infty}I =~ _{a_\infty}(I \cap se(\omega))$.
In particular,  $\mathscr L_1 \subset ~_{a_\infty}(se(\omega))$ and hence  $\mathscr L_1 = ~_{a_\infty}(se(\omega))$.
And like the arithmetic and pre-arithmetic mean, the arithmetic mean and pre-arithmetic mean at $\infty$ are inclusion preserving.

\begin{lemma}\label{L:(xi) subset (xiainfty)}
For $\xi \in \text{c}_{\text{o}}^{*}$, 
\[
(\xi)_{a_\infty} = 
\begin{cases}
(\xi_{a_\infty}) ~\text{if}~ \xi \in \ell^1 \\
se(\omega)  ~\text{if}~ \xi \notin \ell^1. 
\end{cases}
\]
In particular, if an ideal $I \not \subset \mathscr L_1$ then $I_{a_\infty}=se(\omega)$, and moreover $(\mathscr L_1)_{a_\infty} = se(\omega)$.
\end{lemma}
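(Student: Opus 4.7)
The plan is to split the main equality into the two cases $\xi \in \ell^1$ and $\xi \notin \ell^1$, and then deduce the ``in particular'' clauses quickly from them.

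When $\xi \in \ell^1$, the principal ideal $(\xi)$ is already contained in $\mathscr L_1$, so $(\xi) \cap \mathscr L_1 = (\xi)$ and $\xi$ itself serves as a witness in $\Sigma((\xi) \cap \mathscr L_1)$. This immediately yields $\xi_{a_\infty} \in \Sigma((\xi)_{a_\infty})$, giving $(\xi_{a_\infty}) \subset (\xi)_{a_\infty}$ for free. For the reverse inclusion $(\xi)_{a_\infty} \subset (\xi_{a_\infty})$, any $\rho \in \Sigma((\xi)_{a_\infty})$ has the form $\rho = O(\eta_{a_\infty})$ with $\eta = O(D_m \xi)$ for some $m$, hence $\eta_{a_\infty} = O((D_m\xi)_{a_\infty})$, so it suffices to bound $(D_m\xi)_{a_\infty} = O(D_{2m}\xi_{a_\infty})$. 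I would prove this by combining the exact identity
\[
((D_m\xi)_{a_\infty})_j = \frac{p}{mn-p}\,\xi_n + \frac{mn}{mn-p}(D_m\xi_{a_\infty})_j, \qquad j = mn-p,\ 0 \leq p \leq m-1,
\]
extracted in the proof of Lemma \ref{L:relations between Dmxi,(Dmxi)ainfty,Dmxiainfty}(i), with the pointwise bound $\xi_n \leq 2(\xi_{a_\infty})_{\lceil n/2\rceil}$ coming from Lemma \ref{L:relations between Dmxi,(Dmxi)ainfty,Dmxiainfty}(iii) at $m=2$, and with $(\xi_{a_\infty})_{\lceil n/2\rceil} \leq (\xi_{a_\infty})_{\lceil (mn-p)/(2m)\rceil} = (D_{2m}\xi_{a_\infty})_j$ from monotonicity of $\xi_{a_\infty}$ (since $(mn-p)/(2m) \leq n/2$). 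Together these give $((D_m\xi)_{a_\infty})_j \leq 4(D_{2m}\xi_{a_\infty})_j$ for all sufficiently large $j$.

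When $\xi \notin \ell^1$, the inclusion $(\xi)_{a_\infty} \subset se(\omega)$ is a special case of the general inclusion $I_{a_\infty} \subset se(\omega)$ already recorded after Definition \ref{D:ainftyI,Iainfty}. The reverse inclusion is the heart of the proof and requires a construction. First I would verify the explicit identification $\Sigma(se(\omega)) = \{\rho \in \text{c}_\text{o}^* : \rho = o(\omega)\}$, which follows from Definition \ref{D: se,sc} and the elementary fact $\Sigma((\omega)) = \{O(\omega)\}$. Given any such $\rho$, write $\rho_n = \epsilon_n/n$ with $\epsilon \downarrow 0$; I then build $\eta \in (\ell^1)^* \cap \Sigma((\xi))$ with $\sum_{k > n}\eta_k \geq c\,\epsilon_n$ for all $n$ and a fixed $c > 0$ by a block construction: use $\sum \xi_k = \infty$ to choose integers $1 = n_1 < n_2 < \cdots$ with $n_{j+1} > 2n_j$, $\epsilon_{n_{j+1}} \leq 2^{-(j+1)}$, and $s_j := \sum_{n_j < k \leq n_{j+1}}\xi_k > \epsilon_{n_j}$; then set $\eta_k = \alpha_j\,\xi_k$ on the block $(n_j, n_{j+1}]$ with weights $\alpha_j \in (0,1]$ tuned so that $\alpha_j s_j$ contributes the required mass to the tail, while preserving monotonicity of $\eta$ (if necessary by replacing $\alpha_j$ with $\max_{i \geq j}\alpha_i$ and iterating the block choice to absorb the extra mass). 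The resulting $\eta \leq \xi$ lies in $\Sigma((\xi))$, is summable because $\sum_j \alpha_j s_j < \infty$, and has tail sums dominating $c\,\epsilon_n$ by construction.

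The two ``in particular'' assertions then follow readily. If $I \not\subset \mathscr L_1$, pick any $\xi \in \Sigma(I) \setminus \ell^1$; then $(\xi) \subset I$ so Case 2 and monotonicity of $(\cdot)_{a_\infty}$ give $se(\omega) = (\xi)_{a_\infty} \subset I_{a_\infty} \subset se(\omega)$. For $(\mathscr L_1)_{a_\infty} = se(\omega)$ the same block construction (now without the constraint $\eta \leq \xi$, which makes it strictly easier) produces, for each $\rho = o(\omega)$, an $\eta \in (\ell^1)^*$ with $\rho = O(\eta_{a_\infty})$; the reverse inclusion is immediate from $n\,\eta_{a_\infty,n} = \sum_{k>n}\eta_k \to 0$ whenever $\eta \in \ell^1$. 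The main obstacle is the block construction in the $\xi \notin \ell^1$ case: arranging simultaneously that $\eta$ be monotone decreasing, lie in $\Sigma((\xi))$, be $\ell^1$-summable, and have tail sums uniformly dominating $\epsilon_n$ is delicate, because the natural choice of $\alpha_j$ fails to be decreasing along the blocks whenever $\epsilon_n$ does not decay geometrically, forcing the iterative ``absorb the overshoot by extending the next block'' step.
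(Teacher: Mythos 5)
Your overall strategy matches the paper's: split on whether $\xi\in\ell^1$, use the first sub-chain of Lemma \ref{L:relations between Dmxi,(Dmxi)ainfty,Dmxiainfty} for the summable case, and run a block construction of a summable $\eta\leq\xi$ with large tail sums for the nonsummable case.

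For the $\xi\in\ell^1$ case your chain of estimates is correct but roundabout: you reprove a bound of the form $(D_m\xi)_{a_\infty}\leq 4\,D_{2m}\xi_{a_\infty}$ eventually by combining the exact identity from the proof of (i) with part (iii), whereas Lemma \ref{L:relations between Dmxi,(Dmxi)ainfty,Dmxiainfty}(ii), shifted by one index, already gives directly $(D_m\xi)_{a_\infty}\leq D_{m+1}\xi_{a_\infty}$ for $j\geq m(m-1)$. That is exactly what the paper cites, and it is cleaner.

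The genuine gap is in the $\xi\notin\ell^1$ case, and you flag it yourself. Defining $\eta_k=\alpha_j\xi_k$ on the block $(n_j,n_{j+1}]$ forces the weights $\alpha_j$ to be nonincreasing in order for $\eta$ to be monotone across block boundaries, but the natural choice of $\alpha_j$ (tuned so $\alpha_j s_j$ matches the required tail mass) will generally fail monotonicity; replacing $\alpha_j$ by $\max_{i\geq j}\alpha_i$ restores monotonicity only at the price of inflating the masses, which threatens summability and the constraint $\alpha_j\leq1$, and the "iterate the block choice to absorb the overshoot" step is not an argument. The paper avoids this entirely by choosing a different shape for $\eta$: after fixing $m_{k-1}<n_k\leq m_k$ with $n_k\xi_{n_k}\leq2^{-k}$, $\alpha_{n_k}\leq2^{-k}$ and $2^{-(k-1)}\leq\sum_{n_k}^{m_k}\xi_j\leq2^{-(k-2)}$, it defines $\eta_n=\xi_{n_k}$ (a constant) on $(m_{k-1},n_k]$ and $\eta_n=\xi_n$ on $[n_k,m_k]$. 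This is automatically nonincreasing, automatically $\leq\xi$ (so $\eta\in\Sigma((\xi))$ with no ampliation needed), trivially summable because of the explicit block-mass bounds, and the tail sums $\sum_{n+1}^\infty\eta_j\geq\sum_{n_k}^{m_k}\xi_j\geq2^{-(k-1)}\geq\alpha_n$ for $n_{k-1}\leq n<n_k$ are verified in one line. That two-piece definition of $\eta$ — constant on the front sub-block, equal to $\xi$ on the back one — is the key idea your proposal is missing; without it (or an equivalent device) the construction is not complete.

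Your "in particular" deductions are fine; for $(\mathscr L_1)_{a_\infty}=se(\omega)$ you re-run the construction without the constraint $\eta\leq\xi$, whereas the paper gets it for free from the main statement applied to $\xi=\omega$ via $(\mathscr L_1)_{a_\infty}=(\mathscr L_1\cap(\omega))_{a_\infty}=(\omega)_{a_\infty}=se(\omega)$.
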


\begin{proof}
Assume first that $\xi \in \ell^1$. By definition
$\xi_{a_\infty} \in \Sigma((\xi)_{a_\infty})$ so 
$(\xi_{a_\infty}) \subset (\xi)_{a_\infty}$.  \linebreak
For the reverse inclusion, if $\eta \in \Sigma((\xi)_{a_\infty})$ then 
$\eta \leq \zeta_{a_\infty}$ for a summable $\zeta \in \Sigma((\xi))$.
Since 
$\zeta \leq MD_m\xi$ for some $M>0$ and $m \in \mathbb N$, 
by Lemma \ref{L:relations between Dmxi,(Dmxi)ainfty,Dmxiainfty}~(ii), 
\begin{center}$(\zeta_{a_\infty})_j \leq M((D_m\xi)_{a_\infty})_j \leq M(D_{m+1}\xi_{a_\infty})_j$\end{center} 
for $j \geq m(m-1)$.  
Since $D_{m+1}\xi_{a_\infty} \in \Sigma((\xi_{a_\infty}))$, $\eta \in \Sigma((\xi_{a_\infty}))$ 
and so $(\xi)_{a_\infty} \subset (\xi_{a_\infty})$.

Assume now that $\xi \notin \ell^1$.  
It is not hard to show that $\min(\xi, \omega) \notin \ell^1$,  
so by passing if necessary to a sequence $(\ell^1)^* \not\owns \xi' = o(\min(\xi, \omega))$, 
one can assume without loss of generality that $\xi = o(\omega)$.
For each $\zeta \in \Sigma(se(\omega))$, 
by passing if necessary to $\zeta' := \omega ~uni~(\frac{\zeta}{\omega}) \geq \zeta$ where $uni~\gamma$ is the smallest monotone nonincreasing sequence majorizing $\gamma$ and is given by 
$(uni~\gamma)_n := \underset{j \geq n} {\sup} \gamma_j$, 
one can assume without loss of generality that $\zeta = \alpha \omega$ for some $\alpha \in \text{c}_{\text{o}}^{*}$ and that $\alpha_1 \geq 1$. 
To prove that $\zeta \in \Sigma((\xi)_{a_\infty})$, 
set $m_o = 0$ and choose $n_1 \geq1$ so that $n_1\xi_{n_1} \leq \frac{1}{2}$ and $\alpha_{n_1} \leq \frac{1}{2}$.  
Since $\xi$ is not summable, 
choose the first integer $m_1 \geq n_1$ for which $\sum_{j=n_1}^{m_1} \xi_j \geq \alpha_1$, 
and since  
$\xi_{n_1} \leq \frac{1}{2}$ one also has $\sum_{j=n_1}^{m_1} \xi_j \leq \alpha_1 + \frac{1}{2}$. 
Now choose $n_2>m_1$ and $m_2 \geq n_2$ so that $n_2\xi_{n_2} \leq \frac{1}{2^2},~\alpha_{n_2} \leq \frac{1}{2^2}$ 
and $\frac{1}{2} \leq \sum_{j=n_2}^{m_2} \xi_j \leq 1$. 
Iterating obtains the sequences, 
$m_k \geq n_k > m_{k-1}$ so that, 
for $k\geq2$, $n_k\xi_{n_k} \leq \frac{1}{2^k},~\alpha_{n_k} \leq \frac{1}{2^k}$  
and $\frac{1}{2^{k-1}} \leq \sum_{j=n_k}^{m_k} \xi_j \leq \frac{1}{2^{k-2}}$. \linebreak
Define 
$\eta_n = 
\begin{cases} \xi_{n_k}	&\text{for $m_{k-1} < n \leq n_k$} \\
\xi_n	&\text{for $n_k \leq n \leq m_k$}
\end{cases}
$. 
Then $\eta := ~<\eta_n>~ \in \text{c}_{\text{o}}^{*}$ and $\eta \leq \xi$.  
Since 
$
\sum_{j=m_{k-1}+1}^{n_k-1} \eta_j = (n_k-m_{k-1}-1)\xi_{n_k} < n_k\xi_{n_k} \leq \frac{1}{2^k}
$ 
and $\sum_{j=n_k}^{m_k} \eta_j = \sum_{j=n_k}^{m_k} \xi_j \leq \frac{1}{2^{k-2}}$, 
one has $\eta \in \ell^1$.  
Moreover, for $k \geq 2$ and 
$n_{k-1} \leq n < n_k$ one has 
\begin{center}$\sum_{n+1}^{\infty} \eta_j \geq \sum_{n_k}^{m_k} \xi_j \geq \frac{1}{2^{k-1}} \geq \alpha_{n_{k-1}} \geq \alpha_n$; \end{center}
for $1 \leq n < n_1$ one has $\sum_{n+1}^{\infty} \eta_j \geq \sum_{n_1}^{m_1} \xi_j \geq \alpha_1 \geq \alpha_n$. 
Thus $\eta \in (\ell^1)^*$, $\eta \leq \xi$ and 
$\alpha \leq~ <\sum_{n+1}^{\infty} \eta_j>$. 
Therefore $\zeta = \alpha \omega \leq \eta_{a_\infty}$ and hence $\zeta \in \Sigma((\xi)_{a_\infty})$.
Since $\zeta \in \Sigma(se(\omega))$ was arbitrary, $(\xi)_{a_\infty} \supset se(\omega)$. 
But $I_{a_\infty} \subset se(\omega)$ for every ideal $I$, so one has equality. 
Thus $I_{a_\infty}= se(\omega)$ when $I \not\subset \mathscr L_1$.
Moreover, \begin{center}$(\mathscr L_1)_{a_\infty} = (\mathscr L_1 \cap (\omega))_{a_\infty} = (\omega)_{a_\infty}= se(\omega)$. \end{center}
\end{proof}

Notice that to prove directly that $se(\omega) \subset (\mathscr L_1)_{a_\infty}$, 
it would only be necessary to show that for all $\alpha \in \text{c}_{\text{o}}^{*}$, 
$\alpha \leq~ <\sum_{n+1}^{\infty} \eta_j>$ for some $\eta \in (\ell^1)^*$.  
This is 
equivalent to the well-known fact that $\alpha$ has a convex majorant in $\text{c}_{\text{o}}^{*}$ 
(see \cite[p. 203]{nB64}).  
But for the proof that $se(\omega) \subset (\xi)_{a_\infty}$ we needed to prove that the convex majorant, $<\sum_{n+1}^{\infty} \eta_j>$, of $\alpha$ can be chosen so that additionally $\eta \leq \xi$.

Recall from Section 2 (see the paragraph following Theorem \ref{T:DFWW}) that the am-ideals satisfy the 5-chain of inclusions. 
The situation is slightly more complicated for the am-$\infty$ case since 
the inclusion $I \subset I_{a_\infty}$ holds if and only if $I \subset se(\omega)$, as we shall see in the next proposition.
We shall also see there that the 5-chain of inclusions
remains valid for all ideals $I$ contained in $\mathscr L_1$:
\[
_{a_\infty}I \subset (_{a_\infty}I)_{a_\infty} \subset I \subset ~_{a_\infty}(I_{a_\infty}) \subset I_{a_\infty}.
\]
\noindent More generally, 

\begin{proposition}\label{P:aminfty relations}
Let $I\neq\{0\}$ be an ideal. 
\item[(i)] $\{0\} \neq~ _{a_\infty}I \subset (_{a_\infty}I)_{a_\infty} \subset I$
\item[(i$'$)] $I \cap \mathscr L_1 \subset ~_{a_\infty}(I_{a_\infty}) \subset I_{a_\infty}$
\item[(ii)] $_{a_\infty}I =~ _{a_\infty}((_{a_\infty}I)_{a_\infty})$ and the map 
$I \rightarrow (_{a_\infty}I)_{a_\infty}$ is idempotent. 
\item[(ii$'$)] $I_{a_\infty} = (_{a_\infty}(I_{a_\infty}))_{a_\infty}$ and the map 
$I \rightarrow~ _{a_\infty}(I_{a_\infty})$ is idempotent. 

\item[(iii)] If $J$ is an ideal, then $J_{a_\infty} \subset I$ if and only if $J \cap \mathscr L_1 \subset~ _{a_\infty}I$.

\item[(iv)] $I \subset I_{a_\infty}$ if and only if $I \subset se(\omega)$.
\end{proposition}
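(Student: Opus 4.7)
The plan is to prove the six parts in an order that minimizes duplication, establishing first the two foundational facts (iii) and (iv) and deducing the remaining parts from them together with Lemma \ref{L:relations between Dmxi,(Dmxi)ainfty,Dmxiainfty}, Corollary \ref{C:(xi)subset(xi ainfty)}, and especially Lemma \ref{L:(xi) subset (xiainfty)}. Throughout, I will freely use inclusion-preservation of both $_{a_\infty}$ and $(\cdot)_{a_\infty}$, and the definitional identity $I_{a_\infty} = (I \cap \mathscr L_1)_{a_\infty}$ from the remark after Definition \ref{D:ainftyI,Iainfty}.

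First I would dispatch (iii) directly from the definitions. For $(\Leftarrow)$, any $\xi \in \Sigma(J_{a_\infty})$ satisfies $\xi \leq \eta_{a_\infty}$ for some $\eta \in \Sigma(J \cap \mathscr L_1) \subset \Sigma({_{a_\infty}I})$, whence $\eta_{a_\infty} \in \Sigma(I)$ and $\xi \in \Sigma(I)$. For $(\Rightarrow)$, any $\eta \in \Sigma(J \cap \mathscr L_1)$ satisfies $\eta_{a_\infty} \in \Sigma(J_{a_\infty}) \subset \Sigma(I)$, so $\eta \in \Sigma({_{a_\infty}I})$. Next, for (iv), necessity is immediate from $I_{a_\infty} \subset se(\omega)$ (noted after Definition \ref{D:ainftyI,Iainfty}). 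For sufficiency, assume $I \subset se(\omega)$ and take $\xi \in \Sigma(I)$: if $\xi \in \ell^1$, combine $\xi \leq 2 D_2\xi_{a_\infty}$ (from the proof of Corollary \ref{C:(xi)subset(xi ainfty)}) with Lemma \ref{L:relations between Dmxi,(Dmxi)ainfty,Dmxiainfty}(i) to obtain $\xi \leq (2D_2\xi)_{a_\infty}$ with $2D_2\xi \in (\ell^1)^* \cap \Sigma(I)$, so $\xi \in \Sigma(I_{a_\infty})$; if $\xi \notin \ell^1$, Lemma \ref{L:(xi) subset (xiainfty)} gives $(\xi)_{a_\infty} = se(\omega) \ni \xi$, so $\xi \in \Sigma((\xi)_{a_\infty}) \subset \Sigma(I_{a_\infty})$.

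With (iii) and (iv) in hand, the remaining parts are bookkeeping. In (i), $_{a_\infty}I \neq \{0\}$ because every nonzero ideal contains $F$ and $_{a_\infty}F = F$ (if $\eta_{a_\infty}$ is eventually zero then so is $\eta$); the inclusion $_{a_\infty}I \subset (_{a_\infty}I)_{a_\infty}$ is (iv) applied to $_{a_\infty}I \subset \mathscr L_1 \subset se(\omega)$; and $(_{a_\infty}I)_{a_\infty} \subset I$ is the $(\Leftarrow)$ direction of (iii) with $J = {_{a_\infty}I}$. In (i$'$), the first inclusion is (iii) with $J = I$ (applied to the trivial inclusion $I_{a_\infty} \subset I_{a_\infty}$), and the second $_{a_\infty}(I_{a_\infty}) \subset I_{a_\infty}$ follows by applying (i) to the nonzero ideal $I_{a_\infty}$ and chaining the two resulting inclusions (when $I = \{0\}$ everything is trivial). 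For (ii), applying $_{a_\infty}$ to $(_{a_\infty}I)_{a_\infty} \subset I$ yields $_{a_\infty}((_{a_\infty}I)_{a_\infty}) \subset {_{a_\infty}I}$, while (iii) with $J = {_{a_\infty}I}$ gives the reverse; idempotence of $I \mapsto (_{a_\infty}I)_{a_\infty}$ is then the immediate computation $(_{a_\infty}((_{a_\infty}I)_{a_\infty}))_{a_\infty} = ({_{a_\infty}I})_{a_\infty}$. Symmetrically, for (ii$'$) I would apply postfix $a_\infty$ to the first inclusion of (i$'$) and invoke $I_{a_\infty} = (I \cap \mathscr L_1)_{a_\infty}$ to obtain $I_{a_\infty} \subset (_{a_\infty}(I_{a_\infty}))_{a_\infty}$, and combine with (iii) (now with $J = {_{a_\infty}(I_{a_\infty})}$) for the reverse, from which idempotence of $I \mapsto {_{a_\infty}(I_{a_\infty})}$ follows in one line.

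The only step I expect to present any real difficulty is the non-summable case of the sufficiency half of (iv), and that difficulty is entirely absorbed by the sparse-subsequence construction already carried out in the proof of Lemma \ref{L:(xi) subset (xiainfty)}. Once that technical input is available, the rest of the proposition is a disciplined exercise in inclusion-preservation together with the identity $I_{a_\infty} = (I \cap \mathscr L_1)_{a_\infty}$, and no additional delicate estimates on sequences are required.
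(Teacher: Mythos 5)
Your proof is correct, and it takes a genuinely different (though equally elementary) route through the six parts. The paper takes (i) and (i$'$) as foundational: it proves the basic inclusions $_{a_\infty}I \subset I$ and $I \cap \mathscr L_1 \subset I_{a_\infty}$ directly from Corollary~\ref{C:(xi)subset(xi ainfty)} and the definitions, bootstraps to the remaining inclusions of (i), (i$'$), then derives (ii), (ii$'$) by sandwiching, derives (iii) as a corollary of (i), (i$'$), and finally handles (iv) separately via Corollary~\ref{C:(xi)subset(xi ainfty)} and Lemma~\ref{L:(xi) subset (xiainfty)}. You invert the dependency: you prove the ``adjunction'' (iii) directly from the definitions of the characteristic sets $\Sigma(_{a_\infty}I)$ and $\Sigma(J_{a_\infty})$ (a clean two-line unwinding that the paper does not spell out), prove (iv) as the paper does, and then read off (i), (i$'$), (ii), (ii$'$) as bookkeeping from (iii) and (iv). Your route buys a slight economy: (iii) is the single most-used fact, and proving it first makes the downstream derivations of (i)--(ii$'$) uniform applications of it. What it costs is the reliance on a couple of auxiliary observations you invoke without citation --- that $\mathscr L_1 \subset se(\omega)$ (needed to apply (iv) to $_{a_\infty}I$; true because monotone summable sequences are $o(\omega)$, and implicit in the paper's computation $(\mathscr L_1)_{a_\infty} = (\omega)_{a_\infty}$) and that $_{a_\infty}F = F$ (your argument for $_{a_\infty}I \neq \{0\}$ via $F \subset I$; the paper's shorter observation is simply that $\langle 1,0,0,\dots\rangle_{a_\infty} = 0 \in \Sigma(I)$ for every ideal $I$). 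Both are elementary, so the proof stands; just note that these small facts deserve an explicit sentence if written for publication.
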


\begin{proof}  
\item[(i) and (i$'$)]
The inclusions $_{a_\infty}I \subset I$ and 
$I \cap \mathscr L_1 \subset I_{a_\infty}$ follow from Definition \ref{D:ainftyI,Iainfty} and Corollary \ref{C:(xi)subset(xi ainfty)}. 
Applying the first inclusion to $I_{a_\infty}$ and the second to $_{a_\infty}I$ obtains $_{a_\infty}(I_{a_\infty}) \subset I_{a_\infty}$ 
and $_{a_\infty}I =~ _{a_\infty}I \cap \mathscr L_1 \subset (_{a_\infty}I)_{a_\infty}$.
The remaining two inclusions follow directly from Definition \ref{D:ainftyI,Iainfty}. 
And since $<1,0,\dots>_{a_\infty} = 0$, $_{a_\infty}I \neq \{0\}$ for all $I \ne \{0\}$.
\item[(ii)] By (i) and (i$'$), 
$_{a_\infty}((_{a_\infty}I)_{a_\infty}) \subset~ _{a_\infty}I =~ _{a_\infty}I \cap \mathscr L_1  \subset~ _{a_\infty}((_{a_\infty}I)_{a_\infty})$. 
\item[(ii$'$)] Again by (i$'$) and (i), 
$I_{a_\infty} = (I \cap \mathscr L_1)_{a_\infty} \subset~ (_{a_\infty}(I_{a_\infty}))_{a_\infty} \subset I_{a_\infty}$. 
\item[(iii)] 
If $J_{a_\infty} \subset I$ then, by (i$'$), one has $J \cap \mathscr L_1 \subset~_{a_\infty}(J_{a_\infty}) \subset~_{a_\infty}I$.
Conversely, if $J \cap \mathscr L_1 \subset~ _{a_\infty}I$ then, 
by (i) and the paragraph preceding Lemma \ref{L:(xi) subset (xiainfty)}, one has
$J_{a_\infty} =(J \cap \mathscr L_1)_{a_\infty} \subset (_{a_\infty}I)_{a_\infty} \subset I$.

\item[(iv)] 
That $I \cap se(\omega) \subset I_{a_\infty}$ is a simple consequence of Corollary \ref{C:(xi)subset(xi ainfty)} and Lemma \ref{L:(xi) subset (xiainfty)}. 
In particular, $I \subset se(\omega) ~\text{implies}~ I \subset I_{a_\infty}$. 
The converse implication is automatic since, by definition, $I_{a_\infty} \subset se(\omega)$.
\end{proof}

Immediate consequences of Proposition \ref{P:aminfty relations}(iii), Lemma \ref{L:(xi) subset (xiainfty)}, 
and the identities $sc~se(\omega) = sc(\omega) = (\omega)$ and $se~scI = seI \subset I$ are:

\begin{corollary}\label{C: ainftyI = L1 if and only if se(omega) subset I} Let $I\ne \{0\}$ be an ideal. 
\item[(i)] $_{a_\infty}I = \mathscr L_1$ if and only if $se(\omega) \subset I$ if and only if $\omega \in \Sigma(scI)$.
\item[(ii)] $I_{a_\infty} = se(\omega)$ if and only if $\mathscr L_1 =\,_{a_\infty}(I_{a_\infty})$. 
\end{corollary}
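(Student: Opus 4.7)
The plan is to deduce part (i) from Proposition \ref{P:aminfty relations}(iii) and Lemma \ref{L:(xi) subset (xiainfty)} for the first equivalence, and from the soft-operation identities $sc\,se(\omega) = sc(\omega) = (\omega)$ and $se\,scI = seI$ for the second, and then to obtain part (ii) by applying part (i) to the ideal $I_{a_\infty}$ itself.

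For $_{a_\infty}I = \mathscr L_1 \Leftrightarrow se(\omega) \subset I$: since $_{a_\infty}I \subset \mathscr L_1$ always holds by Definition \ref{D:ainftyI,Iainfty}, the equality reduces to $\mathscr L_1 \subset\, _{a_\infty}I$, i.e., to $\mathscr L_1 \cap \mathscr L_1 \subset\, _{a_\infty}I$. Proposition \ref{P:aminfty relations}(iii) applied with $J = \mathscr L_1$ translates this into $(\mathscr L_1)_{a_\infty} \subset I$, and Lemma \ref{L:(xi) subset (xiainfty)} identifies $(\mathscr L_1)_{a_\infty}$ with $se(\omega)$, completing the equivalence. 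For $se(\omega) \subset I \Leftrightarrow \omega \in \Sigma(scI)$: the inclusion-preserving property of $sc$ applied to $se(\omega) \subset I$, together with $sc\,se(\omega) = sc(\omega) = (\omega)$, yields $(\omega) \subset scI$, i.e., $\omega \in \Sigma(scI)$. Conversely, $\omega \in \Sigma(scI)$ gives $(\omega) \subset scI$; applying $se$ and using $se\,scI = seI \subset I$ recovers $se(\omega) \subset I$.

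Part (ii) is then immediate. For the forward direction, substituting $I_{a_\infty} = se(\omega)$ into the identity $\mathscr L_1 =\, _{a_\infty}(se(\omega))$ recorded in the paragraph following Definition \ref{D:ainftyI,Iainfty} yields $\mathscr L_1 =\, _{a_\infty}(I_{a_\infty})$. For the converse, the hypothesis $\mathscr L_1 =\, _{a_\infty}(I_{a_\infty})$ forces $I_{a_\infty} \ne \{0\}$, so part (i) may be applied to the ideal $I_{a_\infty}$ to deduce $se(\omega) \subset I_{a_\infty}$; combined with the always-true reverse inclusion $I_{a_\infty} \subset se(\omega)$ (also recorded after Definition \ref{D:ainftyI,Iainfty}), we conclude $I_{a_\infty} = se(\omega)$. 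No step is genuinely hard---everything is bookkeeping with identities already established in the preceding sections---the only point requiring attention being to verify the nontriviality of $I_{a_\infty}$ before invoking part (i) in the converse of part (ii).
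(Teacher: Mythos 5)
Your proof is correct and takes essentially the same route the paper intends: the paper simply declares the corollary an ``immediate consequence'' of Proposition \ref{P:aminfty relations}(iii), Lemma \ref{L:(xi) subset (xiainfty)}, and the identities $sc\,se(\omega)=sc(\omega)=(\omega)$ and $se\,scI=seI\subset I$, and your argument fleshes out exactly that chain (including the clean reduction of part (ii) to an application of part (i) to $J=I_{a_\infty}$).
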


\textit{Am-$\infty$ stability}, the analog of am-stability, is defined for nonzero ideals by any of the following equivalent conditions.
  
\begin{corollary}\label{C: a-infty stability equivalences}
Let $I \neq \{0\}$ be an ideal. The following are equivalent.
\item[(i)] $I=~_{a_\infty}I$
\item[(ii)] $I \cap \mathscr L_1 = I_{a_\infty}$
\item[(iii)] $I \subset \mathscr L_1$ and $I=I_{a_\infty}$
\end{corollary}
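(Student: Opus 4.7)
The plan is to prove the cycle $(\mathrm{i})\Rightarrow(\mathrm{iii})\Rightarrow(\mathrm{ii})\Rightarrow(\mathrm{i})$. The common thread is that each of the three conditions forces $I\subset \mathscr L_1$, after which Proposition \ref{P:aminfty relations} converts freely between the pre- and post-am-$\infty$ pictures.

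For $(\mathrm{i})\Rightarrow(\mathrm{iii})$, the containment $I\subset \mathscr L_1$ is automatic from Definition \ref{D:ainftyI,Iainfty} since $\Sigma({}_{a_\infty}I)\subset (\ell^1)^*$. Then $I\cap \mathscr L_1=I={}_{a_\infty}I$, so Proposition \ref{P:aminfty relations}(iii) applied with $J=I$ yields $I_{a_\infty}\subset I$. The reverse inclusion $I\subset I_{a_\infty}$ follows from Proposition \ref{P:aminfty relations}(iv) once one notes the elementary inclusion $\mathscr L_1\subset se(\omega)$: indeed, for $\xi\in (\ell^1)^*$ one has $n\xi_n\to 0$, so $\xi_n\leq \alpha_n\omega_n$ with $\alpha_n:=\sup_{k\geq n} k\xi_k\in \text{c}_{\text{o}}^*$.

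The implication $(\mathrm{iii})\Rightarrow(\mathrm{ii})$ is immediate from $I\cap \mathscr L_1=I$. For $(\mathrm{ii})\Rightarrow(\mathrm{i})$ I first rule out $I\not\subset \mathscr L_1$: by Lemma \ref{L:(xi) subset (xiainfty)} this would force $I_{a_\infty}=se(\omega)$, so (ii) would give $se(\omega)\subset \mathscr L_1$, which is false since e.g.\ $\alpha\omega$ with $\alpha_n=1/\log(n+1)$ lies in $\Sigma(se(\omega))\setminus \ell^1$. Hence $I\subset \mathscr L_1$ and (ii) reduces to $I=I_{a_\infty}$. Proposition \ref{P:aminfty relations}(i$'$) then gives $I=I\cap \mathscr L_1\subset {}_{a_\infty}(I_{a_\infty})={}_{a_\infty}I$, while Proposition \ref{P:aminfty relations}(i) gives the reverse inclusion ${}_{a_\infty}I\subset I$, so $I={}_{a_\infty}I$.

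The only non-bookkeeping ingredients are the paired observations $\mathscr L_1\subset se(\omega)$ and $se(\omega)\not\subset \mathscr L_1$, and both are one-line sequence estimates. I expect no substantive obstacle: the corollary is essentially a readout of the stronger structural information already packaged into Proposition \ref{P:aminfty relations} and Lemma \ref{L:(xi) subset (xiainfty)}, organized so that the ``$\subset \mathscr L_1$'' containment is extracted first and the sandwich argument applied second.
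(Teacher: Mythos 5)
Your proof is correct and relies on the same toolkit as the paper (Proposition \ref{P:aminfty relations} and Lemma \ref{L:(xi) subset (xiainfty)}), but you run the cycle in the opposite order: $(\mathrm{i})\Rightarrow(\mathrm{iii})\Rightarrow(\mathrm{ii})\Rightarrow(\mathrm{i})$ instead of the paper's $(\mathrm{i})\Rightarrow(\mathrm{ii})\Rightarrow(\mathrm{iii})\Rightarrow(\mathrm{i})$. For the step out of (i), you invoke Proposition \ref{P:aminfty relations}(iii) and (iv) (plus the observation $\mathscr L_1 \subset se(\omega)$), whereas the paper goes straight to parts (i) and (i$'$) of that proposition, applying the sandwich $_{a_\infty}I \subset (_{a_\infty}I)_{a_\infty} \subset I$ at $I=\,_{a_\infty}I$; since (iii) and (iv) are themselves corollaries of (i) and (i$'$), the paper's route is marginally more economical, but the content is the same. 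The step using Lemma \ref{L:(xi) subset (xiainfty)} to rule out $I\not\subset\mathscr L_1$ is common to both, and your explicit example $\alpha_n=1/\log(n+1)$ witnessing $se(\omega)\not\subset\mathscr L_1$ is a fine substitute for the paper's implicit appeal to the analogous fact. No gaps.
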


\begin{proof}  
\item[(i)$\Rightarrow$ (ii)] 
Since $I =~_{a_\infty}I \subset \mathscr L_1$, 
$I = I \cap \mathscr L_1 \subset I_{a_\infty}$ by Proposition \ref{P:aminfty relations}(i$'$) and the reverse inclusion follows by Proposition \ref{P:aminfty relations}(i).
\item[(ii)$\Rightarrow$ (iii)]
If $I \not\subset \mathscr L_1$, by Lemma \ref{L:(xi) subset (xiainfty)}, $I_{a_\infty} = se(\omega) \not\subset \mathscr L_1$, against (ii). 
\item[(iii)$\Rightarrow$ (i)] 
One has $I = I \cap \mathscr L_1 \subset~ _{a_\infty}(I_{a_\infty}) =~_{a_\infty}I \subset I$ by Proposition \ref{P:aminfty relations}(i$'$) and (i).
\end{proof}
\begin{definition}\label{D:am-infty stability/regularity}
An ideal $I \neq \{0\}$ is called am-stable at infinity 
(or am-$\infty$ stable) if $I=~_{a_\infty}I$.  
A sequence $\xi \in (\ell^1)^*$ is called regular 
at infinity ($\infty$-regular for short) if $(\xi)=~_{a_\infty}(\xi)$.
\end{definition}

\noindent Therefore, $(\xi)$ is $\infty$-regular if and only if 
$(\xi)=(\xi_{a_\infty})$ 
by Corollary \ref{C: a-infty stability equivalences} and Lemma \ref{L:(xi) subset (xiainfty)}, 
if and only if $\xi_{a_\infty} = O(D_m\xi)$ for some $m \in \mathbb N$ by Corollary \ref{C:(xi)subset(xi ainfty)} 
(cf. \cite[Corollary 5.6(c)]{mW02}), 
and surprisingly and more simply, if and only if $\xi_{a_\infty} = O(\xi)$ (see Theorem \ref{T: Theorem 4.12} below). 
The notion of regularity at infinity for summable sequences is 
an analog of the usual notion of regularity of nonsummable sequences that was used extensively in \cite{GK69} and that plays a key role also in Varga's construction of positive traces on principal ideals \cite{jV89}.

Several characterizations of regular sequences in the am-case have analogs in the am-$\infty$ case (Theorem \ref{T: Theorem 4.12} below),
although the proofs have to contend with the problem that $\xi_{a_\infty}$ may not satisfy the $\Delta_{1/2}$ condition or, 
equivalently (Corollary \ref{C:(xi) subset (xiainfty)}), that $\xi$ may not be $O(\xi_{a_\infty})$. 

For convenience we recall the definition of the Matuszewska indices $\alpha(\xi)$ and $\beta(\xi)$ for a monotonic sequence 
$\xi$ (\cite[Section 2.4]{DFWW}):
\[    \alpha(\xi): = \underset{n}{\lim}\,\frac{log\,\overline{\xi}_n}{log\,n} = \underset{n\geq 2}{\inf}\frac{log\,\overline{\xi}_n}{log\,n} 
\quad \text{and}\quad     \beta(\xi):= \underset{n}{\lim}\,\frac{log\,\underline{\xi}_n}{log\,n} = \underset{n\geq 2}{\sup}\frac{log\,\underline{\xi}_n}{log\,n} \]
where $\overline{\xi}_n := \underset{k}{\sup}\frac{\xi_{kn}}{\xi_k}$ and $\underline{\xi}_n := \underset{k}{\inf}\frac{\xi_{kn}}{\xi_k}$. 
It can be shown that
\[
\alpha(\xi) = \inf \{\gamma \mid \exists\,C>0~\text{such that}~ \xi_n \leq C\left(\frac{n}{m}\right)^\gamma \xi_m ~\text{for all}~ n \geq m\} 
\]
\[
\beta(\xi) = \sup \{\gamma \mid \exists \,C>0~\text{such that}~ \xi_n \geq C\left(\frac{n}{m}\right)^\gamma \xi_m ~\text{for all}~ n \geq m \}.
\]
The above inequalities characterizing the Matuszeswka indices are the discrete analog of the Potter type inequalities in the theory of functions of regular and O-regular variation (cf. \cite[Proposition 2.2.1]{BGT89}), 
and were linked to regularity of $\text{c}_\text{o}^*$-sequences in \cite[Theorem 3.10]{DFWW},
where it was proven that a sequence $\xi \in \text{c}_{\text{o}}^*$  is regular if and only if $\beta(\xi) > -1$ if and only if $\xi_a$ is regular. 
As indicated in \cite[Remark 3.11]{DFWW}, 
the equivalence of $\xi$ regularity and $\xi_a$ regularity is also implicit in the work of Varga \cite[Theorem IRR]{jV89}.

\begin{theorem}\label{T: Theorem 4.12}
If $\xi \in (\ell^1)^*$, the following conditions are equivalent.
\item[(i)]   $\xi$ is $\infty$-regular.
\item[(ii)]  $\xi_{a_\infty} = O(\xi)$
\item[(iii)] $\alpha(\xi) < -1$, i.e., there are constants $C > 0$ and $p > 1$ for which $\xi_n \leq C (\frac{m}{n})^p \xi_m$ for all integers $n \geq m$.
\item[(iv)]  $\xi_{a_\infty}$ is $\infty$-regular.
\item[(v)]  $\underset{n}{\inf} \frac{(\xi_{a_\infty})_n}{(\xi_{a_\infty})_{kn}} > k$ for some integer $k > 1$
\item[(v$'$)]  $\underset{n}{\inf} \frac{(\xi_{a_\infty})_n}{(\xi_{a_\infty})_{kn}} > k$ for all integers $k > 1$
\item[(v$''$)] $\underset{n}{\inf} \frac{\xi_n}{(\xi_{a_\infty})_{kn}} > 0$ for all integers $k > 1$
\item[(v$'''$)] $\underset{n}{\inf} \frac{\xi_n}{(\xi_{a_\infty})_{kn}} > 0$ for some integer $k > 1$
\item[(vi)] $\underset{k}{\sup}~ \underset{n}{\inf} \frac{(\xi_{a_\infty})_n}{k((\xi_{a_\infty})_{kn}} = \infty$
\end{theorem}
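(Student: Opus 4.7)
My plan is to establish these equivalences by splitting the conditions into three blocks, linked through a single tail-sum identity and Matuszewska-type machinery analogous to \cite[Theorem 3.10]{DFWW}. The identity in question, used repeatedly below, is
\[
(\xi_{a_\infty})_n = k(\xi_{a_\infty})_{kn} + \frac{1}{n}\sum_{j=n+1}^{kn}\xi_j,
\]
combined with the monotonicity bounds $(k-1)\xi_{kn} \leq \frac{1}{n}\sum_{j=n+1}^{kn}\xi_j \leq (k-1)\xi_n$. This immediately handles the cycle among (i), (ii), (v), (v$'$), (v$''$), (v$'''$): for instance (v$'''$) $\Rightarrow$ (ii) uses $\xi_n \geq c(\xi_{a_\infty})_{kn}$ and the upper tail-sum bound to get $(\xi_{a_\infty})_n \leq (k-1+k/c)\xi_n$; (ii) $\Rightarrow$ (v$'$) uses the lower bound to get $(\xi_{a_\infty})_n/(\xi_{a_\infty})_{kn} \geq k+(k-1)/M > k$; (ii) $\Rightarrow$ (v$''$) is trivial. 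The ``surprising'' link (i) $\Rightarrow$ (ii) is equally direct: evaluating $\xi_{a_\infty}\leq MD_m\xi$ at index $mk$ gives $\sum_{j>mk}\xi_j \leq Mmk\xi_k$, and absorbing $\sum_{k<j\leq mk}\xi_j \leq (m-1)k\xi_k$ (again by monotonicity) produces $(\xi_{a_\infty})_k \leq (Mm+m-1)\xi_k$; the reverse (ii) $\Rightarrow$ (i) is then immediate from Corollary \ref{C:(xi)subset(xi ainfty)}.

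For (ii) $\Leftrightarrow$ (iii), the direction (iii) $\Rightarrow$ (ii) is straightforward: the Potter bound $\xi_n \leq C(m/n)^p\xi_m$ together with $\sum_{j>n}j^{-p} = O(n^{1-p})$ yields $(\xi_{a_\infty})_n \leq Cn\xi_n/(p-1)$. For the converse, the technical heart of the theorem, I iterate the recurrence $f(n):=\sum_{j>n}\xi_j \leq Mn(f(n-1)-f(n))$ to obtain $f(n)/f(m)\leq \prod_{k=m+1}^n(1-1/(Mk+1))\leq (m/n)^{1/(M+1)}$, hence polynomial decay $f(n)\leq Cn^{-\beta}$ with $\beta>0$. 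Combined with the monotonicity bound $\xi_{2n}\leq f(n)/n$, this gives $\xi_n\leq C'n^{-(1+\beta)}$, so $\alpha(\xi)\leq -(1+\beta)<-1$. Promoting this polynomial upper bound to the full Potter inequality uses two ingredients: plateaus of $\xi$ are controlled by (ii), since a constant block on $[m,Lm]$ forces $L\leq M+1$, while for $n\gg m$ the matching lower bound $\xi_m\geq f(m)/(Mm)$ from (ii) lets the polynomial decay of $f$ close the gap.

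Finally, for (iii) $\Leftrightarrow$ (iv), Lemma \ref{L:relations between Dmxi,(Dmxi)ainfty,Dmxiainfty}(iv) combined with Potter on $\xi$ forces $(\xi_{a_\infty})_n\leq C(\xi_{a_\infty})_{2n}$, so $\xi_{a_\infty}$ satisfies the $\Delta_{1/2}$-condition; Corollary \ref{C:(xi) subset (xiainfty)}(ii) then gives $\xi\asymp\xi_{a_\infty}$, and the two sequences share the Matuszewska index. Conversely, Potter on $\xi_{a_\infty}$ yields $(\xi_{a_\infty})_n/(\xi_{a_\infty})_{kn}\geq k^p/C$, and for $k>C^{1/(p-1)}$ this is exactly (v). For (vi): (vi) $\Rightarrow$ (v) is immediate by choosing $k$ with infimum exceeding $1$, while (iii) $\Rightarrow$ (vi) follows because Potter's two-sided control yields $(\xi_{a_\infty})_n/(k(\xi_{a_\infty})_{kn})\geq ck^{p-1}\to\infty$. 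The main obstacle I anticipate is the (ii) $\Rightarrow$ (iii) step: the naive pointwise iteration $\xi_{Kn}\leq M\xi_n/(K-1)$ from (ii) produces an exponent $\log_K((K-1)/M)$ that never exceeds $1$ for any $K$, so the argument must route through the tail sums $f(n)$ to harvest the extra factor of $n$ supplied by $\xi_{2n}\leq f(n)/n$.
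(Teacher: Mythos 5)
Your overall architecture — the tail-sum identity
\[
n(\xi_{a_\infty})_n = kn(\xi_{a_\infty})_{kn} + \sum_{j=n+1}^{kn}\xi_j,
\qquad (k-1)n\xi_{kn}\le \sum_{j=n+1}^{kn}\xi_j\le (k-1)n\xi_n ,
\]
for the block (i), (ii), (v)--(v$'''$), the recurrence for $f(n)=\sum_{j>n}\xi_j$ for (ii) $\Rightarrow$ (iii), and the use of $\xi_{2n}\le (\xi_{a_\infty})_n$ to convert tail-sum decay into pointwise decay — is sound and parallels the paper's proof. Your (i) $\Rightarrow$ (ii) is in fact noticeably cleaner than the paper's direct argument: you observe that $\xi_{a_\infty}\le MD_m\xi$ at the indices $mn$ is precisely condition (v$'''$), and the one-line tail-sum decomposition then gives (ii), bypassing the paper's multi-page geometric decomposition. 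This shortcut is correct.

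However, there is a genuine gap in the (iii) $\Rightarrow$ (iv) step. You claim that Lemma \ref{L:relations between Dmxi,(Dmxi)ainfty,Dmxiainfty}(iv) combined with the Potter bound for $\xi$ forces $(\xi_{a_\infty})_n \le C(\xi_{a_\infty})_{2n}$, hence that $\xi_{a_\infty}$ satisfies the $\Delta_{1/2}$-condition and $\xi\asymp\xi_{a_\infty}$. This is false. The Potter bound $\xi_n\le C(m/n)^p\xi_m$ is an upper bound on $\xi_n$ in terms of $\xi_m$; combined with $(\xi_{a_\infty})_{2n}\ge\xi_{4n}$ it would require a lower bound of the form $\xi_{4n}\ge c\,\xi_n$, which is exactly the $\Delta_{1/2}$-condition for $\xi$ and is not a consequence of (iii). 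Concretely, $\xi_n=q^n$ (Example \ref{E:infty regulars/irregulars}(ii) in the paper, with $0<q<1$) satisfies (iii) — one checks $q^{n-m}(n/m)^p$ is bounded — yet $(\xi_{a_\infty})_n/(\xi_{a_\infty})_{2n}=2q^{-n}\to\infty$, so $\xi_{a_\infty}$ fails $\Delta_{1/2}$ and $\xi\not\asymp\xi_{a_\infty}$. The conclusion (iv) is still true for this $\xi$, of course, but your route to it breaks. The standard repair is to go through (ii) rather than $\Delta_{1/2}$: from $\xi_{a_\infty}\le M\xi$ apply the order-preserving operator $a_\infty$ once more to get $(\xi_{a_\infty})_{a_\infty}\le M\xi_{a_\infty}$, which is (ii) for $\xi_{a_\infty}$ and hence (iv). (Your (iii) $\Rightarrow$ (vi) argument, by contrast, uses the sandwich $\xi_{2n}\le(\xi_{a_\infty})_n\le\frac{C}{p-1}\xi_n$ to compare $\xi_{a_\infty}$ at different scales, never requiring a $\Delta_{1/2}$-type two-sided comparison at the same scale, and so survives.)
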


\begin{proof} 
\item[(i) $\Rightarrow$ (ii)] Assume that $\xi$ is regular at infinity, that is, $(\xi) =\, _{a_\infty}(\xi)$. 
Then $(\xi) = (\xi)_{a_\infty} = (\xi_{a_\infty})$ by Corollary \ref{C: a-infty stability equivalences} and Lemma \ref{L:(xi) subset (xiainfty)} 
and therefore $\xi_{a_\infty} \leq MD_m \xi$ for some $m \in \mathbb N$ and $M > 0$. 
In particular, $(\xi_{a_\infty})_{mn} \leq M \xi_n$ for all $n$.
The case $m=1$ is (ii). If $m>1$ then  
\begin{align*}
(\xi_{a_\infty})_{mn} &= \frac{1}{mn} \sum_{mn+1}^{\infty} \xi_i 
= \frac{1}{mn} \{\sum_{mn+1}^{m^2n} \xi_i + \sum_{m^2n+1}^{m^3n}\xi_i + \sum_{m^3n+1}^{m^4n}\xi_i + \cdots \} \\
&\geq \frac{1}{mn} \{(m-1)mn \,\xi_{m^2n} + (m-1)m^2n \, \xi_{m^3n} + (m-1)m^3n \, \xi_{m^4n} + \cdots \} \\
&= \frac{m-1}{m^2} \sum_{k=2}^{\infty} m^k \, \xi_{m^kn}.
\end{align*}
Thus $\frac{m-1}{m^2} \sum_{k=2}^{\infty} m^k \xi_{m^kn} \leq M\xi_n$ for all $n$ and hence, by substituting here $mn$ for $n$,
\[
\frac{m-1}{m^3} \sum_{k=3}^{\infty} m^k \xi_{m^kn} \leq M\xi_{mn}.
\]
On the other hand, 
the same formula for $(\xi_{a_\infty})_{mn}$ yields 
\begin{align*}
(\xi_{a_\infty})_{mn} &\leq \frac{1}{mn} \{(m-1)mn \,\xi_{mn} + (m-1)m^2n \,\xi_{m^2n} + (m-1)m^3n \,\xi_{m^3n} + \cdots\} \\
&= \frac{m-1}{m} (m \,\xi_{mn} + m^2 \,\xi_{m^2n}) + \frac{m-1}{m} \sum_{k=3}^{\infty} m^k \,\xi_{m^kn} \\
&\leq \frac{m-1}{m} (m + m^2) \,\xi_{mn} + m^2 M \xi_{mn} = M'\xi_{mn}.
\end{align*}
From this, since for each $j \in \mathbb N$, $j=mn-p$ for some $n \in \mathbb N$ and some $0 \leq p \leq m-1$, one obtains
\begin{align*} 
(\xi_{a_\infty})_j &= \frac{1}{mn-p} \{ \xi_{mn-p+1} + \xi_{mn-p+2} + \cdots + \xi_{mn} + \sum_{i=mn+1}^{\infty} \xi_i \} \\
&\leq \frac{p}{mn-p} \xi_{mn-p} + \frac{mn}{mn-p}(\xi_{a_\infty})_{mn} \\
&\leq \frac{p}{mn-p} \xi_{mn-p} +  \frac{mn}{mn-p} M'\xi_{mn} \\
&\leq (\frac{p}{mn-p} +  \frac{mn}{mn-p}M')\,\xi_{mn-p} \\
&\leq (m-1 + mM')\,\xi_j,
\end{align*}
which concludes the proof.
\item[(ii) $\Rightarrow$ (i)]  Obvious from remarks following Definition \ref{D:am-infty stability/regularity}.
\item[(ii) $\Rightarrow$ (iii)] Let $\xi_{a_\infty} \leq M\xi$ for some $M > 0$ and 
without loss of generality assume that $\xi_n > 0$ for all $n$. 
From the basic identity 
\[(n-1)(\xi_{a_\infty})_{n-1} =  \xi_n + n(\xi_{a_\infty})_n\] follows the recurrence
\[
(\xi_{a_\infty})_n = \frac{\frac{n-1}{n}}{1+ \frac{1}{n} (\frac{\xi}{\xi_{a_\infty}})_n} (\xi_{a_\infty})_{n-1}
\]
and hence for all $n > m \geq 1$,
\begin{align*}
(\xi_{a_\infty})_n &= \frac{\frac{m}{n}}{\prod_{j=m+1}^{n} (1+\frac{1}{j} (\frac{\xi}{\xi_{a_\infty}})_j)} (\xi_{a_\infty})_m \\
        &\leq \frac{\frac{m}{n}}{\prod_{j=m+1}^{n} (1+\frac{1}{Mj})} (\xi_{a_\infty})_m \\
        &= \frac{m}{n} e^{-\sum_{j=m+1}^{n} log \,(1+\frac{1}{Mj})} (\xi_{a_\infty})_m.
\end{align*}
Let $N$ be the smallest integer larger than or equal to $\frac{1}{M}$, 
$p:=1+\frac{1}{M}$ and set \linebreak
$K:=N^pe^{(\frac{1og\,2}{M}+\frac{1}{2M^2})}$.
If $m \geq N$ then $Mj>1$ for all $j \geq m+1$ and hence $log\,(1+\frac{1}{Mj})> \frac{1}{Mj}-\frac{1}{2M^2j^2}$.
Thus if $n > m \geq N$,
\begin{align*}
(\xi_{a_\infty})_n &\leq \frac{m}{n} e^{-\sum_{j=m+1}^{n} (\frac{1}{Mj}-\frac{1}{2M^2j^2})} (\xi_{a_\infty})_m \\
        &\leq \frac{m}{n} e^{(-\frac{1}{M} \,log\,\frac{n+1}{m+1}\,+\,\frac{1}{2M^2})} (\xi_{a_\infty})_m \\
       &\leq e^{(\frac{log~2}{M}+\frac{1}{2M^2})} (\frac{m}{n})^p (\xi_{a_\infty})_m.
\end{align*}
If $n > N > m$, the above inequality implies
\[
(\xi_{a_\infty})_n \leq   e^{(\frac{log~2}{M}+\frac{1}{2M^2})} (\frac{N}{n})^p   (\xi_{a_\infty})_N  \leq K(\frac{m}{n})^p (\xi_{a_\infty})_m.
\]
If $N \geq n \geq m$ then $K \geq N^p \geq (\frac{n}{m})^p$ and hence 
\[
(\xi_{a_\infty})_n \leq (\xi_{a_\infty})_m \leq  K(\frac{m}{n})^p (\xi_{a_\infty})_m.
\]
Thus $(\xi_{a_\infty})_n \leq  K(\frac{m}{n})^p (\xi_{a_\infty})_m$ for all $n \geq m$, hence
$(\xi_{a_\infty})_n \leq MK(\frac{m}{n})^p \xi_m$.

From Lemma \ref{L:relations between Dmxi,(Dmxi)ainfty,Dmxiainfty}(iv), 
$\xi_{2n} \leq (\xi_{a_\infty})_n$ so
\[
\xi_{2n} \leq MK(\frac{m}{n})^p \xi_m
\]
for all $n \geq m$.
Set $C = 3^pMK$
and let $k \geq 2m$. If $k$ is even, then
\[
\xi_k \leq MK(\frac{m}{k/2})^p \xi_m \leq C (\frac{m}{k})^p \xi_m,
\]
while if $k$ is odd then
\[
\xi_k \leq \xi_{n-1} \leq MK(\frac{m}{(k-1)/2})^p \xi_m \leq  3^p MK(\frac{m}{k})^p \xi_m = C (\frac{m}{k})^p \xi_m.
\]
Finally, if $m \leq k < 2m$, $\xi_k \leq \xi_m < 2^p(\frac{m}{k})^p\xi_m \leq C(\frac{m}{k})^p\xi_m$
since $MK > M2^{1/M} > 1$.

\item[(iii) $\Rightarrow$ (ii)] 
A direct computation shows that 
$(\xi_{a_\infty})_n \leq \frac{C}{p-1}\xi_n$.
\item[(ii) $\Rightarrow$ (iv)] If $\xi_{a_\infty} \leq M\xi$, then  $\xi_{a_\infty^2} \leq M\xi_{a_\infty}$, 
hence by the equivalence of (i) and (ii), $\xi_{a_\infty}$ is $\infty$-regular.
\item[(iv) $\Rightarrow$ (ii)] Since (i) and (iii) are equivalent, there exists $p > 1,\,C>0$ so \\
$\left(\xi_{a_\infty}\right)_n \leq C\left(\frac{m}{n}\right)^p\left(\xi_{a_\infty}\right)_m$ for all $n \geq m$. 
Thus 
$\left(\xi_{a_\infty}\right)_{km} \leq \left(\frac{1}{k}\right)^q\left(\xi_{a_\infty}\right)_m$ for some \linebreak
$q > 1$ and integer $k > 1$, 
and hence $\sum_{km+1}^\infty \xi_i \leq \left(\frac{1}{k}\right)^{q-1}\sum_{m+1}^\infty \xi_i$ for all $m$. 
But then $\left(1-(\frac{1}{k})^{q-1}\right)  \sum_{m+1}^\infty \xi_i\leq \sum_{m+1}^{km} \xi_i \leq (k - 1)m\xi_m$ and hence (ii) holds.
\item[(ii) $\Rightarrow$ (v$'$) $\Rightarrow$ (v$''$) $\Rightarrow$ (v$'''$)] For every $k > 1$, 
\[(k-1)n \xi_{kn} \leq \sum_{j=n+1}^{kn} \xi_j \leq (k-1)n\xi_n \quad \text{for all $n$,} \] 
hence
\[
1+\frac{(k-1)\xi_{kn}}{k(\xi_{a_\infty})_{kn}} \leq  \frac{\sum_{n+1}^{\infty} \xi_j}{\sum_{kn+1}^\infty \xi_j} \leq 1+\frac{(k-1)\xi_n}{k(\xi_{a_\infty})_{kn}} 
\]
or, equivalently,
\[
(k-1)\frac{\xi_{kn}}{(\xi_{a_\infty})_{kn}}  \leq   \frac{(\xi_{a_\infty})_{n}}{(\xi_{a_\infty})_{kn}}-k   \leq (k-1)\frac{\xi_n}{(\xi_{a_\infty})_{kn}}.
\]
Thus, for every $k > 1$,
\[
\xi_{a_\infty} = O(\xi) \Rightarrow \underset{n}{\inf}\frac{\xi_{kn}}{(\xi_{a_\infty})_{kn}} > 0 \Rightarrow \underset{n}{\inf}\frac{(\xi_{a_\infty})_n}{(\xi_{a_\infty})_{kn}} > k \Rightarrow \underset{n}{\inf} \frac{\xi_n}{(\xi_{a_\infty})_{kn}} > 0.
\]
\item[(v$'$) $\Rightarrow$ (v) $\Rightarrow$ (v$'''$) $\Rightarrow$ (i)]
The first implication is obvious and the second follows from the same double inequality we used above.
If (iv$'''$) holds, i.e.,  for some $M>0$ and $k > 1$,  $(\xi_{a_\infty})_{kn} \leq M\xi_n$ for all $n$, 
then for $j \in \mathbb N$, $j=kn-p$ with $0 \leq p \leq k-1$ and
\begin{align*}
(\xi_{a_\infty})_j &= \frac{1}{kn-p} (\xi_{kn-p+1} + \cdots + \xi_{kn} + kn(\xi_{a_\infty})_{kn}) \\
&\leq  \frac{1}{kn-p} (p\xi_n + knM\xi_n)  \\
&\leq  (k - 1 + kM)\xi_n  \\
&=  (k - 1 + kM) (D_k \xi)_j.
\end{align*}
Thus $\xi_{a_\infty} = O(D_k\xi)$, that is, $\xi$ is $\infty$-regular.
\item[(v$'$) $\Rightarrow$ (vi)] $\underset{k}{\sup}~ \underset{n}{\inf} \frac{(\xi_{a_\infty})_n}{k(\xi_{a_\infty})_{kn}} \geq 1$ 
since $n(\xi_{a_\infty})_n \geq kn(\xi_{a_\infty})_{kn}$ for all $n,k$.
Suppose  
$\underset{k}{\sup}~ \underset{n}{\inf} \frac{(\xi_{a_\infty})_n}{k(\xi_{a_\infty})_{kn}} = M < \infty$. 
Then $M = 1$ since otherwise, for some $k$, 
$\underset{n}{\inf} \frac{(\xi_{a_\infty})_n}{k(\xi_{a_\infty})_{kn}} > \sqrt {M}$ and hence 
$
\underset{n}{\inf} \frac{(\xi_{a_\infty})_n}{k^2(\xi_{a_\infty})_{k^2n}} 
\geq \underset{n}{\inf} \frac{(\xi_{a_\infty})_n}{k(\xi_{a_\infty})_{kn}}~ \underset{n}{\inf} \frac{(\xi_{a_\infty})_{kn}}{k(\xi_{a_\infty})_{k^2n}} > M
$
against the definition of $M$. But $M = 1$ contradicts (v$'$). 
\item[(vi) $\Rightarrow$ (v)] Obvious.
\end{proof}

\begin{remark}\label{R: Aljancic/Kalton}
\item[(a)] The Potter type inequality in (iii) 
(cf. \cite[Proposition 2.2.1]{BGT89}, and see also \cite{Aljan-Aran77} and \cite[Theorem 3.10, Remark 3.11]{DFWW})
was shown by Kalton in \cite[Corollary 7]{nK87} to be necessary and sufficient for $(\xi)$ 
to support a unique separately continuous trace. 
By Theorem \ref{T: Theorem 4.12} and  Theorem \ref{T:i-iii} below this condition is also necessary and sufficient for $(\xi)$ to support a unique trace, 
which in this case coincides with $Tr$ and hence is separately continuous.
\item[(b)] In the course of the proof of (ii) $\Rightarrow$ (iii) we have obtained that if $\xi$ is $\infty$-regular and if $\xi_{a_\infty} \leq M\xi$, 
then $\alpha(\xi) \leq -\left(1+\frac{1}{M}\right)$. 
Hence $\alpha(\xi) \leq -1 - inf\,\frac{\xi}{\xi_{a_\infty}}$.
\item[(c)] The proof of  (iv) $\Rightarrow$ (ii) is an adaptation of the proof in the am-case in \cite[Theorem 3.10: (c)$'$ $\Rightarrow$ (a)]{DFWW}. 
Conditions (v)-(v$'''$) and their proofs are am-$\infty$ analogues of the characterizations of regular sequences given by Varga in \cite[Lemma 1]{jV89}. Albeverio et al. in \cite{AGPS87} used conditions equivalent to the negation of (v) and (vi) to define ``generalized eccentric" operators (for the trace class case) for which their main result showed the existence of positive singular traces.
\item[(d)] 
Condition (ii) strengthens \cite[Corollary 5.19]{DFWW} and \cite[Corollary 5.6]{mW02} by eliminating the need for ampliations, i.e., replacing the condition $\xi_{a_\infty} = O(D_m\xi)$ for some $m$ by the condition $\xi_{a_\infty} = O(\xi)$.
\item[(d)] Whereas regular sequences are those for which $\xi \asymp \xi_a$, this is not true for the am-$\infty$ case.
In fact, by Corollary \ref{C:(xi) subset (xiainfty)}(ii), $\xi \asymp \xi_{a_\infty}$ if and only if $\xi$ is $\infty$-regular 
(i.e., $(\xi)=(\xi_{a_\infty})$) and satisfies the $\Delta_{1/2}$-condition. 
And as Example \ref{E:infty regulars/irregulars}(ii) shows, $\xi$ can be regular at infinity while $\xi \not\asymp \xi_{a_\infty}$. 
\end{remark}

The equivalence of (ii) and (iii) is the am-$\infty$ analog of \cite[Theorem 3.10(a),(b),(b)$'$]{DFWW}.
We give a direct proof of the am-case that provides also a lower bound for $\beta(\xi)$.

\begin{proposition}\label{P: Aljancic/Kalton} A sequence $\xi \in \text{c}_{o}^*$ is regular, 
i.e., $\xi_a = O(\xi)$, 
 if and only if there are constants $C > 0$ and $0 < p < 1$ for which $\xi_n \geq C(\frac{m}{n})^p \,\xi_m$ for all $n \geq m$, and then $\beta(\xi)  \geq -1 + inf\,\frac{\xi}{\xi_a}$.
\end{proposition}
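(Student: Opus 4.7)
The plan is to mimic the strategy used for (ii)$\,\Leftrightarrow\,$(iii) in Theorem \ref{T: Theorem 4.12}, exploiting the elementary recursion
\[
n(\xi_a)_n - (n-1)(\xi_a)_{n-1} = \xi_n
\]
in place of its am-$\infty$ analog. The argument is in fact somewhat simpler, because $\xi_a$ automatically satisfies $D_2\xi_a \leq 2\xi_a$, so no auxiliary $\Delta_{1/2}$-type step is needed.

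For the forward direction, assume $\xi \neq 0$ and $\xi_a \leq M\xi$ (necessarily $M \geq 1$ since $\xi$ is decreasing, and $\xi_n > 0$ for all $n$). The recursion combined with $\xi_n \geq (\xi_a)_n/M$ rearranges at once to the one-step ratio bound
\[
\frac{(\xi_a)_n}{(\xi_a)_{n-1}} \geq \frac{n-1}{n-1/M}.
\]
Setting $q := 1/M \in (0,1]$ and iterating gives, for $n > m$,
\[
\frac{(\xi_a)_n}{(\xi_a)_m} \geq \prod_{j=m+1}^{n} \frac{j-1}{j-q} = \frac{\Gamma(n)\,\Gamma(m+1-q)}{\Gamma(m)\,\Gamma(n+1-q)}.
\]
Either the asymptotic $\Gamma(k+a)/\Gamma(k) \sim k^a$ or an elementary log-estimate of the product produces a constant $C > 0$ for which this ratio is bounded below by $C(m/n)^{1-q}$. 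Applying $\xi_n \geq (\xi_a)_n/M$ and $\xi_m \leq (\xi_a)_m$ yields the Potter-type inequality
\[
\xi_n \geq \frac{C}{M}\left(\frac{m}{n}\right)^{1-1/M}\xi_m \qquad (n \geq m),
\]
which exhibits the exponent $p = 1 - 1/M$. Taking $M = 1/\inf\frac{\xi}{\xi_a}$ (the optimal constant) and reading off $p$ via the characterization of $\beta(\xi)$ recalled before Theorem \ref{T: Theorem 4.12} immediately gives $\beta(\xi) \geq -1 + \inf\frac{\xi}{\xi_a}$.

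For the converse, assume $\xi_n \geq C(m/n)^p\xi_m$ for all $n \geq m$ with $p \in (0,1)$. Rewriting as $\xi_m \leq C^{-1}(n/m)^p\xi_n$ and averaging,
\[
(\xi_a)_n = \frac{1}{n}\sum_{m=1}^n \xi_m \leq \frac{\xi_n\,n^{p-1}}{C}\sum_{m=1}^n m^{-p} \leq \frac{\xi_n}{C(1-p)}\bigl(1 + o(1)\bigr),
\]
using the elementary bound $\sum_{m=1}^n m^{-p} \leq 1 + (n^{1-p}-1)/(1-p)$. Hence $\xi_a = O(\xi)$.

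The only real technical care needed is in the forward direction: one must track the constants and exponents through the Gamma-function asymptotic (or the equivalent log-sum estimate) attentively enough that the exponent $1 - 1/M$ emerges intact, rather than being degraded to a strictly smaller value, since this is precisely what produces the sharp lower bound on $\beta(\xi)$. Everything else is routine estimation.
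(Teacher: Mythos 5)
Your proof is correct and follows essentially the same route as the paper: both exploit the recursion $n(\xi_a)_n = \xi_n + (n-1)(\xi_a)_{n-1}$, telescope it using the lower bound $(\xi/\xi_a)_j \geq 1/M$, and estimate the resulting product (your Gamma-function asymptotic being equivalent to the paper's log-sum estimate, which produces the explicit constant $(M2^{1/M})^{-1}$). The only cosmetic difference is that the paper bounds $\xi_n/\xi_m$ directly by keeping the extra factor $(\xi/\xi_a)_n$ in the numerator, whereas you bound $(\xi_a)_n/(\xi_a)_m$ and convert at the end via $\xi_n \geq (\xi_a)_n/M$ and $\xi_m \leq (\xi_a)_m$; the exponent $1 - 1/M$ and the bound on $\beta(\xi)$ come out the same either way.
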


\begin{proof} A simple computation shows that if $\xi_n \geq C(\frac{m}{n})^p\,\xi_m$ for all $n \geq m$, 
\linebreak then $\xi_a \leq \frac{1}{(1-p)C}\,\xi$. 

Conversely, assume $\xi\ne 0$ and $\xi_a = O(\xi)$, i.e., $\xi_a \leq M\xi$ for some $M>1$.
The identity $n(\xi_a)_n = \xi_n + (n-1)(\xi_a)_{n-1}$ implies the recurrence
\[
(\xi_a)_n = \frac{\frac{n-1}{n}}{1-\frac{1}{n}(\frac{\xi}{\xi_a})_n} (\xi_a)_{n-1}
\]
and hence 
\[
(\xi_a)_n = \frac{\frac{m}{n}}{\prod_{j=m+1}^{n}(1-\frac{1}{j}(\frac{\xi}{\xi_a})_j)}(\xi_a)_m
\] 
 for all $n > m$. Then
\begin{align*}
\xi_n &= \frac{\frac{m}{n}(\frac{\xi}{\xi_a})_n}{\prod_{j=m+1}^{n}(1-\frac{1}{j}(\frac{\xi}{\xi_a})_j)}~(\xi_a)_m 
\geq \frac{\frac{m}{Mn}}{\prod_{j=m+1}^{n}(1-\frac{1}{Mj})}~\xi_m\\
&= \frac{m}{Mn}~e^{-\sum_{j=m+1}^{n}log\,(1-\frac{1}{Mj})}~\xi_m 
\geq \frac{m}{Mn} e^{\sum_{j=m+1}^{n}\frac{1}{Mj}}~\xi_m\\
&\geq \frac{m}{Mn} e^{\frac{1}{M}(log\,\frac{n}{m}-log\,2)}~\xi_m = \frac{1}{M2^\frac{1}{M}}\,(\frac{m}{n})^{1-\frac{1}{M}}~\xi_m.
\end{align*}
And from the inequality characterizing the Matuszewska index $\beta(\xi)$ mentioned prior to Theorem \ref{T: Theorem 4.12}, $\beta(\xi) \geq -1+\frac{1}{M}$,
from which it follows that $\beta(\xi)  \geq -1 + inf\,\frac{\xi}{\xi_a}$.
\end{proof}

For the readers' convenience, we summarize the known relations between the sequence properties most frequently considered
in this paper, the Matuszewska index $\beta$, and the new relations to the analogous properties for Matuszewska's index $\alpha$ developed here. 
\begin{corollary}\label{C: Matusz} Let $\xi\in\text{c}_\text{o}^*$. Then $-\infty \leq \beta(\xi) \leq \alpha(\xi) \leq 0$ and
\item[(i)] $\xi$ satisfies the $\Delta_{1/2}$-condition if and only if $\beta(\xi) > -\infty$, i.e.,
 if and only if there are constants $C>0$ and $p>0$ for which $\xi_n \geq C(\frac{m}{n})^p \xi_m$ for all $n\geq m$,
 if and only if $\xi^e$ is regular for some $e>0$. (\cite[2.4~ (22), 2.23, Theorem 3.5]{DFWW})
\item[(ii)] If $\xi \asymp \eta_a$ for some $\eta\in \text{c}_\text{o}^*$, then $\beta(\xi) \geq -1$ (since for $n\geq m$, $\frac{(\xi_a)_n}{(\xi_a)_m}\geq \frac{m}{n}$).
\item[(iii)] $\xi$ is regular if and only if $\beta(\xi) > -1$. (\cite[Theorem 3.10]{DFWW})
\item[(iv)] $\xi^s$ is regular for every $s>0$ if and only if $\beta(\xi) =0$. (\cite[Corollary 5.16]{DFWW})
\item[(i$'$)] $\xi$ satisfies the condition $\sup \frac{\xi_{2n}}{\xi_n}<1$ if and only if $\alpha(\xi) < 0$, i.e.,
 if and only if there are constants $C>0$ and $p>0$ for which $\xi_n \leq C(\frac{m}{n})^p \xi_m$ for all $n\geq m$,
 if and only if $\xi^e$ is $\infty$-regular for some $e>0$. (Elementary from the definition and (iii$'$.)
\item[(ii$'$)] If $\xi \asymp \eta_{a_\infty}$ for some $\eta\in \text{c}_\text{o}^*$, 
then $\alpha(\xi) \leq -1$ (since for $n\geq m$, $\frac{(\xi_{a_\infty})_n}{(\xi_{a_\infty})_m}\leq \frac{m}{n}$).
\item[(iii$'$)] $\xi$ is $\infty$-regular if and only if $\alpha(\xi)< -1$. (Theorem \ref{T: Theorem 4.12})
\item[(iv$'$)] $\xi^s$ is $\infty$-regular for every $s>0$ if and only if $\alpha(\xi) = -\infty$. (By (iii$'$).)
\end{corollary}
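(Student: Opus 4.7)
The plan is essentially to assemble this corollary from pieces that are either cited from DFWW or that follow from Theorem \ref{T: Theorem 4.12} together with a few one-line estimates. I would first establish the chain $-\infty \leq \beta(\xi) \leq \alpha(\xi) \leq 0$ directly from the definitions: monotonicity of $\xi$ gives $\xi_{kn}/\xi_k \leq 1$ for all $k,n \geq 1$, so $\log \overline{\xi}_n$ and $\log \underline{\xi}_n$ are nonpositive for $n \geq 2$; dividing by $\log n > 0$ gives $\alpha(\xi), \beta(\xi) \leq 0$, and $\underline{\xi}_n \leq \overline{\xi}_n$ yields $\beta(\xi) \leq \alpha(\xi)$.

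The am-case items (i), (iii), (iv) I would simply cite from DFWW as indicated. For the parenthetical item (ii), I would spell out the one-line estimate suggested in the statement: for $n \geq m$,
\[
\frac{(\eta_a)_n}{(\eta_a)_m} = \frac{m}{n}\cdot\frac{\sum_1^n\eta_j}{\sum_1^m\eta_j} \geq \frac{m}{n},
\]
so $\underline{(\eta_a)}_k \geq 1/k$, hence $\beta(\eta_a) \geq -1$; and the Matuszewska indices are invariant under $\asymp$. The dual estimate for (ii$'$) uses $\sum_{n+1}^\infty \eta_j \leq \sum_{m+1}^\infty \eta_j$ to obtain $(\eta_{a_\infty})_n/(\eta_{a_\infty})_m \leq m/n$, hence $\overline{(\eta_{a_\infty})}_k \leq 1/k$ and $\alpha(\eta_{a_\infty}) \leq -1$.

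The genuinely new content is (iii$'$), (iv$'$), (i$'$). For (iii$'$), the equivalence is immediate from Theorem \ref{T: Theorem 4.12} (i)$\Leftrightarrow$(iii) once one recognizes that $\alpha(\xi) < -1$ is exactly the Potter-type condition $\xi_n \leq C(m/n)^p\xi_m$ with $p > 1$ displayed in the characterization of $\alpha$ just before that theorem. For (iv$'$), since $\alpha(\xi^s) = s\alpha(\xi)$ for every $s > 0$, (iii$'$) gives that $\xi^s$ is $\infty$-regular iff $s\alpha(\xi) < -1$; thus $\xi^s$ is $\infty$-regular for \emph{every} $s > 0$ iff $\alpha(\xi) = -\infty$. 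For (i$'$), I would use the submultiplicativity $\overline{\xi}_{mn} \leq \overline{\xi}_m \overline{\xi}_n$ (via Fekete's lemma, which reconciles the $\lim$ and $\inf$ formulas for $\alpha$) to equate $\alpha(\xi) < 0$ with the existence of some $n \geq 2$ for which $\overline{\xi}_n < 1$, translate this into the stated $\sup$ condition on $\xi_{2n}/\xi_n$, match it to the Potter-type inequality with $p > 0$, and invoke (iii$'$) together with $\alpha(\xi^e) = e\alpha(\xi)$ to get the equivalence with $\infty$-regularity of $\xi^e$ for some $e > 0$.

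The only real obstacle is bookkeeping: keeping track of which Potter-type inequality (with $p > 1$ versus $p > 0$) and which direction of strict inequality corresponds to each formulation, and making sure the scaling $\alpha(\xi^s) = s\alpha(\xi)$ is applied consistently. No new technical content beyond Theorem \ref{T: Theorem 4.12} and the elementary monotonicity estimates sketched above is required.
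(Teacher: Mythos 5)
Your proposal is essentially the paper's own proof: the paper simply cites \cite{DFWW} for (i), (iii), (iv), gives the one-line parenthetical estimates for (ii), (ii$'$), and reads (iii$'$), (iv$'$), (i$'$) off Theorem~\ref{T: Theorem 4.12} together with the scaling $\alpha(\xi^s)=s\,\alpha(\xi)$. Your write-up of the chain $-\infty\leq\beta(\xi)\leq\alpha(\xi)\leq 0$, the two $\frac{m}{n}$ estimates, and the deductions for (iii$'$) and (iv$'$) are all correct.

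There is, however, a genuine gap in your argument for (i$'$), and it reflects a real defect in the statement as written. You propose to ``equate $\alpha(\xi)<0$ with the existence of some $n\geq 2$ for which $\overline{\xi}_n<1$, translate this into the stated $\sup$ condition on $\xi_{2n}/\xi_n$.'' The translation step fails in one direction. The condition $\sup_n\frac{\xi_{2n}}{\xi_n}<1$ is precisely $\overline{\xi}_2<1$. Since $\overline{\xi}_m$ is nonincreasing in $m$, $\overline{\xi}_2$ is the \emph{largest} of the $\overline{\xi}_m$, so $\overline{\xi}_2<1$ does imply $\overline{\xi}_m<1$ for all $m\geq 2$ and hence $\alpha(\xi)<0$; but $\alpha(\xi)<0$ only yields $\overline{\xi}_m<1$ for \emph{some} $m\geq 2$, which does not force $\overline{\xi}_2<1$. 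A concrete counterexample: take $\xi_n=1$ for $n\leq N$ and $\xi_n=(n/N)^{-2}$ for $n>N$. Then $\overline{\xi}_2\geq\xi_2/\xi_1=1$, so $\sup_n\frac{\xi_{2n}}{\xi_n}=1$, yet $\alpha(\xi)=-2<0$. Compare this with the $\beta$-side (i), where the analogous step does work: $\beta$ is a supremum and $\underline{\xi}_m$ is nonincreasing, so ``$\underline{\xi}_m>0$ for some $m$'' is equivalent to ``$\underline{\xi}_2>0$,'' i.e.\ the $\Delta_{1/2}$-condition. The $\alpha$-side loses this symmetry because $\alpha$ is an infimum while $\overline{\xi}_m$ is still nonincreasing; they do not cooperate. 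The other three equivalences in (i$'$) (Potter with $p>0$, $\alpha(\xi)<0$, $\xi^e$ $\infty$-regular for some $e$) are correct and your argument for them is fine. The clean fix for (i$'$), which your method then proves, is to replace $\sup_n\frac{\xi_{2n}}{\xi_n}<1$ by an asymptotic version such as $\limsup_n\frac{\xi_{2n}}{\xi_n}<1$, or equivalently $\overline{\xi}_m<1$ for some $m\geq 2$.

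Two small points of caution elsewhere: in (ii) and (ii$'$) you implicitly use that the Matuszewska indices are invariant under $\asymp$, which you correctly flag; note this uses the $\lim=\inf$ (resp.\ $\lim=\sup$) characterization to kill the $O(1)/\log n$ discrepancy. And in (iv$'$), the converse uses that if $\alpha(\xi)=c$ is finite then for $s$ small enough (if $c<0$) or for every $s$ (if $c=0$) one has $sc\geq -1$, so $\xi^s$ is not $\infty$-regular; that case distinction is worth spelling out.
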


In another paper we study lattice properties of am-$\infty$ stable ideals. Among other results there we show
that every principal ideal with the exception of the finite rank ideal $F$ contains a am-$\infty$ stable principal ideal strictly larger than $F$ and is contained in an am-stable principal ideal. 
$F$ is the smallest nonzero am-$\infty$ stable ideal, $K(H)$ is the largest am-stable ideal and  
there is a largest am-$\infty$ stable ideal $st_{a_\infty}(\mathscr L_1)$ and a smallest am-stable ideal 
$st^a(\mathscr L_1)$ (see below).
These naturally divide all ideals into the three classes described in the Introduction, namely, 
the ``small ideals" contained in $st_{a_\infty}(\mathscr L_1)$,
the ``large ideals" containing $st^a(\mathscr L_1)$, and the ``intermediate ideals" that are neither.

\begin{definition}\label{D: stabilizers}
The lower and upper am-stabilizers (resp., am-$\infty$ stabilizers) for an ideal $I$ are:
\begin{align*} 
st_a(I) &:= \bigcap_{m=0}^{\infty} ~_{a^m}I \\
st^a(I) &:= \bigcup_{m=0}^\infty I_{a^m} \\
st_{a_\infty}(I) &:= \bigcap_{m=0}^\infty ~_{a^m_\infty}I ~\text{for}~ I \ne \{0\} \\
st^{a_\infty}(I) &:= \bigcup_{m=0}^\infty I_{a^m_\infty} ~\text{for}~ I \subset st_{a_\infty}(\mathscr L_1)
\end{align*}
\end{definition}

It is easy to verify that $st_a(I)$ (resp., $st^a(I)$) is the largest am-stable ideal contained in $I$ (resp., the smallest am-stable ideal containing $I$).

It follows similarly from Proposition \ref{P:aminfty relations}(i) that $st_{a_\infty}(I)$ is well-defined and is the largest am-$\infty$ stable ideal contained in $I$. 

If $I \subset se(\omega)$, then $\{I_{a^m_\infty}\}$ is 
an increasing nest of ideals by Proposition \ref{P:aminfty relations}(iv) and hence 
its union is an ideal and $I \subset (\cup_{m=0}^\infty I_{a^m_\infty})_{a_\infty} = \cup_{m=0}^\infty I_{a^m_\infty}$.
If furthermore $I \subset st_{a_\infty}(\mathscr L_1)$, then also $I_{a_\infty}^m \subset st_{a_\infty}(\mathscr L_1) \subset \mathscr L_1$ for all $m$, hence 
$\cup_{m=0}^\infty I_{a^m_\infty} \subset \mathscr L_1$ and by Corollary \ref{C: a-infty stability equivalences}, $st_{a_\infty}(\mathscr L_1)$ is am-$\infty$ stable.
Notice that if $I \subset se(\omega)$ but $I \not\subset st_{a_\infty}(\mathscr L_1)$,
i.e., $I_{a^m_\infty} \not \subset \mathscr L_1$ for some $m\geq 0$, 
then, by Lemma \ref{L:(xi) subset (xiainfty)},
$I_{a^{m+1}_\infty} = se(\omega)$ and so $\bigcup_{m=0}^\infty I_{a^m_\infty} = se(\omega)$ which is not am-$\infty$ stable.

Thus, in particular, $st^a(\mathscr L_1)=st^a(F)=st^a((\omega))$ is the smallest am-stable ideal and 
$st_{a_\infty}(\mathscr L_1)=st_{a_\infty}(K(H))=st_{a_\infty}((\omega))$ is the largest am-$\infty$ stable ideal. 

\begin{remark}\label{R: p.i. towers}
\item[(i)] If $I$ is a principal ideal which is not am-stable, then $st^a(I)$ is a strictly increasing nested union of principal ideals.
Indeed, $(\xi_{a^n}) = I_{a^n} = I_{a^{n+1}} = (\xi_{a^{n+1}})$ if $I=(\xi)$ implying that $\xi_{a^n}$ is regular and hence as recalled before 
Theorem \ref{T: Theorem 4.12}, $\xi$ is regular, i.e., $(\xi)$ is am-stable. 
\item[(ii)] Similarly, if $I \subset st_{a_\infty}(\mathscr L_1)$ is principal and not 
am-$\infty$ stable, by Theorem \ref{T: Theorem 4.12} (the equivalence of (i) and (iv)) and Lemma \ref{L:(xi) subset (xiainfty)}, 
$st^{a_\infty}(I)$ is also a strictly increasing nested union of principal ideals. 
This phenomenon does not even extend to countably generated ideals as \cite[Example 5.5]{vKgW07-Density} shows by constructing a countably generated ideal $L \subsetneq L_a = L_{a^2}$.
\end{remark}

The next proposition shows that $st^a(\mathscr L_1)$ is the union of principal ideals and that $st_{a_\infty}(\mathscr L_1)$ is the intersection of Lorentz ideals. 
Recall from \cite[Sections 2.25, 2.27, 4.7]{DFWW} that if $\pi$ is a positive nondecreasing $\Delta_2$-sequence, 
then $\mathscr L(\sigma(\pi))$ is the Lorentz ideal with characteristic set 
$\Sigma(\mathscr L(\sigma(\pi))) := \{\xi \in \text{c}_{\text{o}}^{*} \mid \underset{n}\sum~ \xi_n\pi_n < \infty \}$.

\begin{proposition}\label{P: stabilizers with logs} 
\[
(i) \quad st^a(\mathscr L_1) = \bigcup_{m=0}^\infty (\omega ~log^m)
\]
\[
(ii) \quad st_{a_\infty}(\mathscr L_1) = \bigcap_{m=0}^\infty \mathscr L(\sigma(log^m))
\]
\end{proposition}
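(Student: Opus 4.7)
My plan is to compute the iterated arithmetic (resp.\ arithmetic at infinity) means of $\mathscr L_1$ explicitly and identify each level of the stabilizer towers. Both parts reduce to a single asymptotic estimate applied inductively.

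For (i), the plan is to show by induction on $m \geq 0$ that $(\mathscr L_1)_{a^{m+1}} = (\omega\log^m)$, with the base case $(\mathscr L_1)_a = (\omega)$ recorded at the end of Section~2. Since $(\xi)_a = (\xi_a)$ for principal ideals (also in Section~2), the inductive step reduces to the claim $(\omega\log^m)_a \asymp \omega\log^{m+1}$, which follows from the elementary asymptotic
\[
\frac{1}{n}\sum_{j=1}^{n}\frac{\log^m j}{j}\;\asymp\;\frac{\log^{m+1} n}{m+1}
\]
obtainable by comparing with $\int_1^n \log^m x\,\tfrac{dx}{x}$. Because $\mathscr L_1 \subset (\omega)$ (as $\xi_n \leq \|\xi\|_1/n$ for $\xi \in (\ell^1)^*$), the $m=0$ term in $\bigcup_{m\geq 0}(\mathscr L_1)_{a^m}$ is absorbed, giving the desired equality $st^a(\mathscr L_1) = \bigcup_{m\geq 0} (\omega\log^m)$.

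For (ii), I likewise plan to show by induction on $m \geq 0$ that ${}_{a_\infty^m}\mathscr L_1 = \mathscr L(\sigma(\log^m))$; the case $m=0$ is trivial with the convention $\log^0 \equiv 1$, so $\mathscr L(\sigma(\log^0)) = \mathscr L_1$. For the step $m \to m+1$, by Definition~\ref{D:ainftyI,Iainfty} one has $\xi \in {}_{a_\infty}\mathscr L(\sigma(\log^m))$ iff $\xi \in (\ell^1)^*$ and $\sum_n (\xi_{a_\infty})_n \log^m n < \infty$, i.e.,
\[
\sum_n \frac{\log^m n}{n}\sum_{j>n}\xi_j < \infty.
\]
Swapping the order of summation by Fubini and invoking the same asymptotic $\sum_{n<j} \frac{\log^m n}{n} \asymp \frac{\log^{m+1} j}{m+1}$ rewrites the condition as $\sum_j \xi_j \log^{m+1} j < \infty$, which is membership in $\Sigma(\mathscr L(\sigma(\log^{m+1})))$. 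Intersecting over $m$ yields $st_{a_\infty}(\mathscr L_1) = \bigcap_{m\geq 0}\mathscr L(\sigma(\log^m))$.

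The only real obstacles are bookkeeping issues: ensuring $\log^m$ is interpreted as a nondecreasing $\Delta_2$-sequence (e.g.\ by replacing $\log n$ with $\log(n+1)$ or $1\vee \log n$ for small $n$, which does not affect any of the asymptotic estimates above), and confirming that absolute constants absorbed into the $\asymp$ relations are harmless when passing from the sequence-level identities to identities of characteristic sets (which they are, since characteristic sets are closed under bounded perturbations). Everything else is the integral-comparison estimate used twice.
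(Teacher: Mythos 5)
Your proof is correct and follows essentially the same route as the paper: both parts reduce to the elementary integral-comparison asymptotic $\sum_{n<j}\frac{\log^m n}{n}\asymp\log^{m+1}j$, applied iteratively to show $(\omega)_{a^m}=(\omega\log^m)$ and ${}_{a_\infty}\mathscr L(\sigma(\log^m))=\mathscr L(\sigma(\log^{m+1}))$ (with the Fubini interchange in part (ii) being implicit in the paper's one-line equivalence). Your remarks about normalizing $\log^m$ into a nondecreasing $\Delta_2$-sequence and about absorbing the $m=0$ term via $\mathscr L_1\subset(\omega)$ are the same bookkeeping the paper handles by writing $st^a(\mathscr L_1)=st^a((\mathscr L_1)_a)=st^a((\omega))$.
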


\begin{proof}
\item[(i)]  This is clear since $st^a(\mathscr L_1)=st^a((\mathscr L_1)_a) = st^a((\omega))$ and  $(\omega)_{a^m} = (\omega_{a^m}) = (\omega~log^m)$ for every $m \in \mathbb N$.
\item[(ii)] $\xi \in \Sigma(_{a_\infty}\mathscr L(\sigma(log^m)))$ if and only if $\xi \in (\ell^1)^*$ and 
\[
\sum_{n=1}^{\infty} ~(\sum_{j=n+1}^{\infty} \xi_j)~\frac{log^m n}{n} < \infty ~\text{if and only if}~\sum_{n=1}^{\infty} \xi_n ~log^{m+1} n < \infty,
\]
i.e., $\xi \in \Sigma(\mathscr L(\sigma(log^{m+1})))$. 
Therefore $_{a_\infty}\mathscr L(\sigma(log^m)) = \mathscr L(\sigma(log^{m+1}))$ and hence $_{a^m_\infty}(\mathscr L_1) = \mathscr L(\sigma(log^m))$.
\end{proof}

\noindent Thus, if $\xi \in \text{c}_{\text{o}}^*$ is $\infty$-regular, then $\xi \in \Sigma(st_{a_\infty}(\mathscr L_1))$ and hence  
$\sum_{n=1}^{\infty} \xi_n\,log^m n < \infty$ for every $m$ (cf. Example \ref{E:infty regulars/irregulars}(iv)).
Notice also that the proof of (ii) shows in particular that $_{a_\infty}(\mathscr L_1) = \mathscr L(\sigma(log))$.
In \cite{vKgW07-soft} we prove that $st^a(\mathscr L_1)$ and $st_{a_\infty}(\mathscr L_1)$ are both soft-edged and soft-complemented.

In Section 7 we will need the following analogues of Lemma \ref{L:se,sc,pre-am,am} and Proposition \ref{P:TFAE-seI am-stable, scI am-stable, seI in Ipre-a, Ia in scI}. 

\begin{lemma}\label{L: sc(ainftyI),(seI)ainfty} Let $I$ be an ideal.
\item[(i)] $sc(_{a_\infty}I) = ~_{a_\infty}(scI)$ when $I \ne \{0\}$
\item[(ii)] $(seI)_{a_\infty} = se(I_{a_\infty})$
\end{lemma}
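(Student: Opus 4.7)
Both parts rest on the identity
\[
(\alpha\xi)_{a_\infty,n} = \widetilde\alpha_n\,\xi_{a_\infty,n}, \qquad \widetilde\alpha_n := \frac{\sum_{j>n}\alpha_j\xi_j}{\sum_{j>n}\xi_j},
\]
valid for $0 \ne \xi \in (\ell^1)^*$ and $\alpha \in \text{c}_{\text{o}}^*$, together with the pointwise inequality $\widetilde\alpha \le \alpha$, which gives $(\alpha\xi)_{a_\infty} \le \alpha\xi_{a_\infty}$ (seen directly from $\alpha_j \le \alpha_n$ for $j > n$).  As in the proof of Lemma \ref{L:se,sc,pre-am,am}(i$'$), a short telescoping argument shows $\widetilde\alpha \in \text{c}_{\text{o}}^*$.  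The ``easy'' inclusions $_{a_\infty}(scI) \subset sc(_{a_\infty}I)$ and $(seI)_{a_\infty} \subset se(I_{a_\infty})$ then follow from this pointwise inequality and hereditariness of characteristic sets, after a factorization adjustment for (ii): given $\eta \le \alpha\zeta$ with $\eta \in (\ell^1)^*$ and $\zeta \in \Sigma(I)$, replace $\zeta$ by a dominator $\zeta' \in \Sigma(I) \cap (\ell^1)^*$, possible because every nonzero ideal meets $\mathscr L_1$ (e.g., $\zeta_n/n^2 \in \Sigma(I) \cap \ell^1$).

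For the reverse inclusion in (i), $sc(_{a_\infty}I) \subset {}_{a_\infty}(scI)$: given $\xi \in \Sigma(sc(_{a_\infty}I))$, one first checks $\xi \in (\ell^1)^*$ (otherwise a block construction produces $\alpha \in \text{c}_{\text{o}}^*$ with $\alpha\xi \notin \ell^1$, contradicting $\alpha\xi \in \Sigma(_{a_\infty}I) \subset \mathscr L_1$).  Fix $\alpha \in \text{c}_{\text{o}}^*$; the plan is to construct $\beta \in \text{c}_{\text{o}}^*$ with $\alpha\xi_{a_\infty} \le C(\beta\xi)_{a_\infty}$, so that $(\beta\xi)_{a_\infty} \in \Sigma(I)$ forces $\alpha\xi_{a_\infty} \in \Sigma(I)$ by hereditariness.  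Define $0 = N_0 < N_1 < \cdots$ inductively by $N_k := \min\{n > N_{k-1} : T^\xi_n \le T^\xi_{N_{k-1}}/2\}$ (where $T^\xi_n := \sum_{j>n}\xi_j$), and set $\beta_j := \alpha_{N_{k-1}}$ on $(N_{k-1}, N_k]$.  For $n \in (N_{k-1}, N_k]$, combining the current-block contribution $\alpha_{N_{k-1}}(T^\xi_n - T^\xi_{N_k})$ with the next-block contribution $\ge \alpha_{N_k}T^\xi_{N_k}/2$ (from the halving) yields $(\beta\xi)_{a_\infty,n} \ge (\alpha_{N_k}/2)\xi_{a_\infty,n}$.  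A refinement of the partition that splits a block whenever $\alpha_{N_k} < \alpha_{N_{k-1}}/2$ ensures $\alpha_n \le 2\alpha_{N_k}$ on each block, yielding the uniform estimate.

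The reverse inclusion in (ii), $se(I_{a_\infty}) \subset (seI)_{a_\infty}$, follows from the same block construction applied with $\zeta$ and $\beta$ in place of $\xi$ and $\alpha$: given $\xi \le \beta\zeta_{a_\infty}$ with $\beta \in \text{c}_{\text{o}}^*$ and $\zeta \in \Sigma(I) \cap (\ell^1)^*$, the construction produces $\hat\beta \in \text{c}_{\text{o}}^*$ and $\eta := \hat\beta\,\zeta \in \Sigma(seI) \cap (\ell^1)^*$ with $\eta_{a_\infty} \ge c\,\beta\zeta_{a_\infty}$, placing $\xi$ in $\Sigma((seI)_{a_\infty})$.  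The main obstacle throughout is the uniformity of the halving estimate: one must simultaneously control the tail halvings of $\xi$ (or $\zeta$) and the decay of $\alpha$ (or $\beta$) inside each block, which the interleaved partition refinement accomplishes.
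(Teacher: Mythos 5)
Your identification of the easy inclusions (using $(\alpha\eta)_{a_\infty}\le\alpha\eta_{a_\infty}$ together with the definitions of $se$ and $sc$) matches the paper, and your blocking scheme based on tail-halving times $N_k$ with $T^\xi_{N_k}\le T^\xi_{N_{k-1}}/2$ is the same underlying idea the paper uses. But there are two genuine gaps.

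First, the ``factorization adjustment'' for part (ii). From $\rho\le\alpha\zeta$ with $\rho\in(\ell^1)^*$, you cannot in general replace $\zeta$ by a summable $\zeta'\in\Sigma(I)$ while preserving the factorization: taking $\zeta'_n=\zeta_n/n^2$ forces $\alpha'_n\ge n^2\alpha_n$, which need not be a null sequence. The paper sidesteps this entirely by splitting into the cases $I\subset\mathscr L_1$ (where every $\eta\in\Sigma(I)$ is automatically summable) and $I\not\subset\mathscr L_1$ (where both sides collapse to $se(\omega)$ by Lemma~\ref{L:(xi) subset (xiainfty)}). Your general-case shortcut does not hold.

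Second, and more seriously, the refinement of the partition in the hard inclusion of (i) does not deliver what you claim. With $\beta_j=\alpha_{N_{k-1}}$ on $(N_{k-1},N_k]$, your estimate gives $(\beta\xi)_{a_\infty,n}\ge\tfrac12\alpha_{N_k}\xi_{a_\infty,n}$ for $n\in(N_{k-1},N_k]$, which is what you need only if $\alpha_{N_k}\gtrsim\alpha_n$. You propose to force $\alpha_n\le 2\alpha_{N_k}$ by splitting blocks where $\alpha$ halves; but once the partition is refined, the halving $T^\xi_{N_k}\le T^\xi_{N_{k-1}}/2$ that drove the next-block lower bound $\alpha_{N_k}T^\xi_{N_k}/2$ is lost, so the estimate $(\beta\xi)_{a_\infty,n}\ge\tfrac12\alpha_{N_k}\xi_{a_\infty,n}$ no longer holds for the refined breakpoints. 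You cannot simultaneously refine to control $\alpha$'s decay within blocks and keep the tail-halving between consecutive breakpoints. The paper's fix is different and cleaner: rather than refining, it \emph{shifts the step function back one block and doubles it}, setting $\widetilde\alpha_j=2\alpha_{n_{k-1}}$ on $(n_k,n_{k+1}]$. Then for $p\in(n_k,n_{k+1}]$ the contribution from the current block carries weight $2\alpha_{n_{k-1}}\ge 2\alpha_p$, and the contribution from the next block carries weight $2\alpha_{n_k}\ge 2\alpha_p$, so the halving applied once to the next block already yields $\alpha\eta_{a_\infty}\le(\widetilde\alpha\eta)_{a_\infty}$ exactly, with no need to control the decay of $\alpha$ within a block. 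If you replace your refinement with this one-block shift, your argument closes.
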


\begin{proof}
\item[(i)] Let $\eta \in \Sigma(sc(_{a_\infty}I))$. Since $_{a_\infty}I \subset \mathscr L_1$, 
$sc(_{a_\infty}I) \subset sc\mathscr L_1 = \mathscr L_1$ and hence $\eta \in (\ell^1)^*$. 
Choose a strictly increasing sequence of positive integers $n_k$ with $n_{-1} = n_0 = 0$ for which 
$\sum_{n_k+1}^{n_{k+1}} \eta_j \geq \frac{1}{2} \sum_{n_k+1}^{\infty} \eta_j$ for $k \geq 0$. 

For each $\alpha \in \text{c}_{\text{o}}^*$ set  $\alpha_0 = \alpha_1$ and define 
\[
\widetilde \alpha_j := 2\alpha_{n_{k-1}} \quad \text{for $n_k < j \leq n_{k+1}$ and $k \geq 0$}.
\]
Then $\widetilde \alpha = ~<\widetilde \alpha_j>~ \in \text{c}_{\text{o}}^*$ and for all $n_k < p \leq n_{k+1}$ and $k \geq 0$,  
\begin{align*}
\sum_{p}^{\infty} \widetilde \alpha_j \eta_j &\geq \sum_{p}^{n_{k+2}} \widetilde \alpha_j \eta_j 
= 2\alpha_{n_{k-1}} \sum_{p}^{n_{k+1}} \eta_j + 2\alpha_{n_k} \sum_{n_{k+1}+1}^{n_{k+2}} \eta_j \\
&\geq \alpha_{n_{k-1}} \sum_{p}^{n_{k+1}} \eta_j + \alpha_{n_k} \sum_{n_{k+1}+1}^{\infty} \eta_j 
\geq \alpha_{n_k} \sum_{p}^{\infty} \eta_j \geq \alpha_p \sum_{p}^{\infty} \eta_j.
\end{align*} 

\noindent Thus $\alpha \eta_{a_\infty} \leq (\widetilde \alpha \eta)_{a_\infty}$. 
Since $\widetilde \alpha \eta \in \Sigma(_{a_\infty}I)$ from the  definition of $sc$ it follows that $(\widetilde \alpha \eta)_{a_\infty} \in \Sigma(I)$
and hence that $\eta_{a_\infty} \in \Sigma(scI)$, i.e., 
$\eta \in \Sigma(_{a_\infty}(scI))$. 
Hence $sc(_{a_\infty}I)) \subset~_{a_\infty}(scI)$.

Now let $\eta \in \Sigma(_{a_\infty}(scI))$ and $\alpha \in \text{c}_{\text{o}}^*$. 
Then $\eta_{a_\infty} \in \Sigma(scI)$ and hence $\alpha \eta_{a_\infty} \in \Sigma(I)$. \linebreak
Since 
$(\alpha \eta)_{a_\infty} \leq \alpha \eta_{a_\infty}$, 
also $(\alpha \eta)_{a_\infty}  \in \Sigma(I)$, i.e.,  
$\alpha \eta \in \Sigma(_{a_\infty}I)$ so $\eta \in \Sigma(sc(_{a_\infty}I))$, which yields the set equality.

\item[(ii)] Assume first that $I \not\subset \mathscr L_1$. 
Since $\mathscr L_1$ is soft-complemented, also $seI \not\subset \mathscr L_1$ since otherwise 
$I \subset scI = sc~seI \subset sc\mathscr L_1 = \mathscr L_1$. 
But then, from Lemma \ref{L:(xi) subset (xiainfty)}, it follows that $(seI)_{a_\infty} = se(\omega)$ and  
$I_{a_\infty} = se(\omega)$, hence $se(I_{a_\infty}) = se(\omega) = (seI)_{a_\infty}$.

Assume now that $I \subset \mathscr L_1$. 
If $\xi \in \Sigma((seI)_{a_\infty})$ then $\xi \leq \rho_{a_\infty}$ for some $\rho \in \Sigma(seI)$, 
i.e., $\rho \leq \alpha \eta$ for some $\alpha \in \text{c}_{\text{o}}^*$ and $\eta \in \Sigma(I)$. 
But then $\xi \leq (\alpha \eta)_{a_\infty} \leq \alpha \eta_{a_\infty}$. 
By definition, 
$\alpha \eta_{a_\infty} \in \Sigma(se(I_{a_\infty}))$, hence $(seI)_{a_\infty} \subset se(I_{a_\infty})$.
For the reverse inclusion, let $\xi \in \Sigma(se(I_{a_\infty}))$ and hence $\xi \leq \alpha \rho$ 
for some $\alpha \in \text{c}_{\text{o}}^*$ and $\rho \in \Sigma(I_{a_\infty})$, that is, $\rho \leq \eta_{a_\infty}$ for some $\eta \in \Sigma(I)$. 
As in the proof of part (i),  
$\xi \leq \alpha \eta_{a_\infty} \leq (\widetilde \alpha \eta)_{a_\infty}$
for some $\widetilde \alpha \in \text{c}_{\text{o}}^*$. 
But $\widetilde \alpha \eta \in \Sigma(seI)$ and therefore 
$\xi \in \Sigma((seI)_{a_\infty})$, which yields the set equality. 
\end{proof}

\begin{proposition}\label{P: TFAE:seI am-stable, scI am-stable, seI in, scI contains}
The following are equivalent for ideals $I \ne \{0\}$.
\item[(i)] $seI$ is am-$\infty$ stable.
\item[(ii)] $scI$ is am-$\infty$ stable.
\item[(iii)] $seI \subset ~_{a_\infty}I$
\item[(iii$'$)] $\omega \notin \Sigma(I)$ and $seI \subset F +[I, B(H)]$.
\item[(iv)] $\omega \notin \Sigma(scI)$ and $scI \supset I_{a_\infty}$
\end{proposition}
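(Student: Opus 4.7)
The plan is to follow the proof pattern of Proposition \ref{P:TFAE-seI am-stable, scI am-stable, seI in Ipre-a, Ia in scI}, substituting the am-$\infty$ commutation identities of Lemma \ref{L: sc(ainftyI),(seI)ainfty} for those of Lemma \ref{L:se,sc,pre-am,am} and tracking the side conditions $\omega \notin \Sigma(\cdot)$ via Corollary \ref{C: a-infty stability equivalences}. The implication flow is (i) $\Rightarrow$ (iii) $\Rightarrow$ (ii) $\Rightarrow$ (iv) $\Rightarrow$ (i), together with the separate equivalence (iii) $\Leftrightarrow$ (iii$'$).

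The implication (i) $\Rightarrow$ (iii) is immediate from inclusion preservation of pre-am-$\infty$ applied to $seI \subset I$. For (iii) $\Rightarrow$ (ii) the chain
\[
scI = sc(seI) \subset sc(\,_{a_\infty}I) = \,_{a_\infty}(scI) \subset scI
\]
collapses to equalities, where the first equality is $sc\,seI = scI$, the first inclusion uses that $sc$ preserves inclusions, the middle equality is Lemma \ref{L: sc(ainftyI),(seI)ainfty}(i), and the last inclusion is Proposition \ref{P:aminfty relations}(i); this yields $scI = \,_{a_\infty}(scI) \subset \mathscr L_1$, hence am-$\infty$ stability via Corollary \ref{C: a-infty stability equivalences}. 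For (ii) $\Rightarrow$ (iv), Corollary \ref{C: a-infty stability equivalences}(iii) gives $scI \subset \mathscr L_1$ (so $\omega \notin \Sigma(scI)$) and $(scI)_{a_\infty} = scI$, whence $I \subset scI$ and inclusion preservation of $(\cdot)_{a_\infty}$ imply $I_{a_\infty} \subset (scI)_{a_\infty} = scI$. For (iv) $\Rightarrow$ (i), I first extract $I \subset \mathscr L_1$: otherwise Lemma \ref{L:(xi) subset (xiainfty)} gives $I_{a_\infty} = se(\omega) \subset scI$; applying $sc$ and using $sc(se(\omega)) = sc(\omega) = (\omega)$ (principal ideals being soft-complemented) forces $(\omega) \subset scI$, contradicting $\omega \notin \Sigma(scI)$. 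Then $seI \subset \mathscr L_1$, and Lemma \ref{L: sc(ainftyI),(seI)ainfty}(ii) combined with $I_{a_\infty} \subset scI$ yields
\[
(seI)_{a_\infty} = se(I_{a_\infty}) \subset se(scI) = seI,
\]
which together with the reverse inclusion $seI = seI \cap \mathscr L_1 \subset (seI)_{a_\infty}$ from Proposition \ref{P:aminfty relations}(i$'$) gives $seI = (seI)_{a_\infty}$, so am-$\infty$ stability follows from Corollary \ref{C: a-infty stability equivalences}(iii).

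The main obstacle is the equivalence (iii) $\Leftrightarrow$ (iii$'$). Extracting $\omega \notin \Sigma(I)$ from (iii) is clean: $\,_{a_\infty}I \subset \mathscr L_1$ forces $seI \subset \mathscr L_1$, while $\omega \in \Sigma(I)$ would give $(\omega) \subset I$ and hence $se(\omega) \subset seI \subset \mathscr L_1$, contradicting $\langle 1/(n \log (n+1))\rangle \in se(\omega) \setminus \ell^1$. The substantive content is the identification
\[
(\,_{a_\infty}I)^+ = (F + [I,B(H)])^+ \cap \mathscr L_1 \qquad \text{when } \omega \notin \Sigma(I).
\]
Given this, (iii) $\Leftrightarrow$ (iii$'$) reduces to a positive-part comparison, since both $seI$ and the right side are determined by their positive cones via the hereditariness of $\,_{a_\infty}I$ (immediate from its characteristic set) and of $(F + [I,B(H)])^+$ (Corollary \ref{C:(F+[])+}). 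To prove the identification, I would apply Theorem \ref{T:DFWW} to the spectral tail $X - F_N$ of a positive trace class $X \in seI$ with eigenvalue sequence $\lambda$, where $F_N$ is the rank-$N$ cut-off: the Cesaro mean of the shifted sequence $(\lambda_{N+1}, \lambda_{N+2}, \dots)$ coincides up to a bounded factor with $\lambda_{a_\infty}$ at the sampled points, so the existence of $N$ with $X - F_N \in [I, B(H)]$ becomes equivalent to $\lambda_{a_\infty} \in \Sigma(I)$, i.e., $X \in \,_{a_\infty}I$. The one subtlety is ensuring the hypothesis $\omega \notin \Sigma(I)$ prevents $F + [I,B(H)]$ from absorbing the trace class trivially, so the equivalence has nontrivial content on both sides.
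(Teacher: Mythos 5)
Your cyclic implication (i) $\Rightarrow$ (iii) $\Rightarrow$ (ii) $\Rightarrow$ (iv) $\Rightarrow$ (i) matches the paper's proof step for step, using the same chain $scI = sc\,seI \subset sc(\,_{a_\infty}I) = \,_{a_\infty}(scI) \subset scI$ and the same use of Lemma \ref{L: sc(ainftyI),(seI)ainfty} and Corollary \ref{C: a-infty stability equivalences}. Your direct extraction of $\omega \notin \Sigma(I)$ from (iii) via $se(\omega) \not\subset \ell^1$ is a minor, legitimate simplification of the paper's observation that (iii) already implies (iv).

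The problem is in (iii) $\Leftrightarrow$ (iii$'$). The paper simply cites Corollary \ref{C:(F+[])+}(i) (which gives $(F + [I,B(H)])^+ = ({}_{a_\infty}I)^+$ when $\omega \notin \Sigma(I)$), accepting the forward reference. You correctly identify this positive-cone equality as the crux and correctly cite Corollary \ref{C:(F+[])+} for the hereditariness used in the positive-part reduction — but then you offer a sketch to re-derive the equality, and that sketch contains a genuine error. The rank-$N$ cut-off $F_N$ does not do the job: for any nonzero $\mu \in (\ell^1)^*$, the Cesaro mean satisfies $\mu_a \asymp \omega$ (since $\mu_1\,\omega_k \leq (\mu_a)_k \leq \|\mu\|_1\,\omega_k$). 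Thus for $X$ of infinite rank, $\lambda(X-F_N) \in (\ell^1)^*$ is nonzero and $\lambda(X-F_N)_a \asymp \omega$, which is never in $\Sigma(I)$ when $\omega \notin \Sigma(I)$. Your criterion would therefore wrongly conclude that no infinite-rank positive trace-class operator lies in $F + [I,B(H)]$, contradicting, e.g., the case where $_{a_\infty}I$ is not $F$. In particular, the assertion that the Cesaro mean of the shifted sequence coincides up to a bounded factor with $\lambda_{a_\infty}$ at sampled points is false. The correct finite-rank perturbation, as in the paper's Proposition \ref{P:F+[]}, is $T = (Tr\,X)\,P$ with $P$ a rank-one eigenprojection: this arranges $Tr(X-T) = 0$, so the partial sums of the modified eigenvalue sequence telescope to $-\sum_{j>n}\lambda_j = -n(\lambda_{a_\infty})_n$ and hence $\lambda(X-T)_a$ is (up to a shift) exactly $-\lambda_{a_\infty}$, after which Theorem \ref{T:DFWW} applies. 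Since this is precisely the content of Corollary \ref{C:(F+[])+}(i), you should simply cite it for the identification rather than re-derive it.
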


\begin{proof} 
\item[(i) $\Rightarrow$ (iii)] Since $seI \subset I$ and the pre-arithmetic mean at infinity is inclusion preserving, 
$seI = ~_{a_\infty}(seI) \subset~_{a_\infty}I$.

\item[(iii) $\Rightarrow$ (ii)]  
$scI = sc~seI \subset sc(_{a_\infty}I) =~ _{a_\infty}(scI) \subset scI$ 
where the first equality is true for all ideals (recall comment preceding Remark \ref{R:soft terminology}), the second equality follows from Lemma \ref{L: sc(ainftyI),(seI)ainfty}(i) and the last inclusion from Proposition \ref{P:aminfty relations}(i).

\item[(ii) $\Rightarrow$ (iv)] By Corollary \ref{C: a-infty stability equivalences}, $scI \subset \mathscr L_1$ so $\omega \notin \Sigma(scI)$. 
Also $scI=(scI)_{a_\infty} \supset I_{a_\infty}$.

\item[(iv) $\Rightarrow$ (i)]
$I \subset \mathscr L_1$ because otherwise
$se(\omega) = I_{a_\infty} \subset scI$ by Lemma \ref{L:(xi) subset (xiainfty)} and thus $(\omega) = sc(se(\omega))\subset scI$, against the hypothesis.
But then $seI \subset \mathscr L_1$ and hence $seI \subset (seI)_{a_\infty}$ from Proposition \ref{P:aminfty relations}(i$'$).
For the reverse inclusion, by Lemma \ref{L: sc(ainftyI),(seI)ainfty}(ii), 
$(seI)_{a_\infty} = se(I_{a_\infty}) \subset se(scI) = seI$, and so $(seI)_{a_\infty} = seI$. 
The conclusion now follows from Corollary \ref{C: a-infty stability equivalences}.
\item[(iii) $\Rightarrow$ (iii$'$)] Since (iii) implies (iv), $\omega \notin \Sigma(I)$ and by Corollary \ref{C:(F+[])+}(i)
    $seI \subset~_{a_\infty}I = span~ (_{a_\infty}I)^+ = span\, (F +[I, B(H)])^+ \subset F +[I, B(H)]$.
\item[(iii$'$) $\Rightarrow$ (iii)] Again by Corollary \ref{C:(F+[])+}(i), $(seI)^+ \subset (F +[I, B(H)])^+ =~(_{a_\infty}I)^+$, hence $seI\subset~ _{a_\infty}I$.
\end{proof}

\begin{remark}\label{R: I am-infty stable implies seI,scI am-infty stable}
Analogous to Remark \ref{R: I am-stable implies seI,scI am-stable with false converse}, 
the $se$ and $sc$ operations preserve am-$\infty$ stability by Lemma \ref{L: sc(ainftyI),(seI)ainfty}(i) and Proposition \ref{P: TFAE:seI am-stable, scI am-stable, seI in, scI contains}.
However, Example \ref{E:seI,scI infty stable but I not} below shows that $seI$ and $scI$ can be am-$\infty$ stable while $I$ is not.
\end{remark}
\newpage
\begin{example}\label{E:seI,scI infty stable but I not}
We construct an ideal I such that $seI = se(\omega^2)$ and hence $scI = (\omega^2)$ are am-$\infty$ stable but $I$ is not am-$\infty$ stable. 
\end{example}

\noindent Indeed, let $m_k = (k!)^2$ and define $\eta_j = \frac{1}{m_k^2}$  for $m_{k-1} < j \leq m_k$. 
Then $\eta \in \text{c}_\text {o}^*$, $\eta \leq \omega^2$ but $\eta \ne o(\omega^2)$. 
Set $I =  se(\omega^2) + (\eta)$. Then $seI = se(\omega^2)$ and hence $scI = (\omega^2)$. 
By Example \ref{E:infty regulars/irregulars}(i), $\omega^2$ is $\infty$-regular, i.e., $(\omega^2)$ is am-$\infty$ stable 
and thus so are $seI$ and $scI$. 
However $I$ is not am-$\infty$ stable. 
To prove this by contradiction, assume that it is. 
Then $\eta_{a_\infty} \in \Sigma(I)$, i.e., there is an $\alpha \in \text{c}_\text{o}^*, M > 0$, 
and $p \in \mathbb N$ such that $\eta_{a_\infty} \leq \alpha \omega^2 + MD_p\eta$. 
Without loss of generality assume that $\alpha_j = \epsilon_k$ for $m_{k-1} < j \leq m_k$. 
For every $m_{k-1} < n \leq m_k$,
\[
(\eta_{a_\infty})_n > \frac{1}{n} \sum_{j=n+1}^{m_k} \frac{1}{m_k^2} = \frac{m_k-n}{nm_k^2}.
\]
In particular, for $k > p$ by choosing $n = km_{k-1} = \frac{m_k}{k}$ we have
\[
\frac{k-1}{m_k^2} < ~\epsilon_k \frac{k^2}{m_k^2}+M(D_p\eta)_{km_{k-1}} = ~\epsilon_k \frac{k^2}{m_k^2}+M\frac{1}{m_k^2}.
\]
This implies $\epsilon_k > \frac{k-M-1}{k^2}>\frac{1}{2k}$ for $k$ large enough, in which case then $2\epsilon_k m_k > km_{k-1}$. 
Now by choosing $n = [2\epsilon_k m_k]$ and $k$ large enough to insure that $\epsilon_k \leq \frac{1}{2}$, 
we have $km_{k-1} \leq n \leq m_k$ and hence 
\[
\frac{m_k-n}{nm_k^2} < (\eta_{a_\infty})_n \leq \frac{\epsilon_k}{n^2}+(D_p\eta)_n = \frac{\epsilon_k}{n^2}+\frac{M}{m_k^2}.
\]
Then 
\[
M+1 \geq \frac{m_k}{n} - \frac{\epsilon_k m_k^2}{n^2} \geq  \frac{1}{2\epsilon_k} - \frac{\epsilon_k m_k^2}{(2\epsilon_k m_k-1)^2} 
= \frac{1}{\epsilon_k} \left(\frac{1}{2}-\frac{1}{(2-\frac{1}{\epsilon_km_k})^2}\right)
\]
which is a contradiction since $\epsilon_k m_k \rightarrow \infty$ and $\epsilon_k \rightarrow 0$.

\section{\leftline{\bf Nonsingular traces and applications to elementary operators}} 

It is well-known that 
the restriction of a trace on an ideal $I$ to the ideal $F$ of finite rank operators
must be a (possibly zero) scalar multiple of the standard trace $Tr$.

\begin{definition}\label{D:singular trace} A trace on an ideal that vanishes on $F$ is called 
singular, and nonsingular otherwise.
\end{definition}

Dixmier \cite{jD66} provided the first example of a (positive) singular trace. 
Its domain is the 
am-closure, $(\eta)^-=~_a(\eta_a)$, of a principal ideal 
$(\eta) \subset se(\eta)_a$.

Theorem \ref{T:DFWW} yields a complete characterization of ideals that support a nonsingular trace, 
namely, those ideals that do not contain $diag~\omega$ 
(cf. \cite[Introduction, Application 3 of Theorem 5.6]{DFWW} and also \cite{kDgWmW00}). 
For the reader's convenience this argument is presented and generalized in Proposition \ref{P:trace extension} below.
To prove it we first need another simple consequence of Theorem \ref{T:DFWW}. 

\begin{lemma}\label{L:nonsingular extension criteria} Let $I \ne \{0\}$ be an ideal for which $\omega \notin \Sigma(I)$.
\item[(i)] $\mathscr L_1 \cap [I,B(H)] \subset \{X\in\mathscr L_1\mid Tr~X=0\}$
\item[(ii)] $dim~\frac{F + [I,B(H)]}{[I,B(H)]} = 1$ 
\end{lemma}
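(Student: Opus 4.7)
The plan for (i) is to reduce first to the self-adjoint case by writing $X = \mathrm{Re}\,X + i\,\mathrm{Im}\,X$, both of which lie in $\mathscr{L}_1 \cap [I,B(H)]$ since $I$, and hence $[I,B(H)]$, is selfadjoint. It then suffices to show $Tr\,Y = 0$ for every self-adjoint $Y \in \mathscr{L}_1 \cap [I,B(H)]$. Applying Theorem \ref{T:DFWW} to $Y$ yields $\lambda(Y)_a \in S(I)$, i.e., $|\lambda(Y)_a|^* \in \Sigma(I)$. Since $Y \in \mathscr{L}_1$, the partial sums $n(\lambda(Y)_a)_n = \sum_{j=1}^n \lambda(Y)_j$ converge to $Tr\,Y$. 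If $Tr\,Y = c \neq 0$, then $|(\lambda(Y)_a)_n| \geq |c|/(2n)$ for all $n \geq N$, and a straightforward counting argument for the monotone rearrangement will give $|\lambda(Y)_a|^*_k \geq |c|/(4k)$ for all $k$ large enough. Using heredity of $\Sigma(I)$ together with the fact that $F \subset I$ for every nonzero ideal (to absorb the finite exceptional range), this forces $\omega \in \Sigma(I)$, contradicting the hypothesis. Hence $Tr\,Y = 0$.

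The plan for (ii) is to combine (i) with the classical Shoda-type fact that every trace zero finite rank operator is a single commutator of finite rank operators, so it lies in $[F, B(H)] \subset [I, B(H)]$ (again using $F \subset I$). Fixing a rank one projection $e_{11}$ and writing an arbitrary $P \in F$ as $P = (Tr\,P)\,e_{11} + \bigl(P - (Tr\,P)\,e_{11}\bigr)$, the second summand is finite rank with zero trace, hence lies in $[I, B(H)]$. This yields the identity $F + [I,B(H)] = \mathbb{C}\,e_{11} + [I, B(H)]$, so the quotient dimension is at most one. Finally, part (i) supplies $e_{11} \notin [I, B(H)]$, since $e_{11} \in \mathscr{L}_1$ with $Tr\,e_{11} = 1 \neq 0$; hence the quotient has dimension exactly one.

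The main obstacle will be the monotonization step in (i): translating the pointwise lower bound $|(\lambda(Y)_a)_n| \geq |c|/(2n)$ on the (possibly non-monotone) sequence $\lambda(Y)_a$ into a comparable lower bound on its decreasing rearrangement $|\lambda(Y)_a|^*$, and then reconciling the finite exceptional range of indices with monotonicity so that $\omega$ itself — rather than a finite-rank perturbation of it — may be concluded to lie in $\Sigma(I)$. The latter is handled cleanly by invoking $F \subset I$ and the heredity and ampliation invariance of characteristic sets. Aside from this, every step is either a direct application of Theorem \ref{T:DFWW} or a standard fact about commutators of finite-rank operators.
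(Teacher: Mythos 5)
Your proposal is correct and follows essentially the same route as the paper: reduce (i) to the selfadjoint case, apply Theorem \ref{T:DFWW} to conclude $\lambda(Y)_a \in S(I)$, derive $\omega \in \Sigma(I)$ from a nonzero trace (the paper states $|\lambda(X)_a|\asymp\omega$ a bit tersely; your explicit treatment of the monotone rearrangement and the finite exceptional range is a legitimate filling-in, not a new idea), and prove (ii) via the zero-trace finite-rank commutator fact to reduce $F+[I,B(H)]$ to $\mathbb{C}P+[I,B(H)]$. The only cosmetic divergence is in showing the rank-one projection is not in $[I,B(H)]$: you invoke part (i) (since $Tr\,e_{11}=1\neq 0$), whereas the paper applies Theorem \ref{T:DFWW} directly via $\lambda(P)_a=\omega$ — logically equivalent, as (i) is itself a corollary of that theorem.
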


\newpage

\begin{proof}
\item[(i)] 
Since the subspace $\mathscr L_1 \cap [I,B(H)]$ is the span of its selfadjoint 
elements, \linebreak
consider $X=X^* \in \mathscr L_1 \cap [I,B(H)]$. 
Then $\lambda(X)_a \in S(I)$ by Theorem \ref{T:DFWW}.
Then if $\sum_{1}^{\infty} \lambda(X)_j = Tr~X \neq 0$, it would follow that $|\lambda(X)_a| \asymp \omega$ and hence, 
by the hereditariness of $I^+$, $\omega \in \Sigma(I)$, against the hypothesis.
\item[(ii)] Fix a rank one projection $P$. 
Then $\lambda(P)_a = \omega$ and so $P \notin [I,B(H)]$ by Theorem \ref{T:DFWW}.
For each $X \in F + [I,B(H)]$ choose $T \in F$ for which $X-T \in [I,B(H)]$. 
As $T - (Tr~T)P \in [F,B(H)] \subset [I,B(H)]$ from the well-known fact that a finite complex matrix is a commutator if and only if it has zero trace (cf. \cite[Discussion of Problem 230]{pH82}), 
one obtains $X - (Tr~T)P \in [I,B(H)]$. 
Thus
\[F + [I,B(H)] = \{\lambda P + [I,B(H)] \mid \lambda \in \mathbb C\}.\]
\end{proof}

\begin{proposition}\label{P:trace extension} Let $I$ and $J$ be ideals and let $\tau$ be a trace on $J$. 
Then $\tau$ has a trace extension to $I+J$ if and only if 
\[
 J\cap[I,B(H)]\subset \{\,X \in J\mid\tau(X)=0\,\}.
\]
Moreover, the extension is unique if and only if $I \subset J + [I,B(H)]$.

In particular, $\omega\notin\Sigma(I)$ 
if and only if $Tr$ extends from $\mathscr L_1$ to $\mathscr L_1+\,I$ if and only if $Tr$ extends from $F$ to $I$.
\end{proposition}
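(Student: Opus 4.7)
The plan is to reduce the extension question to a compatibility condition on the subspace $J \cap [I, B(H)]$ via the modular law, and then build an explicit extension by linear algebra.

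First, since the commutator space of a sum of ideals is the sum of the commutator spaces,
\[ [I+J, B(H)] = [I, B(H)] + [J, B(H)], \]
and by the modular law applied to the inclusion $[J,B(H)] \subset J$,
\[ J \cap [I+J, B(H)] = [J, B(H)] + \bigl( J \cap [I, B(H)] \bigr). \]
A trace on $I+J$ extending $\tau$ must, in particular, vanish on $J \cap [I+J, B(H)]$ and restrict to $\tau$ on $J$, so the displayed identity forces $\tau$ to vanish on $J \cap [I, B(H)]$; this gives necessity.

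For sufficiency, assume $\tau$ vanishes on $J \cap [I,B(H)]$. Define $\tilde \tau$ on the subspace $J + [I, B(H)]$ by $\tilde \tau(X + Y) := \tau(X)$ for $X \in J$ and $Y \in [I, B(H)]$. The well-definedness follows from the hypothesis: if $X_1 + Y_1 = X_2 + Y_2$, then $X_1 - X_2 = Y_2 - Y_1 \in J \cap [I, B(H)]$, so $\tau(X_1) = \tau(X_2)$. By construction $\tilde \tau$ vanishes on $[I, B(H)]$ and, as an extension of $\tau$, on $[J, B(H)]$ as well, hence on all of $[I+J, B(H)]$. Choosing any algebraic linear extension of $\tilde \tau$ from $J + [I, B(H)]$ to all of $I + J$ (using a Hamel basis of the complement) produces the desired trace extension, since $[I+J, B(H)]$ lies in the initial domain where $\tilde \tau$ already vanishes.

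For the uniqueness claim, any two trace extensions differ by a trace on $I+J$ that vanishes on $J$, equivalently by a linear functional on $I+J$ vanishing on $J + [I, B(H)] + [J, B(H)] = J + [I, B(H)]$. Such nonzero functionals exist exactly when $(I+J)/(J + [I,B(H)]) \ne 0$, i.e., precisely when $I \not\subset J + [I, B(H)]$. This proves the uniqueness part.

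For the ``in particular'' consequences, take $J = \mathscr L_1$ (resp.\ $J = F$) and $\tau = Tr$. If $\omega \notin \Sigma(I)$ then Lemma \ref{L:nonsingular extension criteria}(i) says $\mathscr L_1 \cap [I, B(H)] \subset \ker Tr$, and since $F \subset \mathscr L_1$ the same holds for $F \cap [I, B(H)]$; so the main part gives both extensions. Conversely, if $\omega \in \Sigma(I)$ then, by the final paragraph of Section 2, $F \subset \mathscr L_1 \subset [I, B(H)]$, so $\mathscr L_1 \cap [I, B(H)] = \mathscr L_1$ and $F \cap [I, B(H)] = F$, neither of which lies in $\ker Tr$ (take a rank-one projection), and both extensions fail. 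The main obstacle I anticipate is only bookkeeping — verifying the modular-law identity cleanly and handling the Hamel-basis extension step without obscuring that every step is elementary.
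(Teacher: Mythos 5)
Your argument is correct and matches the paper's proof in all essentials: necessity by restricting a hypothetical extension, sufficiency by defining the extension on $J+[I,B(H)]$ via the coset representative in $J$ (well-defined exactly by the hypothesis) and then completing via a Hamel basis, and uniqueness by identifying the difference of two extensions with a linear functional annihilating $J+[I,B(H)]$. The only cosmetic deviation is your use of the modular law for the necessity direction, where the paper argues more directly from $J\cap[I,B(H)]\subset[I+J,B(H)]$; both are valid and of comparable length.
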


\begin{proof}
Assume that a trace $\tau$ on $J$ has a trace extension $\widetilde{\tau}$ to $I+J$ and that \linebreak
$X \in J\cap[I,B(H)]$.
Then $X \in [I+J,B(H)]$ and hence $\tau(X) = \widetilde{\tau}(X) = 0$ since 
every trace on $I+J$ 
must vanish on the commutator space of $I+J$. 

Conversely, assume that $J\cap[I,B(H)]\subset\{\,X \in J\mid\tau(X)=0\,\}$.
For $X \in J+[I,B(H)]$ choose $Y \in J$ for which 
$X-Y \in [I,B(H)]$ and
define $\tau'(X) := \tau(Y)$. 
As is easy to verify, $\tau'$ is a well-defined linear functional on $J+[I,B(H)]$,
it extends $\tau$, it vanishes on $[I,B(H)]$ and hence also on $[I+J,B(H)]=[I,B(H)]+[J,B(H)]$, and it is the unique linear extension of $\tau$ to $J+[I,B(H)]$ that vanishes on $[I,B(H)]$. \linebreak
If $I+J \neq J+[I,B(H)]$, use a Hamel basis argument to further extend $\tau'$ to a linear functional $\tau''$ on $I+J$, 
and this extension is never unique.
Since $\tau''$ vanishes on $[I+J,B(H)]$ it is a trace on $I+J$.

The case for extending $Tr$ from $\mathscr L_1$ or $F$, when $\omega \notin \Sigma(I)$, follows from Lemma \ref{L:nonsingular extension criteria}(i).
Conversely, if $\omega\in \Sigma(I)$, then $\mathscr L_1 \subset I$ and by Theorem \ref{T:DFWW}, $\mathscr L_1^+ \subset [I,B(H)]$, whence $\mathscr L_1 \subset [I,B(H)]$. Thus all traces on $I$ vanish on $\mathscr L_1$, i.e., $Tr$ cannot extend from $F$ or $\mathscr L_1$ to $I$.
\end{proof}

A simple consequence of this is that all traces on $J$ extend to $I+J$ if and only if $J\cap[I,B(H)]\subset [J,B(H)]$ since, as is elementary to show, 
\[[J,B(H)] =  \bigcap \{\{\,X \in J\mid\tau(X)=0\,\} \mid \tau ~\text{is a trace on}~ J\}.\]

\begin{remark}\label{R:extension,=,L1extensions cant be pos when omega not in I}
\item[(i)] A routine argument shows that for any ideal $J$ and trace $\tau$ on $J$, the collection of the ideals to which $\tau$ can be extended, 
i.e. the ideals $I\supset J$ for which \begin{center}$J\cap  [I, B(H)] \subset \{X \in J | \tau(X) = 0\}$, \end{center}
is closed under directed unions and hence it always has maximal elements. 
Because this collection is hereditary with respect to inclusion, it is closed under addition if and only if it has a unique maximal element.
That this may not be the case is easy to show for $\tau = Tr$ and $J = F$ by constructing two principal ideals 
$I_1$ and $I_2$ with $I_1\ne I_2$, $I_1 + I_2 = (\omega)$, but $I_i \ne (\omega)$ for $i=1,2$.
\item[(ii)] 
If a trace $\tau$ on and ideal $J$ has extensions $\tau_i$ to the ideals $I_1 \subset I_2$, there is no reason for $\tau_1$ to have an extension to $I_2$. 
For instance, again in the case of $\tau= Tr$, $J = F$, and $\omega  \notin \Sigma(I)$, 
there is a unique trace on $I$ if and only if $I$ is am-$\infty$ stable (see Theorem \ref{T:i-iii} below). 
Thus if $I_1$ is not am-$\infty$ stable but is contained in an am-$\infty$ stable ideal $I_2$,  
then all but one of the traces on $I_1$ do not further extend to $I_2$.

\item[(iii)] By Lemma \ref{L:nonsingular extension criteria}(i), the set equality holds always in 
\begin{center}$J\cap  [I, B(H)] \subset \{X \in J | \tau(X) = 0\}$\end{center} 
for $\tau = Tr$, $J = F$ and any ideal $I \not\supset (\omega)$. 
By Proposition \ref{P:omega notin I, codim, TFAE} below 
(see also the remark following it), 
equality holds for $\tau = Tr$, $J = \mathscr L_1$ and an ideal $I\supset \mathscr L_1$ to which $Tr$ can be extended (i.e., $\omega  \notin \Sigma(I)$)
if and only if $se(\omega) \subset I$. 
Notice that if $Tr$ is extendable to $I$, then $Tr$ is extendable also to $se(\omega) + I$ as $\omega  \notin \Sigma(se(\omega) + I)$. 
So every maximal ideal $I$ for the extension must contain $se(\omega)$ and satisfies the set equality.

\item[(iv)] Although $Tr$ is positive on $\mathscr L_1$ and it has extensions to any ideal $I$ properly containing $\mathscr L_1$ but not containing $(\omega)$(uncountably many according to Corollary \ref{C:dimensions of 3 quotients}(iii)), none of these extensions can be positive. 
Indeed, as is well known (e.g., see the proof of \cite[Lemma 2.15]{mW02} or \cite[Remark 2.5]{AGPS87}), $\tau \geq \tau (diag\,(1, 0, 0,...))\,Tr$, for any positive trace $\tau$. 
Thus if $I$ properly contains $\mathscr L_1$, it follows that $\tau (diag\,(1, 0, 0,...)) = 0$, so $\tau$ is singular, i.e., $\tau$ does not extend $Tr$. 
\end{remark}

The following result will also be useful.

\begin{proposition}\label{P:(L1+[])+=L1+} Let $I$ be an ideal for which $\omega \notin \Sigma(seI)$. Then 
\[
(\mathscr L_1 + [I,B(H)])^+ = \mathscr L_1^+.
\]
\end{proposition}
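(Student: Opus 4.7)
I would take $X \geq 0$ in $\mathscr L_1 + [I,B(H)]$, assume for contradiction that $X \notin \mathscr L_1$, and produce an $\eta \in \Sigma(I)$ with $\omega = o(\eta)$; by Remark \ref{R:soft terminology} this gives $\omega \in \Sigma(seI)$, contradicting the hypothesis.

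The first step is a structural rewriting. Using the classical decomposition $\mathscr L_1 = \mathbb C P \oplus [\mathscr L_1,B(H)]$ for a rank-one projection $P$ (via $Tr$), together with $[\mathscr L_1,B(H)] + [I,B(H)] = [\mathscr L_1+I,B(H)]$, I would write $X = Y + Z$ with $Y=Y^* \in \mathscr L_1$ and $Z=Z^* \in [I,B(H)]$ (by taking real parts) and then decompose
\[
X = \lambda P + T, \qquad \lambda = Tr(Y) \in \mathbb R,\quad T = T^* \in [\mathscr L_1+I,B(H)].
\]
Applying Theorem \ref{T:DFWW} to $T$, in the eigenvalue ordering with $|\lambda(T)|$ non-increasing (so $|\lambda(T)_n| = s_n(T)$), yields $|\lambda(T)_a| \leq \xi$ for some $\xi \in \Sigma(\mathscr L_1+I)$. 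Since $\Sigma(\mathscr L_1+I)$ is the hereditary closure of $(\ell^1)^* + \Sigma(I)$ (standard from the subadditivity $s_{m+n-1}(A+B) \leq s_m(A) + s_n(B)$), I may further write $\xi \leq \sigma + \eta$ with $\sigma \in (\ell^1)^*$ and $\eta \in \Sigma(I)$.

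The decisive use of $X \geq 0$ is that $T + \lambda P = X \geq 0$: when $\lambda \geq 0$, a min-max argument against the rank-one lower bound $-\lambda P$ forces $T^-$ to have rank at most one with $\|T^-\| \leq \lambda$, and when $\lambda < 0$ one has $T \geq 0$ outright. Consequently $\lambda(T)$ contains at most one negative entry, so $(s(T)_a)_n$ and $|\lambda(T)_a|_n$ differ by at most $2\|T^-\|/n = O(\omega_n)$, giving $(s(T)_a)_n \leq \xi_n + C\omega_n$. Weyl's rank-one perturbation bound $s_{n+1}(X) \leq s_n(T)$ then transfers this estimate to
\[
(s(X)_a)_n \leq \sigma_n + \eta_n + C'\omega_n
\]
for a constant $C'$. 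The contradiction falls out: $X \notin \mathscr L_1$ forces $(s(X)_a)_n/\omega_n = \sum_{j=1}^n s_j(X) \to \infty$, while $\sigma \in (\ell^1)^*$ forces $n\sigma_n \to 0$, so $\eta_n/\omega_n \to \infty$, i.e., $\omega = o(\eta)$ with $\eta \in \Sigma(I)$, which is $\omega \in \Sigma(seI)$.

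The main obstacle I expect is bookkeeping the rank-one correction uniformly across the three subcases $\lambda > 0$, $\lambda = 0$, $\lambda < 0$, and confirming that these corrections really fit inside a constant multiple of $\omega$ so that they are strictly dominated by the forced $\eta$-growth; the success of the whole argument hinges on the rank-at-most-one property of $T^-$, which is where the positivity hypothesis $X \geq 0$ and the rank-one nature of the trace-representing projection $P$ enter decisively.
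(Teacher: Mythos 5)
Your proof is correct, but it takes a genuinely different (and more elaborate) route than the paper's. The paper writes $X-T\in[I,B(H)]$ with $T\in\mathscr L_1$ self-adjoint, diagonalizes the self-adjoint compact operator $X-T$, and compares diagonals directly in that eigenbasis: on $\ker(X-T)$ the diagonal of $X$ agrees with that of the trace-class $T$, and off the kernel the diagonal entries of $X$ differ from the eigenvalues $\lambda(X-T)$ only by the summable $(Te_j,e_j)$, so divergence of $\sum(Xe_j,e_j)$ forces the Cesaro means $\lambda(X-T)_a$ to dominate $\omega$, and Theorem \ref{T:DFWW} applied \emph{to the ideal $I$ itself} (since $X-T\in[I,B(H)]$) gives $\omega=o(\xi)$ with $\xi\in\Sigma(I)$, i.e.\ $\omega\in\Sigma(seI)$. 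You instead first reduce the trace-class piece to a rank-one $\lambda P$ via $\mathscr L_1=\mathbb C P\oplus[\mathscr L_1,B(H)]$, which pushes the commutator into $[\mathscr L_1+I,B(H)]$, so Theorem \ref{T:DFWW} is applied to the \emph{larger} ideal $\mathscr L_1+I$ and you then have to split $\Sigma(\mathscr L_1+I)$ into an $(\ell^1)^*$ part and a $\Sigma(I)$ part and check the $\ell^1$ piece is negligible (via $n\sigma_n\to 0$). You also need the case split on $\operatorname{sgn}\lambda$, the rank-at-most-one control of $T^-$, and Weyl's rank-one perturbation bound to pass from $s(T)$ to $s(X)$ — none of which the paper's proof requires, since comparing $(Xe_j,e_j)$ with $((X-T)e_j,e_j)$ directly absorbs the whole trace-class correction in one step, and positivity of $X$ is used only to make the partial sums $\sum_1^n(Xe_j,e_j)$ monotone. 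Your reduction to $\lambda P$ is a legitimate alternative and leans on standard spectral perturbation theory rather than the diagonal-comparison trick, but it buys nothing in generality and costs some case analysis and index bookkeeping (the rank-one perturbation shifts indices by one in the Weyl step, which you gloss over but which only affects constants).
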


\begin{proof} Let $0 \leq X \in \mathscr{L}_1+[I,B(H)]$ and so $X-T\in [I,B(H)]$ for some $T \in \mathscr{L}_1$.
Since $X=X^*$ and $[I,B(H)]=[I,B(H)]^*$, assume without loss of generality that $T=T^*$. 
Let $f_j$ be an orthonormal basis for $\mathscr N(X-T)$, the null space of $X-T$, \linebreak
and let $e_j$ be an orthonormal basis of eigenvectors of $X-T$ for 
$\mathscr N(X-T)^\perp$ arranged so that  
$|((X-T)e_j,e_j)|$ is monotone nonincreasing. 
Since $\sum (Xf_j,f_j) = \sum (Tf_j,f_j) < \infty$, 
to prove that $X \in \mathscr L_1$ it suffices to show that $\sum (Xe_j,e_j) < \infty$.
But if otherwise $\sum_{1}^{\infty} (Xe_j,e_j) = \infty$, then  
$\sum_{1}^{\infty} ((X-T)e_j,e_j) = \infty$ 
since $T \in \mathscr{L}_1$, \linebreak
and so $\omega = o(\frac{\sum_{j=1}^{n} ((X-T)e_j,e_j)}{n})$.
By Theorem~\ref{T:DFWW}, 
$\langle |\frac {\sum_{j=1}^{n} ((X-T)e_j,e_j)}{n}| \rangle \leq \xi$ for some $\xi \in \Sigma(I)$
and hence $\omega = o(\xi)$,
against the hypothesis.
\end{proof}

\begin{remark}\label{R:(L1+[])+=L1+} 
\item[(i)] The condition $\omega \notin \Sigma(seI)$ is also necessary since in \cite[Lemma 6.2(iv)]{vKgW07-soft} we show that if 
$\omega \in \Sigma(seI)$, then $_aI \supsetneq \mathscr L_1$ and therefore $[I,B(H)]^+ = (_aI)^+ \supsetneq \mathscr L_1^+$.
\item[(ii)] 
If $\omega \notin \Sigma(I)$ and $I \supset \mathscr L_1$, then the extension of $Tr$ to $I$ is unique only in the trivial case $I = \mathscr L_1$.
Indeed, from Proposition \ref{P:trace extension}, uniqueness implies $I \subset \mathscr L_1 + [I,B(H)]$, 
which implies $I^+ \subset \mathscr L_1^+$ by Proposition \ref{P:(L1+[])+=L1+}. 
Corollary \ref{C:dimensions of 3 quotients}(iii) will show that, but for the $I = \mathscr L_1$ case, there are always uncountably many linearly independent extensions of $Tr$ to $I$. 
\end{remark}

Trace extensions find natural applications to questions on \emph{elementary operators}. 
If $A_i, B_i \in B(H)$, then the $B(H)$-map 
\[
B(H) \owns T \rightarrow \Delta(T) := \sum_{i=1}^{n} A_iTB_i
\]
is called an elementary operator and its adjoint $B(H)$-map is 
$\Delta^*(T) := \sum_{i=1}^{n} A_i^*TB_i^*$. 
Elementary operators include commutators and intertwiners and hence their theory is connected to the structure of commutator spaces. 
The Fuglede-Putnam Theorem \cite{bF51}, \cite{cP51} states that for the case $\Delta(T) = AT-TB$ where A, B are normal operators, 
$\Delta(T) = 0$ implies that $\Delta^*(T) = 0$. 
Also for $n = 2$, Weiss \cite{gW83} 
generalized this further to the case where \{$A_i$\} and \{$B_i$\}, $i=1,2$, are 
separately commuting families of normal operators by proving that 
$\Delta(T) \in \mathscr{L}_2$ implies $\Delta^*(T) \in \mathscr{L}_2$ and 
$\Vert\Delta(T)\Vert_2 = \Vert\Delta^*(T)\Vert_2$. 
(This is also a consequence of Voiculescu's 
\cite[Theorem 4.2 and Introduction to Section 4]{dV79/81} but neither Weiss' nor Voiculescu's methods seem to apply to the case $n > 2$.) 
In \cite{vS83} Shulman showed that for $n = 6$, $\Delta(T) = 0$ does not imply $\Delta^*(T) \in \mathscr{L}_2$.

If we impose some additional conditions involving ideals on the families $\{A_i\},\{B_i\}$ and $T$,  
we can extend these implications to arbitrary $n$ past the obstruction found by Shulman. 

Assume that \{$A_i$\}, \{$B_i$\}, $i = 1,\dots,n$, 
are separately commuting families of normal operators and let $T \in B(H)$. 

Define the following ideals: 
\begin{align*}
&L := (\sum_{i=1}^{n}(A_i T)(B_i))^2, \quad L_* := (\sum_{i=1}^{n}(A_i^* T)(B_i))^2, \\
&R := (\sum_{i=1}^{n}(A_i)(TB_i))^2, \quad R_* := (\sum_{i=1}^{n}(A_i)(TB_i^*))^2 ~\text{and}\\
&I_{\Delta,T} := L \cap L_* \cap R \cap R_*, \quad S = (\sum_{i=1}^{n}(A_i T B_i)) \cap L^{1/2}\cap R^{1/2}
\end{align*}
where $(X)$ denotes the principal ideal generated by the operator $X$ and $MN$ (resp., $M+N$) denotes the product (resp., sum) of the ideals $M,N$. 
Then $I_{\Delta,T}$ and $S$ are either $\{0\},~B(H)$, or a principal ideal. 

\begin{proposition}\label{P:FugL2} 
If $\omega \notin \Sigma(I_{\Delta,T})$, then 
$\Delta(T) \in \mathscr{L}_2$ implies \begin{center}$\Delta^*(T) \in \mathscr L_2$ and
$\Vert\Delta(T)\Vert_2 = \Vert\Delta^*(T)\Vert_2$.\end{center}
\end{proposition}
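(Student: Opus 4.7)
The plan is to extend the standard trace $Tr$ from $\mathscr L_1$ to a trace $\tau$ on $\mathscr L_1 + I_{\Delta,T}$, establish a commutator identity that converts $\Delta^*(T)^*\Delta^*(T)$ into $\Delta(T)\Delta(T)^*$ modulo $[I_{\Delta,T}, B(H)]$, and extract both $\Delta^*(T) \in \mathscr L_2$ and the norm equality from a positivity argument combined with Proposition \ref{P:(L1+[])+=L1+}. Since $\omega \notin \Sigma(I_{\Delta, T})$, Proposition \ref{P:trace extension} furnishes such an extension $\tau$. The hypothesis that $\{A_i\}$ and $\{B_i\}$ are commuting families of normal operators gives, via Fuglede--Putnam, the relations $A_j^* A_i = A_i A_j^*$ and $B_i B_j^* = B_j^* B_i$ for all $i,j$.

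The technical heart of the proof is the identity
\[
\Delta^*(T)^*\Delta^*(T) - \Delta(T)\Delta(T)^* \in [I_{\Delta, T}, B(H)].
\]
Writing out the two sides as
\[
\Delta^*(T)^*\Delta^*(T) = \sum_{i,j} B_j T^* A_j A_i^* T B_i^*, \qquad \Delta(T)\Delta(T)^* = \sum_{i,j} A_i T B_i B_j^* T^* A_j^*,
\]
one passes from the first expression to the second by repeatedly applying the Fuglede commutation relations (to move $A_j$ past $A_i^*$ and $B_j$ past $B_i^*$) and cyclically rotating the remaining factors. The four ideals $L$, $L_*$, $R$, $R_*$ are tailored so that each intermediate factorization splits as a product $XY$ with $X$ lying in one of them; the rearrangement $XY \mapsto YX$ is then a single commutator with a $B(H)$-element and lies in the union of the four commutator spaces $[L, B(H)]$, $[L_*, B(H)]$, $[R, B(H)]$, $[R_*, B(H)]$. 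The principal obstacle is the careful bookkeeping required to show that, after summing over $i, j$ and combining all the Fuglede and cyclic moves, the cumulative error lies inside $[I_{\Delta, T}, B(H)]$, i.e., each atomic commutator $[X, Y]$ that arises can be arranged so that $X \in I_{\Delta, T} = L \cap L_* \cap R \cap R_*$ rather than merely in one of the four.

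With this commutator identity in hand, $\Delta(T) \in \mathscr L_2$ gives $\Delta(T)\Delta(T)^* \in \mathscr L_1$, and hence $\Delta^*(T)^*\Delta^*(T) \in \mathscr L_1 + [I_{\Delta, T}, B(H)]$. Because $se\, I_{\Delta, T} \subset I_{\Delta, T}$, the hypothesis also gives $\omega \notin \Sigma(se\, I_{\Delta, T})$, so Proposition \ref{P:(L1+[])+=L1+} applies and yields $(\mathscr L_1 + [I_{\Delta, T}, B(H)])^+ = \mathscr L_1^+$. Since $\Delta^*(T)^*\Delta^*(T)$ is positive, this forces $\Delta^*(T)^*\Delta^*(T) \in \mathscr L_1$, i.e., $\Delta^*(T) \in \mathscr L_2$. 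Applying $\tau$ to both sides of the commutator identity, using that $\tau|_{\mathscr L_1} = Tr$, that $\tau$ vanishes on $[I_{\Delta, T}, B(H)]$, and the cyclicity of $Tr$ on $\mathscr L_1$, one finally obtains
\[
\|\Delta^*(T)\|_2^2 = Tr(\Delta^*(T)^*\Delta^*(T)) = Tr(\Delta(T)\Delta(T)^*) = Tr(\Delta(T)^*\Delta(T)) = \|\Delta(T)\|_2^2.
\]
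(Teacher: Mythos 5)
Your high-level plan (commutator identity, positivity via Proposition \ref{P:(L1+[])+=L1+}, trace extension via Proposition \ref{P:trace extension}) matches the paper's, but the technical heart of your proof is exactly where the gap lies, and you flag it yourself without resolving it. The claim that ``each atomic commutator $[X,Y]$ that arises can be arranged so that $X \in I_{\Delta,T}$'' is not established, and it is not clear how it could be. The natural commutators produced by moving a $B_j^*$ or an $A_j^*$ past the other factors land, via the identity $[M,N]=[MN,B(H)]$, in one of $[L,B(H)]$, $[L_*,B(H)]$, $[R,B(H)]$, $[R_*,B(H)]$ depending on which factorization you choose --- but the corresponding products $MN$ equal $L$ (or $L_*$, $R$, $R_*$), not the intersection $I_{\Delta,T}=L\cap L_*\cap R\cap R_*$, so no single factorization of a single commutator places it in $[I_{\Delta,T},B(H)]$.

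The missing idea is the intersection formula $[M,B(H)]\cap[N,B(H)]=[M\cap N,B(H)]$, a consequence of Theorem \ref{T:DFWW}. The paper does \emph{not} try to land each atomic commutator in $[I_{\Delta,T},B(H)]$. Instead it shows that the \emph{total} difference $|\Delta^*(T)|^2-|\Delta(T)|^2$ lies in $[L,B(H)]$, and then (via symmetry in $\Delta\leftrightarrow\Delta^*$) in $[L_*,B(H)]$, and also (via a different rearrangement, cycling factors the other way) in $[R,B(H)]+[R_*,B(H)]=[R+R_*,B(H)]$; the intersection formula then places it in $[L\cap L_*\cap(R+R_*),B(H)]=[I_1,B(H)]$. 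Moreover, the proof never gets the difference all the way into $[I_{\Delta,T},B(H)]$; it gets it only into $[I_1,B(H)]$ or $[I_2,B(H)]$ where $I_1=L\cap L_*\cap(R+R_*)$, $I_2=R\cap R_*\cap(L+L_*)$. Since $I_{\Delta,T}=I_1\cap I_2$, the hypothesis $\omega\notin\Sigma(I_{\Delta,T})$ gives $\omega\notin\Sigma(I_1)$ or $\omega\notin\Sigma(I_2)$, and the proof splits into these two cases --- a case split that does not appear in your proposal. You would need both the intersection formula and this case analysis to close the argument.
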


\begin{proof}
Define $I_1=L \cap L_* \cap (R + R_*)$ and $I_2=R \cap R_* \cap (L + L_*)$ so $I_{\Delta,T}=I_1\cap I_2$. Assume first that $\omega \notin \Sigma(I_1)$.
We start by showing that $|\Delta^*(T)|^2 - |\Delta(T)|^2 \in [L,B(H)]$. 
Observe that 
\[
|\Delta(T)|^2 = \sum_{i,j=1}^{n} B_j^*T^*A_j^*A_iTB_i \quad \text{and} \quad |\Delta^*(T)|^2 = \sum_{i,j=1}^{n} B_iT^*A_iA_j^*TB_j^*.
\] 
Then for each $i,j$, 
\begin{align*}
B_j^*T^*A_j^*A_iTB_i-T^*A_j^*A_iTB_iB_j^* &\in  [(B_j^*),(T^*A_j^*)(A_iT)(B_i)] \\
&= [(B_j),(A_jT)(A_iT)(B_i)]  \\
&= [(A_jT)(B_j)(A_iT)(B_i),B(H)] \subset  [L,B(H)]. 
\end{align*}
Here use the elementary facts that $(X) = (X^*)$ for every operator $X$, 
that the product of ideals is a commutative operation, and use the 
deep identity $[M,N]=[MN,B(H)]$ \cite[Theorem 5.10]{DFWW} for ideals $M,N$. 
By the Fuglede-Putnam Theorem \cite{cP51} and the assumption that \{$B_i$\} 
are normal and commuting we get that, for all $i,j$: 
\[
T^*A_j^*A_iTB_iB_j^* = T^*A_j^*A_iTB_j^*B_i.
\]
Then, as above,  
\[
T^*A_j^*A_iTB_j^*B_i - B_iT^*A_j^*A_iTB_j^* \in [L,B(H)].
\]
Again by the Fuglede-Putnam Theorem, $B_iT^*A_j^*A_iTB_j^* = B_iT^*A_iA_j^*TB_j^*$, 
which proves the claim. 
Interchanging the role of $\Delta$ and $\Delta^*$ yields also 
$|\Delta^*(T)|^2 - |\Delta(T)|^2 \in [L_*,B(H)]$.
On the other hand, by the same argument, for all $i,j$ one has
\[
B_j^*T^*A_j^*A_iTB_i-A_j^*A_iTB_iB_j^*T^* \in [R,B(H)]
\]
and by applying twice the Fuglede-Putnam Theorem,
\[
A_j^*A_iTB_iB_j^*T^* = A_iA_j^*TB_j^*B_iT^*.
\]
Similarly, \quad
$
A_iA_j^*TB_j^*B_iT^* - B_iT^*A_iA_j^*TB_j^* \in [R_*,B(H)]
$
and hence 
\[|\Delta^*(T)|^2 - |\Delta(T)|^2 \in [R,B(H)]+[R_*,B(H)] = [R+R_*,B(H)].
\]
A simple consequence of Theorem \ref{T:DFWW} is that for ideals $M,N$, 
\[
[M,B(H)] \cap [N,B(H)] = [M \cap N,B(H)].
\]
Therefore $|\Delta^*(T)|^2 - |\Delta(T)|^2 \in [I_1,B(H)]$.

If $\Delta(T) \in \mathscr L_2$ and hence $|\Delta(T)|^2 \in \mathscr L_1$, 
then by Proposition \ref{P:(L1+[])+=L1+}, 
\begin{center}$|\Delta^*(T)|^2 \in (\mathscr L_1 + [I_1,B(H)])^+ = \mathscr L_1^+$.\end{center}
Moreover, by Proposition \ref{P:trace extension}, there is a trace extension $\tau$ of $Tr$ 
from $\mathscr L_1$ to $\mathscr L_1 + I_1$.
Since $\tau$ vanishes on $[I_1,B(H)] \subset \mathscr L_1 + [I_1,B(H)]$, so
$\tau(|\Delta(T)|^2) = \tau(|\Delta^*(T)|^2)$ and hence $Tr(|\Delta(T)|^2) = Tr(|\Delta^*(T)|^2)$. 

If $\omega \in \Sigma(I_1)$, then $\omega \notin \Sigma(I_2)$ and then we apply the same arguments to show that 
$\Delta^*(T)(\Delta^*(T))^* - \Delta(T)(\Delta(T))^* \in [I_2,B(H)]$ and to draw the same conclusions.
\end{proof}
\noindent A sufficient condition independent of $T$ that insures that $\omega \notin \Sigma(I_{\Delta,T})$ is 
\begin{center}$\omega^{1/4} \ne O(\sum_{i=1}^n(s(A_i)+s(B_i)))$. \end{center}
So also is the condition 
$\omega^{1/2} \ne O(\sum_{i=1}^n s(A_i))$ or the condition $\omega^{1/2} \ne O(\sum_{i=1}^n s(B_i))$.

\newpage

Propositions \ref{P:trace extension} and \ref{P:(L1+[])+=L1+} can also be applied to a problem of Shulman. Let 
\[
\Delta(T)=\sum_{i=1}^{n} A_iTB_i
\]
 be an elementary operator where the operators $A_i$ and $B_i$ are not assumed to be  commuting or normal. Shulman showed that the composition $\Delta^*(\Delta(T)) = 0$ does not imply $\Delta(T) = 0$ 
and conjectured that this implication holds under the additional 
assumption that $\Delta(T) \in \mathscr{L}_1$.  
In the case that the ideal $S$ is ``not too large" we can prove the implication without making this assumption.

\begin{proposition}\label{P:Shulman problem}
If $\omega \notin \Sigma(S)$, then $\Delta^*(\Delta(T)) \in \mathscr{L}_1$ implies that
$\Delta(T) \in \mathscr{L}_2$ 
and $\Vert\Delta(T)\Vert_2 = Tr~ T^*\Delta^*(\Delta(T))$. 

In particular, if $\Delta^*(\Delta(T)) = 0$ then $\Delta(T) = 0$.
\end{proposition}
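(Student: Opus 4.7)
The plan is to mimic the strategy used for Proposition~\ref{P:FugL2}: produce a commutator-modulo identity exhibiting $T^*\Delta^*(\Delta(T))-|\Delta(T)|^2$ as an element of $[S,B(H)]$ and then apply Propositions~\ref{P:(L1+[])+=L1+} and~\ref{P:trace extension} to conclude. The cornerstone algebraic identity I would produce first is
\[
T^*\Delta^*(\Delta(T))-|\Delta(T)|^2 \;=\; \sum_{i=1}^n T^*A_i^*\,\Delta(T)\,B_i^* \;-\; \Bigl(\sum_{i=1}^n B_i^*T^*A_i^*\Bigr)\Delta(T) \;=\; \sum_{i=1}^n\bigl[\,T^*A_i^*\,\Delta(T),\,B_i^*\,\bigr],
\]
obtained by expanding $\Delta(T)^*=\sum_iB_i^*T^*A_i^*$ and recognizing $|\Delta(T)|^2=\Delta(T)^*\Delta(T)$ in the resulting double sum.

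The substance of the argument is then to show that every pivot $X_i:=T^*A_i^*\,\Delta(T)$ lies in $S=(\Delta(T))\cap L^{1/2}\cap R^{1/2}$. Membership in $(\Delta(T))$ is immediate because $(\Delta(T))$ is two-sided. For $L^{1/2}=\sum_k(A_kT)(B_k)$, I would expand $X_i=\sum_j(T^*A_i^*A_jT)B_j$ and observe that $T^*A_i^*A_jT\in(A_jT)$ (as $(A_jT)$ is two-sided), so each summand lies in $(A_jT)(B_j)\subset L^{1/2}$. The alternate factorization $T^*A_i^*A_jTB_j=(T^*A_i^*A_j)(TB_j)$ with $T^*A_i^*A_j\in(A_j)$ places each summand in $(A_j)(TB_j)\subset R^{1/2}$. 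Hence $X_i\in S$ and each commutator $[X_i,B_i^*]\in[S,B(H)]$, so the difference on the left-hand side of the identity lies in $[S,B(H)]$.

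Now if $\Delta^*(\Delta(T))\in\mathscr{L}_1$, then $T^*\Delta^*(\Delta(T))\in\mathscr{L}_1$ by ideal invariance, and the identity gives $|\Delta(T)|^2\in\mathscr{L}_1+[S,B(H)]$. Since $\omega\notin\Sigma(S)$ implies $\omega\notin\Sigma(seS)$ and $|\Delta(T)|^2\ge 0$, Proposition~\ref{P:(L1+[])+=L1+} forces $|\Delta(T)|^2\in\mathscr{L}_1^+$, i.e., $\Delta(T)\in\mathscr{L}_2$. The same hypothesis $\omega\notin\Sigma(S)$ yields, via Proposition~\ref{P:trace extension}, a trace $\tau$ on $\mathscr{L}_1+S$ extending $Tr$; since $\tau$ vanishes on $[S,B(H)]\subset[\mathscr{L}_1+S,B(H)]$ and both $T^*\Delta^*(\Delta(T))$ and $|\Delta(T)|^2$ already sit in $\mathscr{L}_1$, the identity reads $Tr(T^*\Delta^*(\Delta(T)))=Tr|\Delta(T)|^2=\|\Delta(T)\|_2^2$ (the stated equality appears to be missing a square). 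The ``in particular'' assertion is then immediate, since $\Delta^*(\Delta(T))=0$ gives $\|\Delta(T)\|_2^2=0$.

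The main obstacle is the bookkeeping of the second step: one must exhibit the pivot $T^*A_i^*A_jTB_j$ simultaneously as an element of $(A_jT)(B_j)$ and of $(A_j)(TB_j)$ in order to drop $X_i$ into the intersection defining $S$. Once the pivot is in $S$, the passage to $[S,B(H)]$ is automatic and the remainder of the proof simply feeds into the trace-extension/positivity machinery already developed in Section~5.
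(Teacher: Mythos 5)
Your proof is correct and follows the paper's overall strategy (exhibit a commutator-modulo identity and then apply Propositions~\ref{P:(L1+[])+=L1+} and~\ref{P:trace extension}), but you streamline the key step. The paper places the difference $|\Delta(T)|^2 - T^*\Delta^*(\Delta(T))$ only in $[S_L,B(H)]$, where $S_L=(\sum_i(A_iTB_i))\cap L^{1/2}$; since the hypothesis $\omega\notin\Sigma(S)$ does not by itself give $\omega\notin\Sigma(S_L)$, the paper splits into the two cases $\omega\notin\Sigma(S_L)$ and $\omega\notin\Sigma(S_R)$, invoking in the second case the parallel identity for $|\Delta(T)^*|^2 - \Delta^*(\Delta(T))T^*$. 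By also checking the second factorization $T^*A_i^*A_jTB_j=(T^*A_i^*A_j)(TB_j)\in(A_j)(TB_j)\subset R^{1/2}$, you show the pivots $X_i=T^*A_i^*\Delta(T)$ lie in $S$ itself rather than merely in $S_L$, which puts the difference directly in $[S,B(H)]$ and lets the single hypothesis $\omega\notin\Sigma(S)$ do all the work with no case split. One small notational point: the paper's $S$ uses the sum of principal ideals $\sum_i(A_iTB_i)$, not $(\Delta(T))$, but since $(\Delta(T))\subset\sum_i(A_iTB_i)$ your stronger claim $X_i\in(\Delta(T))\cap L^{1/2}\cap R^{1/2}$ still gives $X_i\in S$. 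You are also correct that the stated equality should read $\|\Delta(T)\|_2^2=Tr(T^*\Delta^*(\Delta(T)))$; the missing exponent is a typo in the paper.
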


\begin{proof}
Let $S_L = (\sum_{i=1}^{n}(A_i T B_i)) \cap L^{1/2}$ and $S_R  = (\sum_{i=1}^{n}(A_i T B_i)) \cap R^{1/2}$, so that $S =S_L \cap S_R$. 
Assume that $\omega \notin \Sigma(S_L)$.
Using the first step in the proof of Proposition \ref{P:FugL2} one has
\begin{align*}
|\Delta(T)|^2 - T^*\Delta^*(\Delta(T)) &= \sum_{i,j=1}^{n} (B_j^*T^*A_j^*A_iTB_i-T^*A_j^*A_iTB_iB_j^*) \\
&\in [S_L,B(H)].
\end{align*}
So if $\Delta^*(\Delta(T)) \in \mathscr{L}_1$ then 
$|\Delta(T)|^2 \in (\mathscr{L}_1 + [S_L,B(H)])^+ = \mathscr{L}_1^+$ by 
Proposition \ref{P:(L1+[])+=L1+}.
The required equality then follows by the same reasoning as in the conclusion of the proof of Proposition \ref{P:FugL2}.

If $\omega \in \Sigma(S_L)$, then $\omega \notin \Sigma(S_R)$ and we reach the same conclusions by considering $|\Delta(T)^*|^2- \Delta^*(\Delta(T))T^* \in [S_R, B(H)]$.
\end{proof}

\section{\leftline{\bf Uniqueness of Traces}}

An ideal $I$ supports a unique nonzero trace (up to scalar multiplication) 
precisely when $dim~\frac{I}{[I,B(H)]} = 1$.
In this section we characterize in terms of arithmetic means at infinity when this occurs for those ideals where $\omega \notin \Sigma(I)$. 

The next proposition is based on Theorem \ref{T:DFWW} which Kalton \cite{nK98} extended to non-normal operators 
for the class of geometrically stable ideals. These are the ideals $I$ for which $\Sigma(I)$ is invariant under geometric means,
that is,
\[
\xi \in \Sigma(I) ~\text{implies}~ \xi_g :=~ <(\xi_1\cdots\xi_n)^{\frac{1}{n}}> ~\in \Sigma(I).
\]

Notice that if $X \in I$ and $\lambda(X)$ and $\widetilde\lambda(X)$ are two different 
orderings of the sequence of all the eigenvalues (if any) of $X$, 
repeated according to algebraic multiplicity, augmented  
by adding infinitely many zeros when there are only a finite number of nonzero eigenvalues, 
and arranged so that both $|\lambda(X)|$ and $|\widetilde\lambda(X)|$ are monotone nonincreasing, 
then $|\lambda(X)|$ and $|\widetilde\lambda(X)| \in \Sigma(I)$ and it is elementary to show that 
$|\widetilde\lambda(X)_a| \leq |\lambda(X)_a| + 2|\lambda(X)|$. 
Similarly, $|\widetilde\lambda(X)_{a_\infty}| \leq |\lambda(X)_{a_\infty}| + 2|\lambda(X)|$ when \linebreak
$X \in \mathscr L_1 \cap I$. 
For this, notice that there is an increasing sequence of indices $n_k$ with $n_1=1$ 
for which $|\lambda(X)|_j=|\tilde \lambda(X)|_j=|\lambda(X)|_{n_k}$ for $n_k \leq j < n_{k+1}$.
Then $\sum_{n_k}^{n_{k+1}-1} \lambda(X)_j = \sum_{n_k}^{n_{k+1}-1} \tilde \lambda(X)_j$ for all $k$ and hence 
$\sum_{n_k}^{\infty} \lambda(X)_j = \sum_{n_k}^{\infty} \tilde \lambda(X)_j$. If $n_k \leq n < n_{k+1}$ then 
\begin{align*} 
|(\tilde\lambda(X)_{a_\infty})_n| - |(\lambda(X)_{a_\infty})_n| &\leq |(\tilde\lambda(X)_{a_\infty})_n - (\lambda(X)_{a_\infty})_n| \\
&= \frac{1}{n} |\sum_{n+1}^{\infty} \tilde \lambda(X)_j - \sum_{n+1}^{\infty} \lambda(X)_j| \\
& = \frac{1}{n} |\sum_{n_k}^n \tilde \lambda(X)_j - \sum_{n_k}^n \lambda(X)_j| \\
&\leq 2|\lambda(X)_n|.
\end{align*}
Thus $\lambda(X)_a \in S(I)$ (resp., $\lambda(X)_{a_\infty} \in S(I)$) if and only if 
$\widetilde\lambda(X)_a \in S(I)$ (resp., $\widetilde\lambda(X)_{a_\infty} \in S(I)$).
This illustrates in an elementary way why the choice of the ordering for $\lambda(X)$ does not matter in Theorem \ref{T:DFWW} and in Proposition \ref{P:F+[]} below. (See also \cite[Theorem 5.6]{DFWW}.)

\begin{proposition}\label{P:F+[]}
Let $I$ be an ideal, let $X \in \mathscr{L}_1 \cap I$, and assume that either $X$ is normal or $I$ is 
geometrically stable.
Then \begin{center}$X \in F + [I,B(H)]$ if and only if 
$\lambda(X)_{a_\infty} \in S(I)$. \end{center}
\end{proposition}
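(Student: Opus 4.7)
The plan rests on the elementary but crucial identity
$$\lambda(X)_a(n) + \lambda(X)_{a_\infty}(n) = \frac{Tr\,X}{n},$$
valid for every $X \in \mathscr L_1$ and every ordered spectral sequence $\lambda(X)$ (since Lidskii's theorem gives $\sum_j \lambda(X)_j = Tr\,X$). This lets me trade membership of $\lambda(X)_a$ in $S(I)$ for that of $\lambda(X)_{a_\infty}$, modulo a scalar multiple of $\omega$. I would then split the argument according to whether $\omega \in \Sigma(I)$.

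If $\omega \in \Sigma(I)$, then by Theorem~\ref{T:DFWW} one has $F \subset [I,B(H)]$, so $F + [I,B(H)] = [I,B(H)]$, and applying Theorem~\ref{T:DFWW} (when $X$ is normal) or Kalton's extension of it (when $I$ is geometrically stable) yields $X \in [I,B(H)] \iff \lambda(X)_a \in S(I)$. Since $\omega \in S(I)$, the identity above makes this equivalent to $\lambda(X)_{a_\infty} \in S(I)$.

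If $\omega \notin \Sigma(I)$, then Lemma~\ref{L:nonsingular extension criteria}(ii) says $F + [I,B(H)] = \mathbb{C}P + [I,B(H)]$ for any rank-one projection $P$, so $X \in F + [I,B(H)]$ iff there exists $\beta \in \mathbb C$ with $X - \beta P \in [I,B(H)]$. For the normal case I would pick $P = e_1 \otimes e_1^{*}$ along an eigenvector of $X$; then $X - \beta P$ is normal with spectral sequence $(\mu_1 - \beta, \mu_2, \mu_3, \dots)$ in the natural ordering, and a direct computation gives
$$\lambda(X - \beta P)_a = \lambda(X)_a - \beta\,\omega = (Tr\,X - \beta)\,\omega - \lambda(X)_{a_\infty}.$$
Because $X \in \mathscr L_1$ implies the tail $\sum_{j>n}\lambda(X)_j \to 0$ and hence $\lambda(X)_{a_\infty} = o(\omega)$, while $\omega \notin S(I)$, the only way the displayed sequence can lie in $S(I)$ is to take $\beta = Tr\,X$, and the condition then collapses to $\lambda(X)_{a_\infty} \in S(I)$. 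Combining with Theorem~\ref{T:DFWW} yields both implications.

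For the geometrically stable case (with $X$ not necessarily normal), the same scheme applies using Kalton's extension: for any rank-one $P$, the identity gives $\lambda(X-\beta P)_a = (Tr\,X - \beta)\omega - \lambda(X - \beta P)_{a_\infty}$, forcing $\beta = Tr\,X$ and reducing matters to showing $\lambda(X - (Tr\,X)P)_{a_\infty} \in S(I) \iff \lambda(X)_{a_\infty} \in S(I)$. The hard part will be precisely this last equivalence: unlike the normal case, where a well-chosen $P$ lets us read off the eigenvalue sequence explicitly, here the rank-one trace-class perturbation $X \mapsto X - (Tr\,X)P$ can reshuffle the eigenvalues in a nontrivial way. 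I would attack this via Weyl's inequality (to bound $|\lambda(X - (Tr\,X)P)_{a_\infty}|$ in terms of $s$-numbers) together with the ordering-insensitivity estimate $|\widetilde\lambda(X)_{a_\infty}| \leq |\lambda(X)_{a_\infty}| + 2|\lambda(X)|$ stated just before the proposition, so that the difference of the two $\lambda_{a_\infty}$-sequences is captured by a term in $S(F) \subset S(I)$.
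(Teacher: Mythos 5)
Your normal-operator case is sound and tracks the paper's argument closely: the identity $\lambda(X)_a+\lambda(X)_{a_\infty}=(Tr\,X)\omega$ handles the case $\omega\in\Sigma(I)$, and the reduction to $X-(Tr\,X)P\in[I,B(H)]$ via Lemma~\ref{L:nonsingular extension criteria} (together with a careful account of the reordering, which the remark preceding the proposition justifies) handles $\omega\notin\Sigma(I)$. You do omit the quasinilpotent subcase $\lambda(X)=0$, but that is routine.

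The genuine gap is in the geometrically stable case, and you flag it yourself. Your plan is to take an \emph{arbitrary} rank-one $P$ and then relate $\lambda(X-(Tr\,X)P)_{a_\infty}$ to $\lambda(X)_{a_\infty}$ via Weyl's inequality plus the reordering estimate. This will not close the gap: Weyl's inequality controls eigenvalues of a single operator by its own $s$-numbers, and the reordering estimate only compares different orderings of the \emph{same} multiset of eigenvalues. Neither tool tracks how the eigenvalue sequence of a non-normal compact changes under a rank-one perturbation, and it genuinely can change drastically. The missing idea is that the ``well-chosen $P$'' trick is not special to the normal case. The paper takes $P$ to be a rank-one projection onto an \emph{eigenvector of $X$} for the eigenvalue $\lambda(X)_1$ and then invokes the Dykema--Kalton block decomposition \cite[Proposition~2.1]{kDnK98}: $X$ is represented in $2\times2$ block form with an upper-triangular upper-left block (having $\lambda(X)_1$ in the $(1,1)$ slot) and quasinilpotent lower-right block. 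Since for compact upper-triangular operators the diagonal equals the eigenvalue sequence with algebraic multiplicity, the perturbation $X\mapsto Y=X-(Tr\,X)P$ changes only the $(1,1)$ entry and one reads off $\lambda(Y)=\langle\lambda(X)_1-Tr\,X,\lambda(X)_2,\lambda(X)_3,\dots\rangle$ (up to monotonization in modulus) exactly as in the normal case. Normality versus geometric stability then enters only at the final step, in choosing between Theorem~\ref{T:DFWW} and Kalton's extension to conclude $Y\in[I,B(H)]\iff\lambda(Y)_a\in S(I)$. Without this decomposition your argument does not go through for non-normal $X$.
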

\begin{proof}
Assume first $\omega \in \Sigma(I)$ and so, by Theorem \ref{T:DFWW}, $F \subset [I,B(H)]$. 
Because $\lambda(X)_a + \lambda(X)_{a_\infty} = (Tr~X)\omega$, 
one sees that $\lambda(X)_{a_\infty} \in S(I)$ 
if and only if 
$\lambda(X)_a \in S(I)$, which, 
by Theorem \ref{T:DFWW} if $X$ is normal or by \cite{nK98} if $I$ is geometrically stable, is then equivalent to the condition $X \in [I,B(H)]=F+[I,B(H)]$.

Assume now that $\omega \notin \Sigma(I)$.
In case $X$ is quasinilpotent, 
i.e., 
$\lambda(X) = 0$, 
then $\lambda(X)_{a_\infty} = \lambda(X)_a = 0$ are in $S(I)$ 
and so, 
by 
Theorem \ref{T:DFWW}, if $X$ is normal or, by \cite{nK98}, 
if $I$ is geometrically stable, one has that $X \in [I,B(H)]$.
On the other hand, if $\lambda(X) \neq 0$, let $P$ be a rank one projection on an eigenvector of $X$ corresponding to the eigenvalue $\lambda(X)_1$. 
From the proof of Lemma \ref{L:nonsingular extension criteria}(ii) and by Lemma \ref{L:nonsingular extension criteria}(i), 
$X \in F+[I,B(H)]$ if and only if $Y := X - (Tr~X)P \in [I,B(H)]$. 
Now, by Theorem \ref{T:DFWW}, if $X$ and hence $Y$ are normal or, by \cite{nK98}, 
if $I$ is geometrically stable, 
$Y \in [I,B(H)]$ if and only if $\lambda(Y)_a \in S(I)$. 
$X$ can 
be represented as a $2 \times 2$ block matrix where the upper left block is upper triangular 
(and $\lambda(X)_1$ lies in its $(1,1)$ position) 
and the lower right block is quasinilpotent \cite[Proposition 2.1]{kDnK98}.
Also it is known that for compact upper triangular operators, the diagonal sequence is precisely the eigenvalue sequence repeated by algebraic multiplicity.
Therefore an eigenvalue sequence of $Y$ counting multiplicity is 
\[
<\lambda(X)_1-Tr~X,\lambda(X)_2,\lambda(X)_3,\dots>.
\]
Thus a monotonization in modulus of this sequence is given by
\[
\lambda(Y) = 
\begin{cases}
<\lambda(X)_2,\lambda(X)_3,\dots,\lambda(X)_p,\lambda(X)_1-Tr~X,\lambda(X)_{p+1},\dots> \\
\hspace{30pt} \text{for some $p\geq 1$ if $\lambda(X)_1 \neq Tr~X$} \\
<\lambda(X)_2,\lambda(X)_3,\dots> \indent \text{if $\lambda(X)_1 = Tr~X$}
\end{cases}
\]
where for $p=1$ we mean $\lambda(Y)=~ <\lambda(X)_1-Tr~X,\lambda(X)_2,\lambda(X)_3,\dots>$. Then
\[
\sum_{1}^{n}\lambda(Y)_j = 
\begin{cases}
\sum_{1}^{n}\lambda(X)_j - Tr~X = -\sum_{n+1}^{\infty}\lambda(X)_j &\text{if $\lambda(X)_1 \neq Tr~X$} \\
												&\text{for $n \geq p$} \\
\sum_{2}^{n+1}\lambda(X)_j = -\sum_{n+2}^{\infty}\lambda(X)_j	&\text{if $\lambda(X)_1 = Tr~X$} \\
												 &\text{for $n=1,2,\dots$.}
\end{cases}
\]
So, in either case, $\lambda(Y)_a \in S(I)$ if and only if $\lambda(X)_{a_\infty} \in S(I)$.
\end{proof}

Now we can link the cones of positive operators $(_{a_\infty}I)^+$ and $(_aI)^+$ to $(F + [I,B(H)])^+$.

\begin{corollary}\label{C:(F+[])+} Let $I\ne \{0\}$ be an ideal.
\item[(i)] If $\omega \notin \Sigma(I)$, then $(F + [I,B(H)])^+ = (_{a_\infty}I)^+$.
\item[(ii)] If $\omega \in \Sigma(I)$, then $(F + [I,B(H)])^+ = (_aI)^+$.
\item[(iii)] $(F + [I,B(H)])^+$ is hereditary (i.e., solid).
\end{corollary}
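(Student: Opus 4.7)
The plan is to dispose of (ii) first via a direct identification, to extract (i) from Proposition \ref{P:F+[]} together with the ``automatic summability'' provided by Proposition \ref{P:(L1+[])+=L1+}, and then to deduce (iii) from (i) and (ii) since the positive cone of any ideal is trivially solid.

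For (ii), assume $\omega \in \Sigma(I)$. Then $F \subset [I,B(H)]$ by the consequence of Theorem~\ref{T:DFWW} recalled at the end of Section~2, so $F + [I,B(H)] = [I,B(H)]$, and Theorem~\ref{T:DFWW} in its pre-arithmetic mean form gives $[I,B(H)]^+ = (_aI)^+$.

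For (i), assume $\omega \notin \Sigma(I)$, which also gives $\omega \notin \Sigma(seI)$ since $seI \subset I$. Take $0 \leq X \in F+[I,B(H)]$. Since $F \subset \mathscr L_1$, we have $X \in \mathscr L_1 + [I,B(H)]$, so by Proposition~\ref{P:(L1+[])+=L1+} we get $X \in \mathscr L_1$. Since $F+[I,B(H)] \subset I$, also $X \in I$, hence $X \in \mathscr L_1 \cap I$. The operator $X$ being positive is in particular normal, so Proposition~\ref{P:F+[]} applies and yields $\lambda(X)_{a_\infty} \in S(I)$. But for $X \geq 0$, $\lambda(X) = s(X)$, so $s(X)_{a_\infty} \in S(I)$, which by Definition~\ref{D:ainftyI,Iainfty} is precisely $X \in\,_{a_\infty}I$. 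Conversely, if $0 \leq X \in\,_{a_\infty}I$, then $X \in \mathscr L_1 \cap I$ and $\lambda(X)_{a_\infty} = s(X)_{a_\infty} \in S(I)$, so Proposition~\ref{P:F+[]} in the reverse direction gives $X \in F+[I,B(H)]$.

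Part (iii) is then immediate from (i) and (ii): in each of the two cases $(F+[I,B(H)])^+$ coincides with $J^+$ for an ideal $J$ ($J=\,_aI$ or $J=\,_{a_\infty}I$), and the positive part of an ideal is automatically hereditary because ideals are closed under the order $0 \leq Y \leq X \in J \Rightarrow Y \in J$. There is no serious obstacle here; the only point that needs care is invoking Proposition~\ref{P:(L1+[])+=L1+} with the right hypothesis, namely that $\omega \notin \Sigma(I)$ already forces $\omega \notin \Sigma(seI)$.
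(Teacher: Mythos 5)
Your proof is correct and follows essentially the same route as the paper: part (ii) via $F \subset [I,B(H)]$ and the pre-arithmetic-mean form of Theorem \ref{T:DFWW}, part (i) by combining Proposition \ref{P:(L1+[])+=L1+} (to get summability, correctly noting $\omega \notin \Sigma(I)$ forces $\omega \notin \Sigma(seI)$) with Proposition \ref{P:F+[]} for the two directions, and part (iii) from the solidity of positive cones of ideals. The only detail you gloss over is that $s(X)_{a_\infty} \in S(I)$ is the same as $s(X)_{a_\infty} \in \Sigma(I)$ because $s(X)_{a_\infty}$ is already nonnegative and nonincreasing, but this is immediate.
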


\begin{proof}
\item[(i)] By Proposition \ref{P:(L1+[])+=L1+}, 
$(F + [I,B(H)])^+ \subset \mathscr L_1^+$ and so by Proposition \ref{P:F+[]}, 
\begin{center}$X \in (F + [I,B(H)])^+$ if and only if $X \in (\mathscr L_1 \cap I)^+$ and $\lambda(X)_{a_\infty} \in \Sigma(I)$,\end{center} 
i.e., $X \in (_{a_\infty}I)^+$.
\item[(ii)] That $\omega \in \Sigma(I)$ implies $F \subset [I,B(H)]$
and $[I,B(H)]^+=(_aI)^+$ both
follow from 
Theorem \ref{T:DFWW} (see also its succeeding reformulation for positive operators).
\item[(iii)] This is immediate from (i) and (ii) since the positive cone of an ideal is hereditary.
\end{proof}

\noindent So, for instance, by combining (i) with the proof of Proposition \ref{P: stabilizers with logs}(ii) one obtains 
$(F+[\mathscr{L}_1,B(H)])^+ = (_{a_\infty}(\mathscr L_1))^+ = \mathscr L(\sigma(log))^+$ 
(the positive cone of a Lorentz ideal).

\begin{remark}\label{R: X,|X|,X oplus -X,(X)}
\item[(a)] 
In \cite[Theorem 5.11(i)]{DFWW} it is shown that $|X| \in [I, B(H)]$ if and only if \linebreak
$(X) \subset [I,B(H)]$. 
The proof depends on $[I,B(H)]^+ = (_aI)^+$ being hereditary. 
By the same argument combined with Corollary \ref{C:(F+[])+}(iii), it follows that 
\[
|X| \in F+[I,B(H)] \quad \text{if and only if}\quad (X) \subset F+[I,B(H)].
\]
\item[(b)] $X \in [I,B(H)]$ implies $|X| \in [I,B(H)]$ 
(resp., if $\omega \notin \Sigma(I)$, $X \in F+[I,B(H)]$ implies $|X| \in F+[I,B(H)]$) if and only if $I$ is am-stable (resp., $I$ is am-$\infty$ stable).
The condition is sufficent: if $I$ is am-stable (resp., am-$\infty$ stable), 
then $[I,B(H)]=I$ (resp., $F+[I,B(H)]=I$) is an ideal and for ideals, containment of $X$ and $|X|$ are equivalent. 
The condition is necessary: 
for every $Y \in I$ and in particular for every $Y \in I^+$,
\[
Y \oplus (-Y) ~=~
\begin{pmatrix}
0	&Y \\
0	&0
\end{pmatrix}
\begin{pmatrix}
0	&0 \\
I	&0
\end{pmatrix}
-
\begin{pmatrix}
0	&0 \\
I	&0
\end{pmatrix}
\begin{pmatrix}
0	&Y \\
0	&0
\end{pmatrix}
~\in [I,B(H)]
\]
where $H$ is identified with $H \oplus H$ and $I$ is identified with $M_2(I)$, 
the set of $2 \times 2$ matrices with entries in $I$. 
Thus by assumption 
$|Y \oplus (-Y)| = Y \oplus Y \in [I,B(H)]$, and $Y \oplus ~0 \in [I,B(H)]$ for every $Y \in I^+$ by hereditariness. 
Since every positive $X \in I$ is unitarily equivalent to $Y \oplus 0 + 0 \oplus Z$ for some $Y,Z \in I^+$, it follows that
$I^+ \subset [I,B(H)]^+ = (_aI)^+$. Thus $I = ~I_a$, i.e., $I$ is am-stable. 
The same argument shows that when $\omega \notin \Sigma (I)$, if $X \in F+[I,B(H)]$ implies $|X| \in F+[I,B(H)]$, 
then $I^+ \subset (F+[I,B(H)])^+ = (_{a_\infty}I)^+$ by Corollary \ref{C:(F+[])+}(i) and hence $I$ is am-$\infty$ stable.
\item[(c)] 
Notice that 
the same $2 \times 2$ matrix argument shows that $I$ is the smallest ideal containing $[I,B(H)]$.
\end{remark}

\begin{proposition}\label{P:omega notin I, codim, TFAE} For ideals $I$ and $J \ne \{0\}$ and arbitrary $J$, if $\omega \notin \Sigma(I)$,
then the following are equivalent.
\item(i) $dim~\frac{J+[I,B(H)]}{[I,B(H)]} = 1$
\item(ii) $J \subset F+[I,B(H)]$
\item(iii) $J \subset \mathscr L_1$ and $J \cap [I,B(H)] = \{X \in J \mid Tr~X=0\}$
\item(iv) $J \subset~_{a_\infty}I$
\item(iv$'$) $J \subset \mathscr L_1$ and $J_{a_\infty} \subset I$
\end{proposition}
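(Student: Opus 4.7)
The plan is to prove these five equivalences by assembling the previously developed tools, with the central observation being that $J\neq\{0\}$ forces $F\subset J$ (since the principal ideal generated by any nonzero finite-rank operator is $F$), while $\omega\notin\Sigma(I)$ forces $F\not\subset [I,B(H)]$ by Theorem \ref{T:DFWW}. Combined with Lemma \ref{L:nonsingular extension criteria}(ii), this says $\dim(F+[I,B(H)])/[I,B(H)] = 1$ and this quotient is generated by the class of any rank-one projection $P$.

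For (i)$\Leftrightarrow$(ii), observe that (ii) yields $(J+[I,B(H)])/[I,B(H)] \subset (F+[I,B(H)])/[I,B(H)]$, which has dimension $1$, while $F\subset J$ and $P\notin[I,B(H)]$ forces dimension at least $1$, giving (i). Conversely, under (i), the class $\bar P$ is a nonzero (and hence generating) element of the one-dimensional quotient $(J+[I,B(H)])/[I,B(H)]$, so this quotient coincides with $(F+[I,B(H)])/[I,B(H)]$, yielding (ii). For (ii)$\Leftrightarrow$(iv), apply Corollary \ref{C:(F+[])+}(i), which gives $(F+[I,B(H)])^+ = (_{a_\infty}I)^+$; since $_{a_\infty}I$ is an ideal and $F+[I,B(H)]$ is spanned by its positive cone (via hereditariness in Corollary \ref{C:(F+[])+}(iii) and the standard real/imaginary/positive/negative-part decomposition for operators in an ideal), the containment $J\subset F+[I,B(H)]$ reduces to $J^+\subset (_{a_\infty}I)^+$, which is equivalent to $J\subset {}_{a_\infty}I$. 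For (iv)$\Leftrightarrow$(iv$'$), invoke Proposition \ref{P:aminfty relations}(iii) directly: since $_{a_\infty}I\subset\mathscr L_1$, the condition $J\subset{}_{a_\infty}I$ forces $J\subset\mathscr L_1$, whence $J\cap\mathscr L_1 = J$, and the cited proposition says $J_{a_\infty}\subset I$ iff $J\cap\mathscr L_1\subset{}_{a_\infty}I$.

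For (ii)$\Rightarrow$(iii), first deduce $J\subset\mathscr L_1$ from the already-established (ii)$\Rightarrow$(iv) combined with $_{a_\infty}I\subset\mathscr L_1$. The inclusion $J\cap[I,B(H)]\subset\ker\,\text{Tr}|_J$ is immediate from Lemma \ref{L:nonsingular extension criteria}(i). Conversely, given $X\in J$ with $\text{Tr}\,X=0$, write $X=T+C$ with $T\in F$, $C\in[I,B(H)]$; since $X,T\in\mathscr L_1$ one has $C\in\mathscr L_1\cap[I,B(H)]$, so $\text{Tr}\,C=0$ by Lemma \ref{L:nonsingular extension criteria}(i), hence $\text{Tr}\,T=0$, so $T\in[F,B(H)]\subset[I,B(H)]$ by the Halmos characterization of zero-trace finite matrices as commutators, giving $X\in[I,B(H)]$. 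Finally, for (iii)$\Rightarrow$(i), note that since $F\subset J\subset\mathscr L_1$ and $\text{Tr}$ is surjective on $F$, the functional $\text{Tr}|_J$ is a surjection onto $\mathbb C$ whose kernel, by (iii), equals $J\cap[I,B(H)]$; so $\dim J/(J\cap[I,B(H)]) = 1$, and the canonical isomorphism $J/(J\cap[I,B(H)]) \cong (J+[I,B(H)])/[I,B(H)]$ delivers (i).

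The main obstacle is essentially bookkeeping — making sure that both implications (ii)$\Rightarrow$(iii) and (iii)$\Rightarrow$(ii) have nontrivial content despite looking similar (the forward direction needs the Halmos commutator fact and $J\subset\mathscr L_1$, while the reverse goes through the codimension count rather than a direct inclusion), and tracking carefully where the hypotheses $J\neq\{0\}$ (so that $F\subset J$) and $\omega\notin\Sigma(I)$ (so that $F\not\subset[I,B(H)]$ and Corollary \ref{C:(F+[])+}(i) applies) are each used.
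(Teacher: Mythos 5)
Your proof is correct and follows essentially the same route as the paper's, with one small variation and one incorrect side remark worth flagging.

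The variation: the paper closes the cycle by proving (iii) $\Rightarrow$ (ii) directly (given $X \in J$, one shows $X - (\mathrm{Tr}\,X)P \in [I,B(H)]$, hence $X \in F + [I,B(H)]$), whereas you prove (iii) $\Rightarrow$ (i) via the first and second isomorphism theorems applied to $\mathrm{Tr}|_J$. Both are valid; yours is arguably cleaner since it makes the ``codimension $= 1$'' count transparent, while the paper's keeps all the work inside $F+[I,B(H)]$. The rest of the argument — the dimension identity resting on Lemma~\ref{L:nonsingular extension criteria}(ii), the positive-cone reduction for (ii) $\Leftrightarrow$ (iv) via Corollary~\ref{C:(F+[])+}(i), the appeal to Proposition~\ref{P:aminfty relations}(iii) for (iv) $\Leftrightarrow$ (iv$'$), and the Halmos commutator fact in (ii) $\Rightarrow$ (iii) — matches the paper step for step.

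The imprecision: in the (ii) $\Leftrightarrow$ (iv) step you assert that $F+[I,B(H)]$ ``is spanned by its positive cone.'' That is false in general. Since $(F+[I,B(H)])^+ = ({}_{a_\infty}I)^+$, the span of that positive cone is exactly the ideal ${}_{a_\infty}I$, so ``$F+[I,B(H)]$ is spanned by its positive cone'' would force $F+[I,B(H)] = {}_{a_\infty}I$, i.e.\ $[I,B(H)]\subset {}_{a_\infty}I$, which (by Remark~\ref{R: X,|X|,X oplus -X,(X)}(b),(c)) holds if and only if $I$ is am-$\infty$ stable — not part of the hypotheses. Fortunately the claim is not load-bearing: the backward implication $J^+ \subset (F+[I,B(H)])^+ \Rightarrow J \subset F+[I,B(H)]$ only needs that $J$, being an ideal, equals the span of $J^+$ (and that $F+[I,B(H)]$ is a linear space). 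You should delete the spurious claim and instead appeal to $J = \mathrm{span}\,J^+$, as the paper implicitly does.
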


\begin{proof}
\item[(i) $\Leftrightarrow$ (ii)] Immediate from Lemma \ref{L:nonsingular extension criteria}(ii) and the identity:
\[
dim~\frac{J + [I,B(H)]}{[I,B(H)]} = dim~\frac{J + [I,B(H)]}{F + [I,B(H)]} + dim~\frac{F + [I,B(H)]}{[I,B(H)]}.
\]
\item[(ii) $\Leftrightarrow$ (iv)] 
This follows from the equivalences $J \subset F + [I,B(H)]$ if and only if \linebreak
$J^+ \subset (F + [I,B(H)])^+ = (_{a_\infty}I)^+$ by Corollary \ref{C:(F+[])+}(i), 
if and only if $J \subset ~_{a_\infty}I$.  
\item[(iv) $\Leftrightarrow$ (iv$'$)] See Corollary \ref{P:aminfty relations}(iii). 
\item[(ii) $\Rightarrow$ (iii)] 
$J \subset \mathscr L_1$ since (ii) implies (iv$'$), hence
\[J \cap [I,B(H)] \subset \{X \in J \mid Tr~X = 0\}\] 
follows from 
Lemma \ref{L:nonsingular extension criteria}(i) without needing to invoke hypothesis (ii).
For the reverse inclusion, let $X \in J$ and $Tr~X = 0$. By (ii), $X - T \in [I,B(H)]$ for some $T \in F$, 
hence $Tr(X-T) = 0$ by the previous inclusion, and therefore $Tr~T = 0$. 
Then $T \in [F,B(H)] \subset [I,B(H)]$ as seen in the proof of Lemma \ref{L:nonsingular extension criteria}(ii), hence 
$X \in [I,B(H)]$ and thus (iii) holds.
\item[(iii) $\Rightarrow$ (ii)] Let $X \in J$ and let $P$ be a rank one projection.
But then \\ 
$Tr(X - (Tr~X)P)=0$, hence $X - (Tr~X)P \in [I,B(H)]$, and thus $X \in F + [I,B(H)]$.
\end{proof}

\noindent The equivalence of (iii) and (iv$'$) also follows from \cite[Theorem 5.11(iii)]{DFWW}. 

A special case is when $I=J$ is a principal ideal, and then Proposition \ref{P:omega notin I, codim, TFAE} subsumes \cite[Corollary 5.19]{DFWW}. 
Another special case is when $J=\mathscr L_1$, i.e., 
\[\mathscr L_1 \cap [I,B(H)] = \{X\in \mathscr L_1 \mid Tr~X=0\} ~\text{if and only if}~ \mathscr L_1 = ~_{a_{\infty}}I ~(\text{since}~ _{a_{\infty}}I\subset \mathscr L_1),\] 
which by
Corollary \ref{C: ainftyI = L1 if and only if se(omega) subset I}(i) is equivalent to the condition $se(\omega) \subset I$.

The analog below of Proposition \ref{P:omega notin I, codim, TFAE} for the case when $\omega \in \Sigma(I)$ is simpler and its proof is left to the reader. 
The equivalence of (iii) and (iii$'$) is a simple consequence of the five chain of inclusions presented in Section 2.

\newpage

\begin{proposition}\label{P:omega in I, codim, TFAE} For ideals $I$ and $J$, if $\omega \in \Sigma(I)$, then the following conditions are equivalent.
\item[(i)] $dim~\frac{J+[I,B(H)]}{[I,B(H)]} = 0$
\item[(ii)] $J\subset [I,B(H)]$
\item[(iii)] $J \subset ~_aI$
\item[(iii$'$)] $J_a \subset I$
\end{proposition}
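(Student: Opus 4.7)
The plan is to verify the four equivalences in a short cycle, exploiting results already assembled in Section~2. The hypothesis $\omega\in\Sigma(I)$ will be used only lightly, namely to conclude that $F\subset [I,B(H)]$, though in fact the equivalences (ii)--(iii$'$) are true for any ideal $I$ because they rest on the general properties of the pre-arithmetic mean recalled in Section~2.

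First, (i)$\Leftrightarrow$(ii) is immediate: a quotient vector space has dimension zero precisely when its numerator equals its denominator, i.e., $J\subset [I,B(H)]$.

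Next, for (ii)$\Leftrightarrow$(iii), I would invoke the basic properties listed in Section~2 just after the reformulation $[I,B(H)]^+=(_aI)^+$: the pre-arithmetic mean $_aI$ is the largest ideal contained in $[I,B(H)]$, and obviously $_aI\subset[I,B(H)]$. Thus any ideal $J$ sits inside $[I,B(H)]$ if and only if it sits inside $_aI$.

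Finally, for (iii)$\Leftrightarrow$(iii$'$), I would use the 5-chain of inclusions $_aK\subset K^o\subset K\subset K^-\subset K_a$, recalling that $K^o=(_aK)_a$ and $K^-=\,_a(K_a)$. If $J\subset{}_aI$ then, applying the inclusion-preserving $a$ operation and the 5-chain to $I$,
\[
J_a\subset (_aI)_a=I^o\subset I.
\]
Conversely, if $J_a\subset I$, then applying the inclusion-preserving pre-$a$ operation and the 5-chain to $J$,
\[
J\subset J^-=\,_a(J_a)\subset{}_aI.
\]
This closes the cycle. I do not expect any genuine obstacle here: the whole argument is a bookkeeping exercise on the tools already in place, with the only point meriting mention being that (ii)$\Leftrightarrow$(iii) really requires nothing more than the maximality of $_aI$ among ideals inside $[I,B(H)]$, and that the 5-chain gives (iii)$\Leftrightarrow$(iii$'$) with no recourse to the hypothesis $\omega\in\Sigma(I)$.
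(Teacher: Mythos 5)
Your proof is correct and follows exactly the route the paper indicates: (i)$\Leftrightarrow$(ii) is the trivial identification of a zero-dimensional quotient, (ii)$\Leftrightarrow$(iii) uses the fact recorded in Section~2 that $_aI$ is the largest ideal contained in $[I,B(H)]$, and (iii)$\Leftrightarrow$(iii$'$) follows from the 5-chain together with the inclusion-preserving property of the $a$ and pre-$a$ operations, which is precisely the hint the paper gives when it leaves the proof to the reader. Your side remark that the hypothesis $\omega\in\Sigma(I)$ is not really needed for the equivalences is also accurate (its role is only to place this proposition as the counterpart to Proposition~\ref{P:omega notin I, codim, TFAE}), though the parenthetical about $F\subset[I,B(H)]$ plays no role in your argument and could be dropped.
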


Proposition \ref{P:F+[]} and Corollary \ref{C:(F+[])+} allow us to characterize the ideals with $\omega \notin \Sigma(I)$ that support a unique trace up to scalar multiples. 

\begin{theorem}\label{T:i-iii} 
If $I\ne \{0\}$ is an ideal where $\omega \notin \Sigma(I)$, then the following are equivalent.
\item[(i)]  $I$ supports a nonzero trace unique up to scalar multiples.
\item[(ii)]  $I \subset \mathscr L_1$ and every trace on $I$ is a scalar multiple of $Tr$.
\item[(iii)] $dim ~\frac{I}{[I,B(H)]} = 1$
\item[(iv)] $I = F + [I,B(H)]$
\item[(v)]  $I \subset \mathscr L_1$ and $[I,B(H)]=\{X\in I \mid Tr\,X=0\}$
\item[(vi)]  $I$ is am-$\infty$ stable, i.e., $I=~_{a_\infty}I$.
\end{theorem}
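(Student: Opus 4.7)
The plan is to derive the theorem almost entirely from Proposition \ref{P:omega notin I, codim, TFAE} specialized to $J=I$, together with the elementary identification of traces as linear functionals on the quotient $I/[I,B(H)]$ and a couple of observations about $Tr$.

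First I would dispose of (i) $\Leftrightarrow$ (iii). Since traces on $I$ are exactly the linear functionals on $I/[I,B(H)]$, the space of traces has dimension equal to $\dim I/[I,B(H)]$, and the existence of a nonzero trace unique up to scalars is precisely the statement that this dimension equals $1$. Next, I would apply Proposition \ref{P:omega notin I, codim, TFAE} with $J=I$. Note that $F+[I,B(H)] \subset I$ holds for any nonzero ideal (since $F$ is the smallest nonzero ideal and $[I,B(H)] \subset I$), and $_{a_\infty}I \subset I$ by Proposition \ref{P:aminfty relations}(i). Thus conditions (ii) and (iv) of that proposition (applied to $J=I$) read $I = F+[I,B(H)]$ and $I = \,_{a_\infty}I$ respectively; these are (iv) and (vi) here. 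The equivalence with condition (iii) of that proposition gives (iv) $\Leftrightarrow$ (v), after noting that $[I,B(H)] \subset I$ makes $I \cap [I,B(H)] = [I,B(H)]$. At this stage (iii), (iv), (v), and (vi) are mutually equivalent.

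Now I would close the loop through (ii). The implication (ii) $\Rightarrow$ (i) is immediate provided $Tr$ itself is a nonzero trace on $I$: since $I \ne \{0\}$ forces $F \subset I$ and $Tr$ is nonzero on $F$, this is automatic once we know $I \subset \mathscr L_1$, which is part of (ii). Conversely, for (vi) $\Rightarrow$ (ii), Corollary \ref{C: a-infty stability equivalences} gives $I \subset \mathscr L_1$, so $Tr$ is well-defined on $I$; it is a nonzero trace because $F \subset I$, and it vanishes on $[I,B(H)]$ by Lemma \ref{L:nonsingular extension criteria}(i). Since (vi) implies (iii), the trace space is one-dimensional, so every trace must be a scalar multiple of $Tr$, which is (ii).

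The only real content is already packaged in Proposition \ref{P:omega notin I, codim, TFAE} and Corollary \ref{C:(F+[])+}; the rest is bookkeeping. The one place to be a little careful is the hypothesis $I \ne \{0\}$, which is needed twice: to guarantee $F \subset I$ (so that $Tr$ is a nonzero functional on $I$ once $I \subset \mathscr L_1$) and to make the am-$\infty$ stability condition (vi) nontrivial. There is no substantial obstacle, since all the hard work (the identification of $(F+[I,B(H)])^+$ with $(_{a_\infty}I)^+$, and hence the equivalence (iv) $\Leftrightarrow$ (vi)) has already been done.
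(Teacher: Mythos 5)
Your proof is correct and rests on the same core result as the paper, namely Proposition \ref{P:omega notin I, codim, TFAE} with $J=I$ for the (iii)--(vi) equivalences; the paper closes the loop by linking (i) to (iv) via the trace-extension characterization in Proposition \ref{P:trace extension}, whereas you link (i) to (iii) directly through the identification of traces with linear functionals on $I/[I,B(H)]$ (an identification the paper itself records in Section~2), so the difference is only bookkeeping. One small imprecision worth flagging: the claim that the space of traces has dimension \emph{equal} to $\dim I/[I,B(H)]$ is false in general, since the algebraic dual of an infinite-dimensional space has strictly larger dimension; the conclusion you actually need --- that a nonzero trace unique up to scalar multiples exists if and only if $\dim I/[I,B(H)]=1$ --- nevertheless holds, because dimension $0$ gives only the zero trace, dimension $1$ gives a one-dimensional dual, and dimension $\geq 2$ gives a dual of dimension $\geq 2$.
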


\begin{proof}
The equivalence of (iii)-(vi) is the case $J=I$ in Proposition \ref{P:omega notin I, codim, TFAE}. 
The equivalence of (i) and (iv) follows from the case $J=F$ and $\tau = Tr$ in Proposition \ref{P:trace extension} which provides both the existence of a nonsingular trace on $I$ and the condition for its uniqueness. 
Since $_{a_\infty}I \subset \mathscr L_1$, 
(i) and (vi) imply (ii) and (ii) trivially implies (i). 
\end{proof}

\begin{remark}\label{R:IMAX}
By the remarks following Definition \ref{D: stabilizers}, 
$st_{a_\infty}(\mathscr{L}_1)$
is the largest am-$\infty$ stable ideal, so by Theorem \ref{T:i-iii}, 
it is the largest ideal not containing $(\omega)$ that has a nonzero trace unique up to scalar multiples. 
We do not know whether or not an ideal $I$ containing $(\omega)$ can have a nonzero trace unique up to scalar multiples. 
However, nonuniqueness  
for large classes of ideals containing $(\omega)$ follows from Theorems \ref{T: BASICS: seJ notin F+[I,B(H)] implies uncountable dimension-(J+[I,B(H)])/(F+[I,B(H)])}, 
\ref{T: stabilizer containments and uncountable dimension}, 
Corollary \ref{C: soft-edged or soft-complemented}, and Theorem \ref{T: false converse for T: se or sc inf codim implies}. 
\end{remark}

As mentioned in the introduction, a principal ideal $(\xi)$ supports \textit{no nonzero trace} precisely when $\xi$ is 
\textit{regular}.
A similar characterization of the principal ideals supporting a \textit{unique nonzero trace} in terms of 
\textit{regularity at infinity} was obtained in \cite[Corollary 5.6]{mW02}. It is also an immediate consequence of Theorem \ref{T:i-iii}.

\begin{corollary}\label{C:pi supports unique trace iff average at infinity in pi}
Let $\xi \in \text{c}_{\text{o}}^*$ and $\omega \notin \Sigma((\xi))$. 
Then $(\xi)$ supports a nonzero trace unique up to scalar multiples if and only if 
$\xi$ is $\infty$-regular. 
\end{corollary}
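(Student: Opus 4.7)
The plan is to apply Theorem \ref{T:i-iii} directly to the principal ideal $I=(\xi)$ and read off the equivalence from Definition \ref{D:am-infty stability/regularity}. Since by hypothesis $\omega \notin \Sigma((\xi))$, Theorem \ref{T:i-iii} is applicable, and it tells us that $(\xi)$ supports a nonzero trace unique up to scalar multiples if and only if $(\xi)$ is am-$\infty$ stable, i.e., $(\xi)=\,_{a_\infty}(\xi)$. But this is precisely the definition of $\xi$ being $\infty$-regular given in Definition \ref{D:am-infty stability/regularity}.

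So the proof is essentially a one-line citation: combine the equivalence (i)$\Leftrightarrow$(vi) of Theorem \ref{T:i-iii} with the definition of $\infty$-regularity. There is no real obstacle; the only slight subtlety is that one might wish to make explicit that the am-$\infty$-stability condition for the principal ideal $(\xi)$ coincides with $(\xi)=(\xi_{a_\infty})$ when $\xi \in \ell^1$ (which holds here since $\omega \notin \Sigma((\xi))$ forces $\xi \in \ell^1$ — otherwise $\xi \asymp D_m \xi$ would contradict $\omega \notin \Sigma((\xi))$ only in some cases, so more carefully one uses Corollary \ref{C: a-infty stability equivalences} together with Lemma \ref{L:(xi) subset (xiainfty)} to identify $(\xi)_{a_\infty}$ with $(\xi_{a_\infty})$ in the summable case).

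In summary, I would write: assume $\omega \notin \Sigma((\xi))$. Then by Theorem \ref{T:i-iii}, $(\xi)$ supports a nonzero trace unique up to scalar multiples if and only if $(\xi)=\,_{a_\infty}(\xi)$, which by Definition \ref{D:am-infty stability/regularity} is precisely the condition that $\xi$ be $\infty$-regular. (One may additionally remark, using Corollary \ref{C: a-infty stability equivalences} and Lemma \ref{L:(xi) subset (xiainfty)}, that this is equivalent to $(\xi)=(\xi_{a_\infty})$, and by Theorem \ref{T: Theorem 4.12} to the simpler condition $\xi_{a_\infty}=O(\xi)$.)
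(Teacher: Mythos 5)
Your proof is correct and follows exactly the paper's approach: the corollary is stated in the paper to be ``an immediate consequence of Theorem \ref{T:i-iii},'' and your reading via the equivalence (i)$\Leftrightarrow$(vi) of Theorem \ref{T:i-iii} combined with Definition \ref{D:am-infty stability/regularity} is precisely that. One small factual slip in your parenthetical aside: $\omega\notin\Sigma((\xi))$ does \emph{not} force $\xi\in\ell^1$ (for instance $\xi_n=\frac{1}{n\log n}$ for $n\geq 2$ is nonsummable and yet $\omega\notin\Sigma((\xi))$, since $\liminf n\xi_n=0$). This does not damage your argument, though: if $\xi\notin\ell^1$ then $(\xi)\not\subset\mathscr L_1$, so $(\xi)$ is not am-$\infty$ stable by Corollary \ref{C: a-infty stability equivalences}(iii), and $\xi$ is not $\infty$-regular because $\infty$-regularity is defined only for $\xi\in(\ell^1)^*$; hence both sides of the stated equivalence fail and it holds vacuously. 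The main summable case is exactly the one-line citation you give.
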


As remarked after Definition \ref{D:am-infty stability/regularity}, 
a sequence $\xi \in (\ell^1)^*$ is $\infty$-regular precisely when $(\xi)=(\xi_{a_\infty})$ or, equivalently, 
$\xi_{a_\infty} = O(\xi)$ (see Theorem \ref{T: Theorem 4.12}). 
Moreover, by Remark \ref{R:IMAX} such a sequence must be contained in 
$\Sigma(st_{a_\infty}(\mathscr{L}_1))$ and hence $\sum_{n=1}^\infty \xi_n \,log^m n < \infty$ for every $m$ (see remarks succeeding Proposition \ref{P: stabilizers with logs}).

\newpage

\section{\leftline{\bf Infinite Codimension}}

In this section we present some conditions under which $[I,B(H)]$ has infinite codimension in $I$.
First notice that by setting $I=J$ in the identity in the proof of (i) $\Leftrightarrow$ (ii) 
in Proposition \ref{P:omega notin I, codim, TFAE},
$[I,B(H)]$ has minimal codimension in $I$ precisely when $I = F+[I,B(H)]$. 

Thus if $\omega \in \Sigma(I)$, the codimension is zero precisely when $I$ is am-stable (Theorem \ref{T:DFWW}), 
and if $\omega \notin \Sigma(I)$, the codimension is one precisely when $I$ is am-$\infty$ stable (Theorem \ref{T:i-iii}).
We conjecture that in all other cases, i.e., whenever $I \ne F+[I,B(H)]$,
the codimension of $[I,B(H)]$ in $I$ is infinite, i.e., that 
\[
dim\frac{I}{[I,B(H)]} \in 
\begin{cases} 
\{1,\infty\}	&\text{when}~\omega \notin \Sigma(I) \\
\{0,\infty\}	&\text{when}~\omega \in \Sigma(I).
\end{cases}
\]

In order to verify this conjecture for various classes of ideals, 
we depend on the following result.

\begin{theorem}\label{T: BASICS: seJ notin F+[I,B(H)] implies uncountable dimension-(J+[I,B(H)])/(F+[I,B(H)])} 
If $I$ and $J$ are ideals and $seJ \not\subset F+[I,B(H)]$ then 
$\frac{J+[I,B(H)]}{F+[I,B(H)]}$ has uncountable dimension.
  
In particular, if $seI \not\subset F+[I,B(H)]$  
then $\frac{I}{[I,B(H)]}$ has uncountable dimension.
\end{theorem}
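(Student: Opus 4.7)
The plan is to manufacture, from a single witness $X \in seJ \setminus (F+[I,B(H)])$, an uncountable linearly independent family in the quotient $\frac{J+[I,B(H)]}{F+[I,B(H)]}$ by distributing the mass of $X$ along an almost disjoint family $\{A_\lambda\}_{\lambda \in \Lambda}$ of infinite subsets of $\mathbb N$ with $|\Lambda|=2^{\aleph_0}$ (e.g.\ via branches of the infinite binary tree).

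First I would reduce to a positive diagonal witness. The contrapositive of Remark \ref{R: X,|X|,X oplus -X,(X)}(a)---which rests on the hereditariness of $(F+[I,B(H)])^+$ from Corollary \ref{C:(F+[])+}(iii)---forces $|X| \notin F+[I,B(H)]$, while $|X| \in seJ$ since $seJ$ is an ideal. Unitary invariance of $F+[I,B(H)]$ then allows me to diagonalize $|X|$ and assume $X = diag\,\eta$ with $\eta \in \Sigma(seJ)$. Setting $K := \,_{a_\infty}I$ when $\omega \notin \Sigma(I)$ and $K := \,_aI$ otherwise, Corollary \ref{C:(F+[])+} identifies $(F+[I,B(H)])^+$ with $K^+$, so the hypothesis reads $\eta \notin \Sigma(K)$.

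Next I would spread $\eta$ out along the family. Enumerating $A_\lambda = \{n_1^\lambda < n_2^\lambda < \cdots\}$, define $\eta^{(\lambda)}$ by $\eta^{(\lambda)}_{n_k^\lambda} := \eta_k$ and $\eta^{(\lambda)}_n := 0$ for $n \notin A_\lambda$, and set $Y_\lambda := diag\,\eta^{(\lambda)}$. Since the nonzero entries rearrange back to $\eta$, $Y_\lambda$ is unitarily equivalent to $diag\,\eta \oplus 0$, so $Y_\lambda \in seJ \subset J + [I,B(H)]$. To establish linear independence modulo $F+[I,B(H)]$, suppose $S := \sum_{i=1}^n c_i Y_{\lambda_i} \in F+[I,B(H)]$ for distinct $\lambda_i$ and some $c_j \neq 0$; after a real/imaginary splitting assume $c_i \in \mathbb R$ with $c_1 > 0$. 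Let $A'_1 := A_{\lambda_1} \setminus \bigcup_{i \ge 2} A_{\lambda_i}$ (cofinite in $A_{\lambda_1}$ by almost disjointness), and let $Q_1$ be the coordinate projection onto $\overline{span}\{e_m : m \in A'_1\}$. Then $A'_1 \cap A_{\lambda_i} = \emptyset$ for $i \ge 2$ gives $Q_1 Y_{\lambda_i} Q_1 = 0$, whence $Q_1 S Q_1 = c_1\,Q_1 Y_{\lambda_1} Q_1$ is a positive diagonal operator whose $s$-number sequence equals $c_1\eta$ up to deletion of finitely many entries. Since $F \subset K$ and $K^+$ is hereditary, such a sequence belongs to $\Sigma(K)$ iff $\eta$ does, so $Q_1 S Q_1 \notin K^+$.

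The \emph{main obstacle} is upgrading this to the desired contradiction: $F+[I,B(H)]$ is not a two-sided ideal, so compression of one of its elements by a basis projection does not trivially remain in $F+[I,B(H)]$. I would overcome this by exploiting that $Q_1$ commutes with the diagonal $S$ and that $Q_1SQ_1 \ge 0$: the identification $(F+[I,B(H)])^+ = K^+$ reduces membership of the positive compression in $F+[I,B(H)]$ to membership of a positive diagonal operator in the genuine ideal $K$, and the ideal property of $K$ then forces $Q_1 S Q_1 \in K^+$, contradicting what was shown and yielding the sought linear independence. The ``in particular'' claim follows from Lemma \ref{L:nonsingular extension criteria}(ii), which bounds $\dim\!\left(\frac{F+[I,B(H)]}{[I,B(H)]}\right) \le 1$, so uncountable codimension in the outer quotient transfers to uncountable codimension of $[I,B(H)]$ in $I$.
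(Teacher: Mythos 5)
Your reduction to a positive diagonal witness $diag\,\eta\in seJ\setminus(F+[I,B(H)])$ is sound and mirrors the paper's opening step, but the argument then diverges and runs into a gap that you flagged yourself and did not actually close. The crux requires passing from $S=\sum_i c_i Y_{\lambda_i}\in F+[I,B(H)]$ to $Q_1SQ_1\in F+[I,B(H)]$. As you observe, $F+[I,B(H)]$ is not a two-sided ideal, so compression by $Q_1$ is not automatically permitted. Your proposed repair is circular: the ideal property of $K$ would give $Q_1SQ_1\in K$ only if you already knew $S\in K$, but $S$ is a real (not positive) linear combination, and the identification $(F+[I,B(H)])^+=K^+$ says nothing about the non-positive $S$. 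Moreover, by Remark \ref{R: X,|X|,X oplus -X,(X)}(b), the implication $X\in F+[I,B(H)]\Rightarrow |X|\in F+[I,B(H)]$ holds precisely when $I$ is am-stable (resp.\ am-$\infty$ stable), which is exactly the situation excluded by the hypothesis; so you cannot pass from $S$ to $|S|$ or to $S^{+}$ either, and I do not see a way to salvage the almost-disjoint-family scheme without additional input.

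The paper sidesteps the obstruction by choosing the uncountable family so that hereditariness of the positive cone applies directly and nothing like compression is ever needed. Since $\eta\in\Sigma(seJ)$, one picks $\xi\in\Sigma(J)$ with $\eta=o(\xi)$ and $\eta\le\xi$, and forms the interpolants $\zeta(t):=\xi^t\eta^{1-t}\in\Sigma(J)$, $t\in[0,1]$. The crucial feature is $\zeta(s)=o(\zeta(t))$ for $s<t$, so a linear combination $\rho=\zeta(t_j)+\sum_{i<j}\lambda_i\zeta(t_i)$ is asymptotic to $\zeta(t_j)$, hence $\eta=o(\rho)$ and $\chi:=\max(\rho,\eta)$ differs from $\rho$ in only finitely many coordinates. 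Thus $diag\,\rho\in F+[I,B(H)]$ forces $diag\,\chi\in F+[I,B(H)]$, and since $0\le\eta\le\chi$, the hereditariness in Corollary \ref{C:(F+[])+}(iii) gives $diag\,\eta\in F+[I,B(H)]$, a contradiction. In short, the paper builds linear combinations that are \emph{eventually dominated below by $\eta$} so that only hereditariness of $(F+[I,B(H)])^+$ is needed on the span; your argument instead asks $F+[I,B(H)]$ to be stable under compression by coordinate projections, which fails in general.
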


\begin{proof}
Since $seJ = span\,(seJ)^+$ and $F+[I,B(H)]$ is a linear space, it follows that
$(seJ)^+ \not\subset F+[I,B(H)]$. 
Thus, let $X \in (seJ)^+ \setminus (F+[I,B(H)])$ and let $\eta = s(X)$ be the sequence of s-numbers of $X$.  
Since $X-UXU^* \in [I,B(H)]$ for every unitary $U$, 
$diag~\eta \in (seJ)^+ \setminus (F+[I,B(H)])$. 
By definition, $\eta = o(\xi)$ for some 
$\xi \in \Sigma(J)$ 
and without loss of generality assume that $\eta \leq \xi$. 
Also, $\eta_n > 0$ for all $n$ since $diag~\eta \notin F$. 
Define $\zeta(t) := \xi^t\eta^{1-t}$ for $t \in [0,1]$. 
Then since $\zeta(t) \in c_o^*$ and $\zeta(t) \leq \xi$, also $\zeta(t) \in \Sigma(J)$. 
We claim that for any choice of $0=t_0<t_1<\cdots<t_N=1$ the cosets $\{diag~\zeta(t_j)+F+[I,B(H)]\}_{j=0}^{N}$ are linearly independent in 
$\frac{J+[I,B(H)]}{F+[I,B(H)]}$. 
Indeed, assuming otherwise, 
$diag~(\zeta(t_j)+\sum_{i=0}^{j-1} \lambda_i \zeta(t_i)) \in F+[I,B(H)]$ 
for some $0<j \leq N$ and some constants $\lambda_i$, $i=0,1,\dots,j-1$.
Since $F+[I,B(H)]$ is a selfadjoint linear space and $\zeta(t_i)$ are real-valued 
sequences, one can choose all $\lambda_i$ to be real. 
Define $\rho = \zeta(t_j)+\sum_{i=0}^{j-1} \lambda_i \zeta(t_i)$ and set $\chi=\max(\rho,\eta)$. 
Since $\zeta(t_i) = o(\zeta(t_j))$ for $i=0,1,\dots,j-1$ and $\eta = o(\zeta(t_j))$, 
one has $\eta = o(\rho)$, so that $\chi_n=\rho_n$ for $n$ large enough. 
Thus $diag~(\rho-\chi) \in F$ and hence $diag~\chi \in F+[I,B(H)]$. 
Since $\chi \geq \eta$ and since $(F+[I,B(H)])^+$ is hereditary by Corollary \ref{C:(F+[])+}(iii), 
it follows that $diag~\eta \in F+[I,B(H)]$, against the hypothesis. 
Thus the cosets $\{diag~\zeta(t_j)+F+[I,B(H)]\}_{j=0}^{N}$ 
are linearly independent and so $\frac{J+[I,B(H)]}{F+[I,B(H)]}$ has uncountable dimension. 
This implies of course that in case $I=J$,  
$\frac{J+[I,B(H)]}{[I,B(H)]}$ also has uncountable dimension.
\end{proof}

Notice that the condition $seJ \not\subset F + [I,B(H)]$ is equivalent to 
\[seJ \not\subset \begin{cases}
_{a_\infty}I &\text{if $\omega \notin \Sigma(I)$} \\
_aI &\text{if $\omega \in \Sigma(I)$} 
\end{cases}.\]
Notice also that if $L \subset J$ is any ideal for which $(L+[I,B(H)])^+$ is hereditary, 
Theorem \ref{T: BASICS: seJ notin F+[I,B(H)] implies uncountable dimension-(J+[I,B(H)])/(F+[I,B(H)])} with the same proof 
remains valid if we substitute $L+[I,B(H)]$ for $F+[I,B(H)]$. 

In the following theorem, conditions (i) and (ii) are expressed in terms of the am-stability (resp., am-$\infty$ stability) of $seI$. 
Recall from 
Propositions \ref{P:TFAE-seI am-stable, scI am-stable, seI in Ipre-a, Ia in scI} 
and \ref{P: TFAE:seI am-stable, scI am-stable, seI in, scI contains} 
that this is equivalent to the am-stability (resp., am-$\infty$ stability) of $scI$.
Recall also that $st^a(\mathscr L_1)$ is the smallest am-stable ideal and that $st_{a_\infty}(\mathscr L_1)$ is the largest am-$\infty$ stable ideal 
(Definition \ref{D: stabilizers} and succeeding remarks).

\begin{theorem}\label{T: stabilizer containments and uncountable dimension}
Let $I \ne \{0\}$ be an ideal. 
Then $\frac{I}{[I,B(H)]}$ has uncountable 
dimension if any of the following conditions hold.
\item[(i)] $I \subset st_{a_\infty}(\mathscr L_1)$ and $seI$ is not am-$\infty$ stable.
\item[(ii)] $I \supset st^a(\mathscr L_1)$ and $seI$ is not am-stable.
\item[(iii)] $I \not\subset st_{a_\infty}(\mathscr L_1)$ and $I \not\supset st^a(\mathscr L_1)$.
\end{theorem}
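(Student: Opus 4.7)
The plan is to reduce each case to the hypothesis of Theorem \ref{T: BASICS: seJ notin F+[I,B(H)] implies uncountable dimension-(J+[I,B(H)])/(F+[I,B(H)])} applied with $J=I$, namely, $seI \not\subset F+[I,B(H)]$. Cases (i) and (ii) follow at once from the equivalences established earlier: in case (i), $I \subset st_{a_\infty}(\mathscr L_1) \subset \mathscr L_1$ gives $\omega \notin \Sigma(I)$, so the equivalence (i) $\Leftrightarrow$ (iii$'$) of Proposition \ref{P: TFAE:seI am-stable, scI am-stable, seI in, scI contains} converts non am-$\infty$ stability of $seI$ into $seI \not\subset F+[I,B(H)]$; in case (ii), $I \supset st^a(\mathscr L_1) \supset (\omega)$ yields $\omega \in \Sigma(I)$ and hence $F \subset [I,B(H)]$, so (i) $\Leftrightarrow$ (iii$'$) of Proposition \ref{P:TFAE-seI am-stable, scI am-stable, seI in Ipre-a, Ia in scI} turns non am-stability of $seI$ into $seI \not\subset [I,B(H)] = F+[I,B(H)]$.

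For case (iii) I split into two subcases by whether $\omega \in \Sigma(I)$. If $\omega \notin \Sigma(I)$, I argue by contradiction: if $seI$ were am-$\infty$ stable, then by Proposition \ref{P: TFAE:seI am-stable, scI am-stable, seI in, scI contains} so would be $scI$, and $scI$ would then be contained in the largest am-$\infty$ stable ideal $st_{a_\infty}(\mathscr L_1)$, forcing $I \subset scI \subset st_{a_\infty}(\mathscr L_1)$ against the hypothesis. Thus $seI$ is not am-$\infty$ stable, and Proposition \ref{P: TFAE:seI am-stable, scI am-stable, seI in, scI contains} again delivers $seI \not\subset F+[I,B(H)]$.

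The delicate subcase is $\omega \in \Sigma(I)$, where I must show $seI$ is not am-stable; this is the main obstacle, since the analogous direct argument only produces $scI \supset st^a(\mathscr L_1)$, which need not force $I \supset st^a(\mathscr L_1)$. Instead I bootstrap: supposing $seI$ is am-stable, $(\omega) \subset I$ yields $se((\omega)) \subset seI$, and since $(\omega~log^m) \not\subset \mathscr L_1$ for every $m \geq 0$, Lemma \ref{L:se,sc,pre-am,am}(ii$'$) gives $(se((\omega~log^m)))_a = se((\omega~log^{m+1}))$, so am-stability of $seI$ inductively implies $se((\omega~log^m)) \subset seI$ for every $m$. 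Because $\omega~log^m = \alpha \cdot \omega~log^{m+1}$ for the sequence $\alpha_n := 1/log\, n \in \text{c}_{\text{o}}^*$, each principal ideal $(\omega~log^m)$ lies inside $se((\omega~log^{m+1}))$, whence $\bigcup_m se((\omega~log^m)) = \bigcup_m (\omega~log^m) = st^a(\mathscr L_1)$ by Proposition \ref{P: stabilizers with logs}(i). This gives $st^a(\mathscr L_1) \subset seI \subset I$, contradicting $I \not\supset st^a(\mathscr L_1)$; hence $seI$ is not am-stable, and since $\omega \in \Sigma(I)$ makes $F \subset [I,B(H)]$, Proposition \ref{P:TFAE-seI am-stable, scI am-stable, seI in Ipre-a, Ia in scI} delivers $seI \not\subset F+[I,B(H)]$. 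In every case, Theorem \ref{T: BASICS: seJ notin F+[I,B(H)] implies uncountable dimension-(J+[I,B(H)])/(F+[I,B(H)])} applied with $J=I$ then yields uncountable dimension of $I/[I,B(H)]$.
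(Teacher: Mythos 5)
Your proof is correct, and for cases (i), (ii), and the $\omega\notin\Sigma(I)$ subcase of (iii) it follows the paper's argument essentially verbatim. The only genuine divergence is the $\omega\in\Sigma(I)$ subcase of (iii), where you took a long detour to avoid a problem that does not exist. You write that ``the analogous direct argument only produces $scI \supset st^a(\mathscr L_1)$, which need not force $I \supset st^a(\mathscr L_1)$.'' That is a misreading of the asymmetry between the two subcases: in the $\omega\notin\Sigma(I)$ case one wants to bound $I$ from above, so the sandwich $I\subset scI\subset st_{a_\infty}(\mathscr L_1)$ is the right tool; in the $\omega\in\Sigma(I)$ case one wants to bound $I$ from below, so one should pass to $seI\subset I$, not $scI$. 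Concretely: $\omega\in\Sigma(I)$ forces $se(\omega)\subset seI$, so $seI\ne\{0\}$; if $seI$ were am-stable it would, being a nonzero am-stable ideal, contain the smallest one, giving $st^a(\mathscr L_1)\subset seI\subset I$ and contradicting the hypothesis directly. This is exactly what the paper does, with no $sc$ and no induction. Your bootstrap through the tower $se((\omega\log^m))$ and Lemma \ref{L:se,sc,pre-am,am}(ii$'$) is a valid (and not entirely unilluminating) proof that a nonzero am-stable $seI$ must contain $st^a(\mathscr L_1)$, but it is re-deriving, by hand, precisely the statement ``$st^a(\mathscr L_1)$ is the smallest nonzero am-stable ideal'' that the paper has already established after Definition \ref{D: stabilizers}.
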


\begin{proof}
\item[(i)] 
Since $\omega \notin \Sigma(I)$ because 
$st_{a_\infty}(\mathscr L_1) \subset \mathscr L_1$ by Proposition \ref{P: TFAE:seI am-stable, scI am-stable, seI in, scI contains}, 
$seI \not\subset F+[I,B(H)]$ and the conclusion follows from 
Theorem \ref{T: BASICS: seJ notin F+[I,B(H)] implies uncountable dimension-(J+[I,B(H)])/(F+[I,B(H)])}.
\item[(ii)] $F\subset [I, B(H)]$ since $\omega \in \Sigma(I)$, and thus $seI \not\subset F+[I,B(H)]$, 
by Proposition \ref{P:TFAE-seI am-stable, scI am-stable, seI in Ipre-a, Ia in scI}, 
and the conclusion follows again from 
Theorem \ref{T: BASICS: seJ notin F+[I,B(H)] implies uncountable dimension-(J+[I,B(H)])/(F+[I,B(H)])}.
\item[(iii)] Assume first that $\omega \notin \Sigma(I)$. 
As $I \not\subset st_{a_\infty}(\mathscr L_1)$,  one has
$scI \not\subset st_{a_\infty}(\mathscr L_1)$. 
Since $st_{a_\infty}(\mathscr L_1)$ is the largest am-$\infty$ ideal, $scI$ is not am-$\infty$ stable, 
hence by Proposition \ref{P: TFAE:seI am-stable, scI am-stable, seI in, scI contains}, $seI \not\subset ~_{a_\infty}I$, 
and so by Corollary \ref{C:(F+[])+}(i), $seI \not\subset F+[I,B(H)]$.

Assume now that $\omega \in \Sigma(I)$.
Since $I \not\supset st^a(\mathscr L_1)$, one has $seI \not\supset st^a(\mathscr L_1)$. 
As $st^a(\mathscr L_1)$ is the smallest am-stable ideal, $seI$ is not am-stable, 
hence by Proposition \ref{P:TFAE-seI am-stable, scI am-stable, seI in Ipre-a, Ia in scI}, 
$seI \not\subset [I,B(H)] = F+[I,B(H)]$.

In either case the result follows now from 
Theorem \ref{T: BASICS: seJ notin F+[I,B(H)] implies uncountable dimension-(J+[I,B(H)])/(F+[I,B(H)])}.
\end{proof}

Theorem \ref{T: stabilizer containments and uncountable dimension}(ii),(iii) were motivated by an analysis of ideals of the form \\
$I = se(\xi_a)+ (\xi)$ when $\xi$ is irregular and nonsummable. 

We can extend the method of Theorem \ref{T: stabilizer containments and uncountable dimension} in two directions.

\begin{corollary}\label{C:am/am-infty uncountable dimension} 
Let $I \ne \{0\}$ be an ideal. 
Then $\frac{I}{[I,B(H)]}$ has uncountable dimension if any of the following conditions hold.
\item[(i)] $I \not\supset st^a(\mathscr L_1)$ and 
$I \subset J$ but $I\not\subset st_{a_\infty}(J)$ for some soft-complemented ideal $J$.
\item[(ii)] $I\not\subset st_{a_\infty}(\mathscr L_1)$ and 
$J \subset I$ but $st^a(J)\not\subset I$ for some soft-edged ideal $J$.
\end{corollary}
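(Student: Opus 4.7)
The plan is to translate the hypothesis on the auxiliary ideal $J$ into a nonstability statement about $seI$ or $scI$, and then invoke the appropriate case of Theorem \ref{T: stabilizer containments and uncountable dimension}. For case (i), the minimality of $scI$ among soft-complemented ideals containing $I$ gives $scI \subset J$. Were $scI$ am-$\infty$ stable, the maximality of $st_{a_\infty}(J)$ as the largest am-$\infty$ stable subideal of $J$ would force $I \subset scI \subset st_{a_\infty}(J)$, contradicting the hypothesis $I \not\subset st_{a_\infty}(J)$. Hence $scI$, and by the equivalence (i) $\Leftrightarrow$ (ii) of Proposition \ref{P: TFAE:seI am-stable, scI am-stable, seI in, scI contains} also $seI$, fails to be am-$\infty$ stable. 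Since $I \not\supset st^a(\mathscr L_1)$ rules out $I$ being large, $I$ must be either small, in which case Theorem \ref{T: stabilizer containments and uncountable dimension}(i) supplies the conclusion, or intermediate, in which case Theorem \ref{T: stabilizer containments and uncountable dimension}(iii) does.

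The dual argument handles case (ii). Since $J \subset I$ is soft-edged and $seI$ is the largest soft-edged ideal in $I$, one has $J \subset seI$. The operation $st^a$ is inclusion preserving (immediate from $st^a(K) = \bigcup_m K_{a^m}$ together with the inclusion-preserving property of the arithmetic mean), so $st^a(J) \subset st^a(seI)$. The hypothesis $st^a(J) \not\subset I$ therefore forces $st^a(seI) \not\subset I \supset seI$, so that $seI$ is not am-stable. Now $I \not\subset st_{a_\infty}(\mathscr L_1)$ rules out $I$ being small, so $I$ is either large, in which case Theorem \ref{T: stabilizer containments and uncountable dimension}(ii) applies, or intermediate, in which case Theorem \ref{T: stabilizer containments and uncountable dimension}(iii) applies.

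The only step requiring care is the extremality translation: in (i), recognizing that any am-$\infty$ stable ideal wedged between $I$ and $J$ must sit inside $st_{a_\infty}(J)$; in (ii), that the am-stable saturation of a soft-edged subideal of $I$ cannot escape $seI$ when $seI$ is am-stable. Once these observations are in place the argument reduces entirely to the case analysis provided by Theorem \ref{T: stabilizer containments and uncountable dimension}, and no further new work is needed.
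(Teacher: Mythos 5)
Your proof is correct, and it takes a genuinely different route from the paper's. The paper establishes nonstability of $scI$ (resp.\ $seI$) by descending (resp.\ ascending) the stabilizer tower to locate the first index $n$ at which $I$ escapes $_{a^n_\infty}J$ (resp.\ $J_{a^n}$ escapes $I$), invoking Lemma~\ref{L: sc(ainftyI),(seI)ainfty}(i) to preserve soft-complementedness at each level in (i), and Lemma~\ref{L:se,sc,pre-am,am}(ii$'$) to preserve soft-edgedness in (ii) (which forces an auxiliary reduction showing $J_{a^n}\not\subset\mathscr L_1$). You sidestep the tower entirely by exploiting the universal properties directly: $scI$ is the least soft-complemented ideal over $I$, $st_{a_\infty}(J)$ is the greatest am-$\infty$ stable ideal under $J$, $seI$ is the greatest soft-edged ideal under $I$, and $st^a$ is monotone with $st^a(K)=K$ for am-stable $K$; each nonstability conclusion then falls out of a two-line reductio. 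What you buy is a shorter argument that avoids the commutation lemmas altogether; what you give up is the explicit localization of the failure to a single application of $_{a_\infty}$ or $_a$, which the paper's version makes visible. Both arguments ultimately discharge the same case analysis through Theorem~\ref{T: stabilizer containments and uncountable dimension}.
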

\begin{proof} 
\item[(i)] 
By Theorem \ref{T: stabilizer containments and uncountable dimension}(iii) it remains to consider the case that $I\subset st_{a_\infty}(\mathscr L_1)$. 
Since $I\not\subset st_{a_\infty}(J)$, then for some $n\geq0$, $I \subset ~_{a_\infty^n}J$ but $I \not\subset ~_{a_\infty^{n+1}}J$. 
And since  $_{a_\infty^n}J$ is soft-complemented as well by Lemma \ref{L: sc(ainftyI),(seI)ainfty}(i), assume without loss of generality that $n=0$, 
i.e., $I \not\subset ~_{a_\infty}J$. 
But then $scI \not\subset ~_{a_\infty}J$ while $scI \subset J$, hence $_{a_\infty}(scI) \subset ~_{a_\infty}J$, so $scI$ is not am-$\infty$ stable. 
Hence the conclusion follows from Theorem \ref{T: stabilizer containments and uncountable dimension}(i) and Proposition \ref{P: TFAE:seI am-stable, scI am-stable, seI in, scI contains}.
\item[(ii)] By Theorem \ref{T: stabilizer containments and uncountable dimension}(iii) it remains to consider the case that $I \supset st^a(\mathscr L_1)$ and that for some $n\geq0$, $J_{a^n} \subset I$ but $J_{a^{n+1}} \not\subset I$. 
In particular, this implies that $J_{a^n} \not\subset \mathscr L_1$ since otherwise $st^a(J) \subset st^a(\mathscr L_1) \subset I$ against the hypothesis.
Hence, since $J$ is soft-edged, $J_{a^n}$ too is soft-edged by Lemma \ref{L:se,sc,pre-am,am}(ii$'$). 
Thus $J_{a^n} \subset seI$, and hence $J_{a^{n+1}} \subset (seI)_a$ while $J_{a^{n+1}} \not\subset seI$. 
This shows that seI is not am-stable and hence the conclusion follows from Theorem \ref{T: stabilizer containments and uncountable dimension}(ii).
\end{proof}

Using a method similar to the one employed in Theorem \ref{T: stabilizer containments and uncountable dimension} and building on Propositions \ref{P:omega notin I, codim, TFAE} and \ref{P:omega in I, codim, TFAE} we obtain:

\begin{proposition}\label{P:I,J uncountable dimension} 
If $I$ and $J$ are nonzero ideals and if $I$ is soft complemented or $J$ is soft-edged, then
\[
dim~\frac{J+[I,B(H)]}{[I,B(H)]} \quad \text{is}\quad 
\begin{cases}
1 ~\text{or uncountable}	&\text{if $\omega \notin \Sigma(I)$}\\
0 ~\text{or uncountable}	&\text{if $\omega \in \Sigma(I)$}
\end{cases}.
\]
\end{proposition}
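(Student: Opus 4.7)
The plan is to reduce the proposition to Theorem \ref{T: BASICS: seJ notin F+[I,B(H)] implies uncountable dimension-(J+[I,B(H)])/(F+[I,B(H)])} by establishing, under either hypothesis, the implication
\[
J\not\subset F+[I,B(H)] \;\Longrightarrow\; seJ\not\subset F+[I,B(H)].
\]

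First I would set up the tower identity. Since $J\ne\{0\}$ is a two-sided ideal of $B(H)$ it contains $F$, so $[I,B(H)]\subset F+[I,B(H)]\subset J+[I,B(H)]$ and hence
\[
\dim\frac{J+[I,B(H)]}{[I,B(H)]}=\dim\frac{J+[I,B(H)]}{F+[I,B(H)]}+\dim\frac{F+[I,B(H)]}{[I,B(H)]}.
\]
By Theorem \ref{T:DFWW} the last summand is $0$ when $\omega\in\Sigma(I)$ (for then $F\subset[I,B(H)]$), and by Lemma \ref{L:nonsingular extension criteria}(ii) it is $1$ when $\omega\notin\Sigma(I)$. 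Consequently the proposition amounts to showing that the middle term is either $0$ or uncountable, which by Theorem \ref{T: BASICS: seJ notin F+[I,B(H)] implies uncountable dimension-(J+[I,B(H)])/(F+[I,B(H)])} follows from the displayed implication above.

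If $J=seJ$ the implication is a tautology, which handles the soft-edged case. For the soft-complemented case I would introduce the auxiliary ideal $L:={}_{a_\infty}I$ when $\omega\notin\Sigma(I)$ and $L:={}_aI$ when $\omega\in\Sigma(I)$. Corollary \ref{C:(F+[])+} gives $L^+=(F+[I,B(H)])^+$ in both situations, and since $L=\mathrm{span}\,L^+$ one concludes $L\subset F+[I,B(H)]$ together with the equivalence $M\subset F+[I,B(H)]\iff M^+\subset L^+\iff M\subset L$ valid for any ideal $M$. The required implication thus becomes $J\not\subset L\Rightarrow seJ\not\subset L$. Because $I=scI$, Lemma \ref{L: sc(ainftyI),(seI)ainfty}(i) (respectively Lemma \ref{L:se,sc,pre-am,am}(i), combined with the always-valid $L\subset scL$) shows that $L$ itself is soft-complemented. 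If $seJ\subset L$, then for every $\eta\in\Sigma(J)$ and every $\alpha\in\mathrm{c}_\mathrm{o}^*$ the sequence $\alpha\eta$ belongs to $\Sigma(seJ)\subset\Sigma(L)$; soft-complementedness of $L$ then forces $\eta\in\Sigma(L)$, i.e.\ $J\subset L$, which is the contrapositive of what we need.

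The only non-routine step is the soft-complemented case, where the main point is that the auxiliary ideal $L$ inherits soft-complementedness from $I$ through the commutation of $sc$ with the (pre-)arithmetic-mean(-at-infinity) operations; once this is secured, Theorem \ref{T: BASICS: seJ notin F+[I,B(H)] implies uncountable dimension-(J+[I,B(H)])/(F+[I,B(H)])} carries out the rest of the argument.
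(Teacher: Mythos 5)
Your proposal is correct and follows essentially the same route as the paper's proof: reduce to Theorem \ref{T: BASICS: seJ notin F+[I,B(H)] implies uncountable dimension-(J+[I,B(H)])/(F+[I,B(H)])}, translate the membership in $F+[I,B(H)]$ into membership in $_{a_\infty}I$ or $_aI$ via Corollary \ref{C:(F+[])+} (the paper instead quotes Propositions \ref{P:omega notin I, codim, TFAE}/\ref{P:omega in I, codim, TFAE}, which encode the same content), and use the commutation of $sc$ with the pre-arithmetic-mean operations (Lemmas \ref{L: sc(ainftyI),(seI)ainfty}(i), \ref{L:se,sc,pre-am,am}(i)). Your step of first proving $L$ is soft-complemented and then invoking the characteristic-set definition of $scL$ is logically the same move as the paper's one-line chain $J\subset scJ=sc(seJ)\subset sc(L)=L$, just unpacked.
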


\begin{proof} Assume that $\omega \notin \Sigma(I)$. 
If $dim~\frac{J+[I,B(H)]}{[I,B(H)]} \ne 1$, then by 
Proposition \ref{P:omega notin I, codim, TFAE}, $J \not\subset~ _{a_\infty}I$. 
If $J$ is soft-edged, then $seJ \not\subset~ _{a_\infty}I$. 
If $I$ is soft-complemented, then  $se J \subset ~_{a_\infty}I$ would imply that 
$J \subset scJ = sc(se(J)) \subset  sc(_{a_\infty}I) = ~_{a_\infty}(scI) = ~_{a_\infty}I$ (see Lemma \ref{L: sc(ainftyI),(seI)ainfty}(i)).
Thus, in either case $se J \not\subset ~_{a_\infty}I$, which is equivalent to 
$se J \not\subset F + [I, B(H)]$ by Proposition \ref{P:omega notin I, codim, TFAE}.
Uncountable dimension then follows from Theorem \ref{T: BASICS: seJ notin F+[I,B(H)] implies uncountable dimension-(J+[I,B(H)])/(F+[I,B(H)])}. 

The case when $\omega \in \Sigma(I)$ (i.e., $F \subset [I,B(H)]$) follows similarly from 
Proposition \ref{P:omega in I, codim, TFAE} and Lemma \ref{L:se,sc,pre-am,am}(i$'$).
\end{proof}

A case of special interest is when $I = J$ which, for soft-edged and soft-complemented ideals, proves the codimension conjecture stated in the introduction.

\begin{corollary}\label{C: soft-edged or soft-complemented}  
If $I$ is a soft-edged or soft-complemented ideal, then 
\[
dim \,\frac{I}{[I,B(H)]} \quad \text{is} \quad 
\begin{cases}
1 \quad \text{or uncountable} \quad	&\text{if $\omega \notin \Sigma(I)$} \\
0 \quad \text{or uncountable} \quad	&\text{if $\omega \in \Sigma(I)$.}
\end{cases}
\]
\end{corollary}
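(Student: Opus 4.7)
The plan is to deduce Corollary \ref{C: soft-edged or soft-complemented} immediately from Proposition \ref{P:I,J uncountable dimension} by specializing to the case $J=I$. The hypothesis of the proposition is ``$I$ is soft-complemented or $J$ is soft-edged,'' and setting $J=I$ converts this into precisely the hypothesis of the corollary: $I$ is soft-edged or soft-complemented. Moreover, since $[I,B(H)] \subset I$, one has $J + [I,B(H)] = I + [I,B(H)] = I$, so
\[
\dim \frac{J+[I,B(H)]}{[I,B(H)]} \;=\; \dim\frac{I}{[I,B(H)]},
\]
and the dichotomy of Proposition \ref{P:I,J uncountable dimension} transfers directly to the conclusion of the corollary.

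Thus the only step is to write the word ``In particular, take $J = I$ in Proposition \ref{P:I,J uncountable dimension}.'' There is no genuine obstacle here, because the nontrivial content has already been extracted in Proposition \ref{P:I,J uncountable dimension}; that proposition, in turn, relies on Theorem \ref{T: BASICS: seJ notin F+[I,B(H)] implies uncountable dimension-(J+[I,B(H)])/(F+[I,B(H)])} together with the commutation relations for $se,sc$ with the pre-arithmetic mean and pre-arithmetic mean at infinity (Lemmas \ref{L:se,sc,pre-am,am}(i) and \ref{L: sc(ainftyI),(seI)ainfty}(i)). The only point worth remembering, should one wish to make the argument stand alone, is that if $I$ is soft-edged then $seI = I \not\subset F+[I,B(H)]$ whenever $I \ne F+[I,B(H)]$, while if $I$ is soft-complemented then $seI \subset F+[I,B(H)]$ would force $I = scI \subset sc(F+[I,B(H)])$, and one checks via the soft-complemented-ness of $\,_aI$ and $\,_{a_\infty}I$ (Lemmas \ref{L:se,sc,pre-am,am}(i) and \ref{L: sc(ainftyI),(seI)ainfty}(i)) that this collapses the codimension to $0$ or $1$ in the respective cases. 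But in view of Proposition \ref{P:I,J uncountable dimension} this verification is already packaged, and the corollary follows in one line.
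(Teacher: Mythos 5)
Your proof is correct and is exactly the paper's argument: the corollary is obtained by specializing Proposition \ref{P:I,J uncountable dimension} to $J=I$, noting that $I+[I,B(H)]=I$ and that the hypothesis collapses to ``$I$ soft-edged or soft-complemented.'' The paper itself introduces the corollary with the line ``A case of special interest is when $I=J$,'' giving no separate proof, so your one-line deduction is precisely what is intended.
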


\noindent In particular, $dim \,\frac{(\omega)}{[(\omega),B(H)]}$ is uncountable since $\omega$ is not regular. 

Another case of interest is when $I$ or $J$ are the trace class $\mathscr L_1$, 
which is both soft-edged and soft-complemented. 

\begin{corollary}\label{C:dimensions of 3 quotients} Let $I$ be a nonzero ideal. Then
\item[(i)] 
\[
dim \,\frac{I+[\mathscr L_1,B(H)]}{[\mathscr L_1,B(H)]} \quad \text{is} \quad 
\begin{cases}
1 	&\text{if $I \subset ~_{a_\infty}(\mathscr L_1)$} \\
\text{uncountable}	&\text{if $I \not\subset ~_{a_\infty}(\mathscr L_1)$}
\end{cases}
\]
\item[(ii)] 
\[
dim \,\frac{\mathscr L_1+[I,B(H)]}{[I,B(H)]} \quad \text{is} \quad 
\begin{cases}
0 	&\text{if $\omega \in \Sigma(I)$} \\
1 	&\text{if $\omega \in \Sigma(scI) \setminus \Sigma(I)$} \\
\text{uncountable}	&\text{if $\omega \notin \Sigma(scI)$}
\end{cases}
\]
\item[(iii)] If $\omega \notin \Sigma(I)$ then 
\[
dim \,\frac{I}{I \cap \mathscr L_1+[I,B(H)]} \quad \text{is} \quad 
\begin{cases}
0 	&\text{if $I \subset \mathscr L_1$} \\
\text{uncountable} \quad	&\text{if $I \not\subset \mathscr L_1$}
\end{cases}.
\] 
In particular, if $\mathscr L_1 \not\subset I$, then there are uncountably many linearly independent extensions of $Tr$ from $\mathscr L_1$ to $I$.
\end{corollary}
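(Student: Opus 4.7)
My plan is to deduce (i) and (ii) directly from Proposition \ref{P:I,J uncountable dimension} combined with the dimension-one characterization of Proposition \ref{P:omega notin I, codim, TFAE}, and to establish (iii) by adapting the argument of Theorem \ref{T: BASICS: seJ notin F+[I,B(H)] implies uncountable dimension-(J+[I,B(H)])/(F+[I,B(H)])} via the generalization indicated in the remark following its proof. The final claim about extensions of $Tr$ then follows by identifying the relevant quotient with the one in (iii).

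For (i), apply Proposition \ref{P:I,J uncountable dimension} with $\mathscr L_1$ playing the role of the ``$I$'' there (using that $\mathscr L_1$ is soft-complemented) and the corollary's $I$ playing the role of ``$J$''. Since $\omega \notin \Sigma(\mathscr L_1)$, this yields dimension $1$ or uncountable; by Proposition \ref{P:omega notin I, codim, TFAE}(iv) the dimension-one case is exactly $I \subset {}_{a_\infty}\mathscr L_1$. For (ii), apply Proposition \ref{P:I,J uncountable dimension} with $\mathscr L_1$ as the soft-edged ``$J$''. If $\omega \in \Sigma(I)$ then $\mathscr L_1 \subset [I,B(H)]$ by Theorem \ref{T:DFWW}, so the quotient is zero. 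If $\omega \notin \Sigma(I)$, Proposition \ref{P:I,J uncountable dimension} again gives dimension $1$ or uncountable; by Proposition \ref{P:omega notin I, codim, TFAE}(iv) combined with Corollary \ref{C: ainftyI = L1 if and only if se(omega) subset I}(i), the dimension-one case corresponds to $\omega \in \Sigma(scI)$. This gives the three-case dichotomy.

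For (iii), the case $I \subset \mathscr L_1$ is trivial. Assume $I \not\subset \mathscr L_1$; I will apply the generalization of Theorem \ref{T: BASICS: seJ notin F+[I,B(H)] implies uncountable dimension-(J+[I,B(H)])/(F+[I,B(H)])} with $L := I \cap \mathscr L_1 \subset J := I$, which requires checking two things. First, hereditariness of $(I \cap \mathscr L_1 + [I,B(H)])^+$: since $\omega \notin \Sigma(I)$ forces $\omega \notin \Sigma(seI)$, Proposition \ref{P:(L1+[])+=L1+} gives $(\mathscr L_1 + [I,B(H)])^+ = \mathscr L_1^+$, and a short argument (any positive element of $I \cap \mathscr L_1 + [I,B(H)]$ lies in both $I$ and $\mathscr L_1$, hence in $I \cap \mathscr L_1$) shows $(I \cap \mathscr L_1 + [I,B(H)])^+ = (I \cap \mathscr L_1)^+$, which is hereditary. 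Second, $seI \not\subset I \cap \mathscr L_1 + [I,B(H)]$: since $\mathscr L_1$ is soft-complemented, $seI \subset \mathscr L_1$ would force $I \subset scI = sc(seI) \subset sc\,\mathscr L_1 = \mathscr L_1$, contradicting the assumption; so pick any $X \in (seI)^+ \setminus \mathscr L_1$, and by Proposition \ref{P:(L1+[])+=L1+}, $X \notin (\mathscr L_1 + [I,B(H)])^+ \supset (I \cap \mathscr L_1 + [I,B(H)])^+$. Together these hypotheses yield uncountable dimension of $\frac{I}{I \cap \mathscr L_1 + [I,B(H)]}$ by the generalized theorem.

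For the final assertion: since $\omega \notin \Sigma(I)$, Lemma \ref{L:nonsingular extension criteria}(i) and Proposition \ref{P:trace extension} imply that $Tr$ extends uniquely from $\mathscr L_1$ to $\mathscr L_1 + [I,B(H)]$, and further extensions to $\mathscr L_1 + I$ are parametrized by linear functionals on $\frac{\mathscr L_1 + I}{\mathscr L_1 + [I,B(H)]}$. The elementary modular identity $I \cap (\mathscr L_1 + [I,B(H)]) = I \cap \mathscr L_1 + [I,B(H)]$ (immediate since $[I,B(H)] \subset I$) yields $\frac{\mathscr L_1 + I}{\mathscr L_1 + [I,B(H)]} \cong \frac{I}{I \cap \mathscr L_1 + [I,B(H)]}$, which has uncountable dimension by (iii). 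The main obstacle is the verification in (iii) of the two conditions needed to invoke the generalized Theorem \ref{T: BASICS: seJ notin F+[I,B(H)] implies uncountable dimension-(J+[I,B(H)])/(F+[I,B(H)])} — both the hereditariness of $(I \cap \mathscr L_1 + [I,B(H)])^+$ and the existence of a witness $X \in (seI)^+ \setminus \mathscr L_1$ rely crucially on Proposition \ref{P:(L1+[])+=L1+}, which in turn depends on the soft-cover-free condition $\omega \notin \Sigma(seI)$.
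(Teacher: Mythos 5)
Your proof is correct and follows essentially the same route as the paper: parts (i) and (ii) come from Proposition \ref{P:I,J uncountable dimension} together with Proposition \ref{P:omega notin I, codim, TFAE} and Corollary \ref{C: ainftyI = L1 if and only if se(omega) subset I}(i), and part (iii) invokes the generalization of Theorem \ref{T: BASICS: seJ notin F+[I,B(H)] implies uncountable dimension-(J+[I,B(H)])/(F+[I,B(H)])} after deriving $(I\cap\mathscr L_1+[I,B(H)])^+=(I\cap\mathscr L_1)^+$ from Proposition \ref{P:(L1+[])+=L1+}. Your treatment is somewhat more explicit than the paper's on two points that the paper leaves tacit — the hereditariness hypothesis needed to invoke the generalized theorem, and the second-isomorphism-theorem identification $\frac{\mathscr L_1 + I}{\mathscr L_1 + [I,B(H)]} \cong \frac{I}{I \cap \mathscr L_1 + [I,B(H)]}$ justifying the final assertion — but these are details, not a different method.
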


\newpage

\begin{proof} 
\item[(i)] Immediate from Propositions \ref{P:I,J uncountable dimension} and \ref{P:omega notin I, codim, TFAE} (the equivalence of (i) and (iv)) since $\omega \notin \Sigma(\mathscr L_1)$. 
\item[(ii)] Recall that $\mathscr L_1 \subset [I,B(H)]$ if and only if $\omega \in \Sigma(I)$. 
If $\omega \notin \Sigma(I)$, by Proposition \ref{P:omega notin I, codim, TFAE}, 
$dim \,\frac{\mathscr L_1+[I,B(H)]}{[I,B(H)]} = 1$ when $\mathscr L_1 \subset ~_{a_\infty}I$, 
which by Corollary \ref{C: ainftyI = L1 if and only if se(omega) subset I}(i) is equivalent to 
$\omega \in \Sigma(scI)$, and $dim \,\frac{\mathscr L_1+[I,B(H)]}{[I,B(H)]}$ is uncountable otherwise.
\item[(iii)] First notice that if $seI \subset I \cap \mathscr L_1 + [I, B(H)]$, 
then 
\[(seI)^+ \subset (I \cap \mathscr L_1 + [I, B(H)])^+ = (I \cap \mathscr L_1)^+ \subset \mathscr L_1^+\] 
and hence $seI  \subset \mathscr L_1$, where the equality follows from Proposition \ref{P:(L1+[])+=L1+}. 
Since $\mathscr L_1$ is soft-complemented, $I \subset scI = sc(seI) \subset \mathscr L_1$.
Thus, if $I \not\subset \mathscr L_1$, $seI \not\subset I \cap \mathscr L_1+[I,B(H)]$. 
By the second remark following Theorem \ref{T: BASICS: seJ notin F+[I,B(H)] implies uncountable dimension-(J+[I,B(H)])/(F+[I,B(H)])}, 
$dim \,\frac{I}{I \cap \mathscr L_1+[I,B(H)]}$ is uncountable. 
The particular case is then clear.
\end{proof}

\begin{remark}\label{R: Dixmier proved}
Dixmier proved in \cite{jD66} that the am-closure $(\eta)^- = ~_a(\eta_a)$ of a principal ideal $(\eta)$ 
for which $(\eta) \subset se(\eta)_a$ (i.e., $\eta = o(\eta_a)$, which is equivalent to $\frac{(\eta_a)_{2n}}{(\eta_a)_n} \rightarrow \frac{1}{2}$) supports a positive singular trace. 
In \cite[Section 5.27 Remark 1]{DFWW} it was noted that Dixmier's construction can be used to show that $dim~\frac{(\eta)^-}{cl[(\eta)^-,B(H)]} = \infty$ 
where $cl$ denotes the closure in the principal ideal norm. 
Corollary \ref{C: soft-edged or soft-complemented} shows that $dim~\frac{(\eta)^-}{[(\eta)^-,B(H)]}$ is uncountable 
follows from the weaker hypothesis $\eta_a \ne O(\eta)$, 
i.e., $\eta$ is not regular.
Indeed, by \cite[Theorem 5.20]{DFWW} (see also \cite{vKgW02}), $(\eta)$ is not am-stable 
precisely when $(\eta)^-$ is not am-stable, 
i.e., $dim \,\frac{(\eta)^-}{[(\eta)^-,B(H)]} > 0$. 
In \cite{vKgW07-soft} we show that $(\eta)^-$ 
is always soft-complemented and so by Corollary \ref{C: soft-edged or soft-complemented}, $dim\, \frac{(\eta)^-}{[(\eta)^-,B(H)]}$ is uncountable. 
\end{remark}

The condition in Theorem \ref{T: stabilizer containments and uncountable dimension}(ii) for $[I,B(H)]$ to have infinite 
codimension in $I$, namely that $seI$ be not am-stable, is only sufficient. 
The next 
theorem presents a class of ideals $I$ with $seI$ 
am-stable but 
$dim \frac{I}{[I,B(H)]} = \infty$. 
The technique used does not depend on 
Theorem \ref{T: BASICS: seJ notin F+[I,B(H)] implies uncountable dimension-(J+[I,B(H)])/(F+[I,B(H)])} 
nor 
on the method of its proof but is more combinatoric in nature. 
As indicated 
in Corollary \ref{C: false converse for T: se or sc inf codim implies+}, 
this technique can be used to prove infinite codimension for a wider class of ideals.

\begin{theorem}\label{T: false converse for T: se or sc inf codim implies}
For every am-stable principal ideal $J \ne \{0\}$ there is an ideal $I$ with $se J \subset I \subset J$ for which $dim~\frac{I}{[I,B(H)]} = \infty$, yet $seI$ and hence $scI$ are am-stable. 
\end{theorem}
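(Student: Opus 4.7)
Write $J=(\xi)$ with $\xi\in\text{c}_{\text{o}}^*$ regular (so $\xi\asymp\xi_a$ and $\xi$ satisfies the $\Delta_{1/2}$-condition). My plan is to build $I$ as $seJ$ plus an ideal generated by a countable family of carefully designed step sequences $\eta^{(j)}$. A preliminary observation buys the first conclusion for free: any ideal $I$ with $seJ\subset I\subset J$ automatically satisfies $seI=seJ$ (by monotonicity and idempotence of $se$), and since $J$ is am-stable, Proposition \ref{P:TFAE-seI am-stable, scI am-stable, seI in Ipre-a, Ia in scI} gives that $seJ$ is am-stable, hence so are $seI$ and $scI$. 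Thus only the infinite codimension statement requires work.

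For the construction, first I would fix a rapidly increasing block sequence $0=n_0<n_1<n_2<\ldots$ with $n_{k+1}/n_k\to\infty$ fast enough that
\[
\frac{n_k\,\xi_{n_k}}{n_{k^+}}\ \gg\ \xi_{n_{k^+}}
\]
whenever $k^+-k$ is bounded; this is arrangeable because $\beta(\xi)>-1$ by regularity. Then I would fix a partition $\mathbb{N}=\bigsqcup_{j\ge 1}K_j$ into infinite disjoint subsets with all gaps at least $2$, and for each $j$ define the ``$K_j$-staircase''
\[
\eta^{(j)}_i:=\xi_{n_{k_+(i;j)}},\qquad k_+(i;j):=\min\{k\in K_j:n_k\ge i\}.
\]
It is immediate that $\eta^{(j)}\in\text{c}_{\text{o}}^*$, $\eta^{(j)}\le\xi$ (so $\eta^{(j)}\in\Sigma(J)$), and $\eta^{(j)}_{n_k}=\xi_{n_k}$ for $k\in K_j$; the last fact combined with the $\Delta_{1/2}$-property of $\xi$ forces $\eta^{(j)}\notin\Sigma(seJ)$. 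Setting $I:=seJ+\sum_{j\ge 1}(\eta^{(j)})$ yields $seJ\subset I\subset J$.

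The third step is to prove that $\{diag\,\eta^{(j)}+[I,B(H)]\}_{j\ge 1}$ are linearly independent. Assume $\sum_{j=1}^N c_j\,diag\,\eta^{(j)}\in[I,B(H)]$ with $c_{j_0}\ne 0$. By Theorem \ref{T:DFWW} the monotonization of $(\sum c_j\eta^{(j)})_a$ is majorized by some $\alpha\xi+\sum_{l=1}^M c'_l D_{m_l}\eta^{(j_l)}$ with $\alpha\in\text{c}_{\text{o}}^*$. I would test this inequality at $t_k:=\lceil(n_k+n_{k^+})/2\rceil$, where $k\in K_{j_0}$ is large and $k^+$ is the $K_{j_0}$-successor of $k$. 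Direct estimates give: (i) the block-$k$ bump of $\eta^{(j_0)}$ dominates the partial sum, so $(\eta^{(j_0)})_a(t_k)\asymp n_k\xi_{n_k}/n_{k^+}$; (ii) for $j\ne j_0$ the off-$j_0$ means contribute strictly lower order, by block growth and $K_j$-disjointness; (iii) $\alpha\xi$ contributes $o(\xi_{n_{k^+}})$; and (iv) for each bounded $m_l$ and $k$ large, the super-polynomial growth of $(n_k)$ forces $\lceil t_k/m_l\rceil$ to lie in a block indexed in $(k,k^+)$, hence outside $K_{j_0}$, so that $\eta^{(j_l)}_{\lceil t_k/m_l\rceil}=O(\xi_{n_{k^+}})$. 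The displayed growth condition then yields $|c_{j_0}|\cdot n_k\xi_{n_k}/n_{k^+}\le O(\xi_{n_{k^+}})$, contradicting $n_k\xi_{n_k}/n_{k^+}\gg\xi_{n_{k^+}}$. Hence $c_{j_0}=0$ and inductively all $c_j=0$.

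The main obstacle is precisely step (iv): the ampliations $D_m\eta^{(j_0)}$ of the generator $\eta^{(j_0)}$ naturally replicate its bump structure at shifted scales and could in principle absorb the arithmetic-mean spike $\asymp n_k\xi_{n_k}/n_{k^+}$ into $\Sigma(I)$. Avoiding this requires coordinating the growth of $(n_k)$ with the partition $\{K_j\}$: the $n_k$ must grow super-polynomially fast relative to any bounded ampliation factor, and the $K_j$'s must have large enough gaps that the shifted bumps $(n_{l-1}/m_l,n_l/m_l]$ for $l\in K_{j_0}$ do not cover the test windows $t_k$. The simultaneous compatibility of these choices for all finitely many $(c'_l,m_l,j_l)$ at once — while also preserving the dominance asymptotic in (i) — is the combinatorial heart of the argument.
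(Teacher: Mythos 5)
Your observation that $seI = seJ$ for any $seJ \subset I \subset J$, and hence that the am-stability of $seI$ and $scI$ follows from Proposition \ref{P:TFAE-seI am-stable, scI am-stable, seI in Ipre-a, Ia in scI}, is correct; only the codimension statement needs work.

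However, the construction collapses at the displayed growth condition, which is provably impossible for a regular $\xi$. Regularity gives $\sum_{i=1}^n\xi_i \leq Mn\xi_n$ for some $M>0$, hence
\[
n_k\xi_{n_k} \leq \sum_{i=1}^{n_k}\xi_i \leq \sum_{i=1}^{n_{k^+}}\xi_i \leq Mn_{k^+}\xi_{n_{k^+}},
\]
so $n_k\xi_{n_k}/n_{k^+} \leq M\xi_{n_{k^+}}$ for every choice of the block sequence, no matter how fast it grows. Your remark that $\beta(\xi)>-1$ makes the condition ``arrangeable'' has the implication reversed: $\beta(\xi)>-1$ is precisely what forces the opposite bound. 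The same inconsistency surfaces in your step (i): since $\eta^{(j_0)}\leq\xi$ and $\xi$ is regular with the $\Delta_{1/2}$-condition, one has $(\eta^{(j_0)}_a)_{t_k} \leq (\xi_a)_{t_k} \leq C\xi_{t_k} \leq C'\xi_{n_{k^+}}$, which would conflict with your lower bound $(\eta^{(j_0)}_a)_{t_k} \geq n_k\xi_{n_k}/n_{k^+}$ if the growth condition held.

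The structural obstacle is that no sequence $\eta \leq \xi$ can have an arithmetic mean that spikes above $\xi$ when $\xi$ is regular, because $\eta_a \leq \xi_a \leq M\xi$; so all of your staircase sequences have arithmetic means pinned to the profile of $\xi$, and no choice of test points $t_k$ can isolate the coefficient $c_{j_0}$. The paper sidesteps this by first manufacturing an \emph{irregular} auxiliary sequence: starting from a regular generator $\mu$ of $\Sigma(J)$, it builds a sequence $\xi' \leq \mu$ with $\xi'_{p_l} = \frac{1}{l}\mu_{p_l}$ but $(\xi'_a)_{p_l} \geq \frac{1}{2}(\mu_a)_{p_l} \asymp l\,\xi'_{p_l}$, so that $\xi'_a$ genuinely spikes above $\xi'$ by a factor growing like $l$. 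The ideal is then $I = seJ + (\xi')$ --- a single auxiliary principal ideal rather than countably many generators --- and the quantitative escape property $(\chi/\xi'_a)_{mp_l}\to 0$ for all $\chi \in \Sigma(I)$ and suitable $m$ is what drives the linear-independence argument. Without an irregular auxiliary sequence inside $I$, your approach has no room to operate.
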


\begin{proof}
Choose a generator $\mu$ for $\Sigma(J)$. 
Then $\mu$ is regular, i.e., $\mu \asymp \mu_a$, and hence nonsummable. 
Now construct a sequence $\xi \in \text{c}_{\text{o}}^*$ 
together with a strictly increasing sequence of indices $<p_l>_{l \in \mathbb N}$ 
for which:
(i) $\xi \leq \mu$, 
(ii) $\xi_{p_l} = \frac{1}{l} \mu_{p_l}$ and
(iii) $(\xi_a)_{p_l} \geq \frac{1}{2} (\mu_a)_{p_l}$. 
Set $p_1 = 1$, $\xi_1 = \mu_1$ 
and assume that $p_1<\cdots<p_l$ and $\xi_i$ for $1 \leq i \leq p_l$ have been chosen so that (i)-(iii) hold.
Define $\xi_i := \min~\{\xi_{p_l},\mu_i\}$ for $p_l < i \leq p_{l+1}-1$ 
where $p_{l+1}> p_l$, $p_{l+1} \geq 3$ is chosen large enough so that 
$\sum_{i=1}^{p_{l+1}-1} \xi_i \geq \frac{3}{4}\sum_{i=1}^{p_{l+1}-1} \mu_i$, which is possible due to the nonsummability of $\mu$.
Define $\xi_{p_{l+1}} := \frac{1}{l+1}~\mu_{p_{l+1}}$. Then 
\[
\sum_{i=1}^{p_{l+1}} \xi_i \geq \frac{3}{4}\sum_{i=1}^{p_{l+1}-1} \mu_i \geq \frac{1}{2}\sum_{i=1}^{p_{l+1}} \mu_i,
\]
the inequality following from $\sum_{i=1}^{p_{l+1}-1} \mu_i \geq 
2\mu_{p_{l+1}}$. 
Therefore (i)-(iii) hold. Notice that (ii),(iii) imply $\xi \not\asymp \xi_a$, i.e., $\xi$ is irregular.

Define now $I := se J+(\xi)$. 
Since $\xi \leq \mu$, one has $(\xi)\subset(\mu)=J$ and hence $seJ \subset I \subset J$.  
Since $J=scJ$ because principal ideals are soft-complemented \cite{vKgW07-soft}, $seJ$ and $J$ form a soft pair and hence $scI=J$ and $seI=seJ$. 
Since $J$ is am-stable, so are $seI$ and $scI$ (see Remark \ref{R: I am-stable implies seI,scI am-stable with false converse}).

Notice that $\mu \notin \ell^1$ implies $\omega = o(\mu_a)$ and hence $\omega \in \Sigma(se J) \subset \Sigma(I)$.
Condition (iii) implies that $\xi \notin \ell^1$.

The pair $\xi,I$ has the following property which as we will show does imply that $dim~\frac{I}{[I,B(H)]} = \infty$: there is a strictly increasing sequence of indices $<p_l>_{l \in \mathbb N}$ such that 
for every $\chi \in \Sigma(I)$, $(\frac{\chi}{\xi_a})_{mp_l} \rightarrow 0$ for some integer $m \in \mathbb N$.

Indeed, for every $\chi \in \Sigma(I)$ there are $\alpha \in \text{c}_{\text{o}}^*$, $M>0$ and 
$m \in \mathbb N$ for which $\chi \leq \alpha \mu + MD_m\xi$ and so
\begin{align*}
(\frac{\chi}{\xi_a})_{mp_l} &\leq \frac{(\alpha \mu)_{mp_l} + M(D_m \xi)_{mp_l}}{(\xi_a)_{mp_l}} \\
&\leq \frac{\alpha_{p_l} \mu_{p_l} + M \xi_{p_l}}{(\xi_a)_{mp_l}}  \qquad\qquad \text{by the monotonicity of $\alpha$ and $\mu$} \\
&	\hspace{1.85in}					 \text{and the definition of $D_m$} \\
&\leq m~ \frac{\alpha_{p_l} \mu_{p_l} + M \xi_{p_l}}{(\xi_a)_{p_l}}  \quad\qquad \text{since $(\xi_a)_{mp_l} \geq \frac{1}{m} (\xi_a)_{p_l}$} \\
&\leq 2m~ \frac{\alpha_{p_l} \mu_{p_l} + \frac{M}{l} \mu_{p_l}}{(\mu_a)_{p_l}}  \qquad \text{by (ii) and (iii)} \\
&\leq 2m (\alpha_{p_l} + \frac{M}{l}) \rightarrow 0 \qquad \text{since $\mu_a \geq \mu$.} 
\end{align*}

We proceed now to prove that the codimension of $[I,B(H)]$ is 
infinite. 
For each positive integer $N>1$
and $1 \leq j \leq N$, 
choose strictly increasing sequences of indices $m_k^{(j)} < n_k^{(j)}$ 
where for
all $k \in \mathbb N$:
\item[(a)] $n_k^{(j)} \in \{p_l \}$ and when $n_k^{(j)} = p_l$ then $l \geq k$.
\item[(b)] $\sum_{i=m_k^{(j)}}^{n_k^{(j)}} \xi_i \geq 3~\sum_{i=1}^{m_k^{(j)}} \xi_i$.
\item[(c)] $m_k^{(j-1)}= kn_k^{(j)}+1$ for $2\leq j \leq N$.
\item[(d)] $m_{k+1}^{(N)}=\min~\{i \mid \xi_{kn_k^{(1)}} \geq N \xi_i\}$. 

\noindent To construct the sequences $m_k^{(j)}$ and $n_k^{(j)}$, start with $m_1^{(N)}=1$ and choose $n_1^{(N)} \in \{p_l\}$ satisfying (b), 
which 
is possible since $\xi \notin \ell^1$. 
Then set $~m_1^{(N-1)}= n_1^{(N)}+1$ 
according to (c). 
Alternating between (b) and (c), 
obtain $m_1^{(j)},n_1^{(j)}$ for $1 \leq j \leq N$. 
Then choose $m_2^{(N)} > n_1^{(1)}$ so to satisfy (d), which is possible since $\xi \in \text{c}_{\text{o}}^*$.
Continue then the construction for $k=2$ and so on. So for each $k$, 
\[
m_k^{(N)} <kn_k^{(N)}<m_k^{(N-1)}<kn_k^{(N-1)}<\dots<m_k^{(1)}<kn_k^{(1)}<m_{k+1}^{(N)}.
\]
Then define $N$ sequences $\eta^{(j)} \in \text{c}_{\text{o}}^*$ by setting 
\[
(\eta^{(j)})_i := 
\begin{cases}
\min~(j,p) \xi_i,					&\text{if} ~m_k^{(p)} \leq i \leq kn_k^{(p)}~\text{for}~1 \leq p \leq N \\
\min~(\xi_{kn_k^{(1)}},j\xi_i),		&\text{if} ~kn_k^{(1)} < i < m_{k+1}^{(N)}. 
\end{cases}
\]
Thus $\xi = \eta^{(1)} \leq \eta^{(2)} \leq \cdots \leq \eta^{(N)}$ and 
$\eta^{(j)} \leq j\xi$ so that $\eta^{(j)} \in \Sigma(I)$ for every $1 \leq j \leq N$. 

To illustrate this construction, the following figure provides a continuous analog of the sequences $\eta^{(1)},\eta^{(2)}$ and $\eta^{(3)}$ for the case $N=3$.

\begin{figure}[h]
\includegraphics{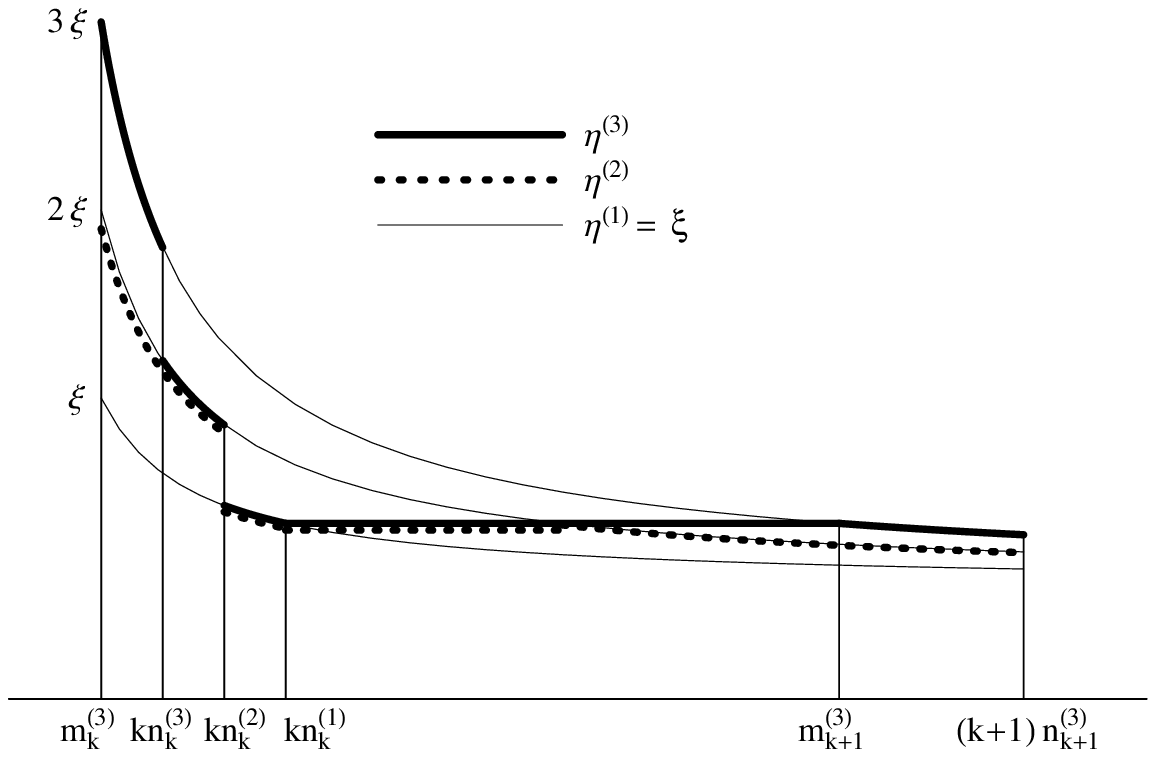}
\end{figure}

To prove that the cosets $diag~(\eta^{(j)})+[I,B(H)]$ are linearly independent, 
suppose $diag~\sum_{j=1}^{N} \lambda_j \eta^{(j)} \in [I,B(H)]$ for some $<\lambda_j> ~\in \mathbb C^N$. 
Since $[I,B(H)]$ is selfadjoint and the sequences $\eta^{(j)}$ are real-valued, 
one can assume without loss of generality that $<\lambda_j> ~\in \mathbb R^N$. 
Define $\zeta := \sum_{j=1}^{N} \lambda_j \eta^{(j)}$, $\gamma_1:=0$, 
$
\gamma_p:= \sum_{j=1}^{p-1} j\lambda_j ~\text{for}~ 2 \leq p \leq N
$
and
$
\beta_p := \sum_{j=p}^{N} \lambda_j ~\text{for}~ 1 \leq p \leq N.
$ 
A direct computation shows that $\zeta_1 = (\gamma_N + N\beta_N)\xi_1$ and that
\[
\zeta_i = 
\begin{cases}
(\gamma_p+p\beta_p) \xi_i,			&\text{if} ~m_k^{(p)} \leq i \leq kn_k^{(p)}~\text{for}~1 \leq p \leq N \\
\gamma_p \xi_i + \beta_p \xi_{kn_k^{(1)}},&\text{if} ~kn_k^{(1)} < i < m_{k+1}^{(N)} ~\text{and} \\
							&(p-1)\xi_i \leq \xi_{kn_k^{(1)}} < p \xi_i~\text{for}~2 \leq p \leq N.
\end{cases}
\]
Setting $\gamma = \max~|\gamma_p+p\beta_p|$, we claim that $|\zeta_i| \leq \gamma \xi_i$ for every $i$.
If $m_k^{(p)} \leq i \leq km_k^{(p)}$ for some $k \geq 1$ and some $1 \leq p \leq N$, then $|\zeta_i|=|\gamma_p+p\beta_p|\xi_i \leq \gamma \xi_i$.
And if $kn_k^{(1)} < i < m_{k+1}^{(N)}$ and $(p-1)\xi_i \leq \xi_{kn_k^{(1)}} < p \xi_i$ for some $k \geq 1$ and some $2 \leq p \leq N$, 
then $\zeta_i = \gamma_p \xi_i + \beta_p \xi_{kn_k^{(1)}}$.
Assuming that $\zeta_i \geq 0$, one has
\[
0 \leq \zeta_i \leq 
\begin{cases}
(\gamma_p+p\beta_p) \xi_i,							&\text{if}  ~\beta_p \geq 0 \\
(\gamma_p  + (p-1)\beta_p) \xi_i=(\gamma_{p-1} + (p-1)\beta_{p-1})\xi_i,	&\text{if} ~\beta_p < 0. 
\end{cases}
\]
In either case, $\zeta_i \leq \gamma \xi_i$. 
In the case that $\zeta_i < 0$, the same argument can be applied to $-\zeta_i$ to obtain the claim. 

The crux of the proof is to show that $\gamma =0$, whence an elementary computation will show
that the linear system of the $N$ equations 
$\gamma_p+p\beta_p = 0$ has only the trivial solution $\lambda_1=\lambda_2=\cdots=\lambda_N=0$, i.e., the $N$ cosets are linear independent.
To prove that $\gamma = 0$, 
first choose $1 \leq r \leq N$ for which $\gamma = |\gamma_r+r\beta_r|$.
Then for every $m_k^{(r)} \leq n \leq kn_k^{(r)}$ and every $n'\geq n$, 
one has $|\zeta_{n'}| \leq \gamma \xi_{n'} \leq \gamma \xi_n = |\zeta_n|$. 
This implies that  if $\pi: \mathbb N \rightarrow \mathbb N$ is an injection for which $|\zeta_\pi|$ is monotone nonincreasing, then 
$\{\zeta_i \mid i = m_k^{(r)},\dots,n \} \subset \{\zeta_{\pi(i)} \mid i=1,2,\dots,n\}$.
Define the set \begin{center}$\Lambda_n := \{ \pi(i) \mid i=1,2,\cdots,n \} \setminus \{ m_k^{(r)}, \cdots,n\}$. \end{center}
Then $card~\Lambda_n = m_k^{(r)}-1$ and, from the monotonicity of $\xi$ and the fact that $|\zeta_i|\leq \gamma\xi_i$ for every $i$,
\begin{align*}
|\sum_{i=1}^{n} \zeta_{\pi(i)}| &= |\sum_{i=m_k^{(r)}}^{n} \zeta_i + \sum_{i \in \Lambda_n} \zeta_i| 
\geq |\sum_{i=m_k^{(r)}}^{n} \zeta_i| - \sum_{i \in \Lambda_n} |\zeta_i| \\
&= \gamma\sum_{i=m_k^{(r)}}^{n} \xi_i - \sum_{i \in \Lambda_n} |\zeta_i| 
\geq \gamma (\sum_{i=m_k^{(r)}}^{n} \xi_i - \sum_{i \in \Lambda_n} \xi_i) \\
&\geq \gamma (\sum_{i=m_k^{(r)}}^{n} \xi_i - \sum_{i=1}^{m_k^{(r)}-1} \xi_i).
\end{align*}
If additionally $n_k^{(r)} \leq n \leq kn_k^{(r)}$,
then combining this with the inequality in (b) yields that 
$|\sum_{1}^{n} \zeta_{\pi(i)}| \geq \frac{\gamma}{2} \sum_{1}^{n} \xi_i$, 
that is, $|((\zeta_{\pi})_a)_n| \geq \frac{\gamma}{2}(\xi_a)_n$.
The assumption that $diag ~ \zeta \in [I,B(H)]$ implies, by Theorem \ref{T:DFWW}, that 
$|(\zeta_\pi)_a| \leq \rho$ for some $\rho \in \Sigma(I)$.
But then, by the first part of this proof, there exists an $m \in \mathbb N$ for which 
$(\frac{\rho}{\xi_a})_{mp_l} \rightarrow 0$.
As $(\frac{\rho}{\xi_a})_{mn_k^{(r)}}\geq \frac{\gamma}{2}$ for all $k \geq m$ and as $n_k^{(r)} \in \{p_l\}$, it follows that $\gamma=0$, which concludes the proof.
\end{proof}

In contrast to the other results on codimension in this paper, the proof of Theorem \ref{T: false converse for T: se or sc inf codim implies} does not seem to yield uncountable codimension.

\begin{corollary}\label{C: false converse for T: se or sc inf codim implies+}
The second part of the proof of Theorem \ref{T: false converse for T: se or sc inf codim implies} shows that 
its conclusion holds for a larger class: 
if $I$ is an ideal for which there exists a nonsummable sequence $\xi \in \Sigma(I)$ and  
a monotone sequence of indices $\{p_l\}$ so that for every $\chi \in \Sigma(I)$ there is an associated 
$m \in \mathbb N$ for which $(\frac{\chi}{\xi_a})_{mp_l} \rightarrow 0$, then $dim\, \frac{I}{[I,B(H)]} = \infty$. 
\end{corollary}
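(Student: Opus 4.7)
The plan is to replay the second half of the proof of Theorem \ref{T: false converse for T: se or sc inf codim implies}, checking that every step relies only on the two hypotheses of the corollary: $\xi \in \Sigma(I)\cap \text{c}_{\text{o}}^*\setminus\ell^1$, and for each $\chi \in \Sigma(I)$ some $m\in\mathbb N$ with $(\chi/\xi_a)_{mp_l}\to 0$.

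First I would fix an integer $N>1$ and construct the index sequences $\{m_k^{(j)}\}$ and $\{n_k^{(j)}\}$ for $1\le j\le N$ and $k\in\mathbb N$ exactly as in the theorem, verifying feasibility of each defining condition in turn. For (a) one selects $n_k^{(j)}$ from $\{p_l : l\ge k\}$, an infinite set since $\{p_l\}$ is strictly monotone. For (b), since $\xi\notin \ell^1$ and $p_l\to\infty$, the tail sum $\sum_{i=m_k^{(j)}}^{p_l}\xi_i$ diverges as $l\to\infty$, so (b) is achieved by taking $n_k^{(j)}=p_l$ for $l$ large enough. Conditions (c) and (d) then follow from $\xi\in \text{c}_{\text{o}}^*$ alone. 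Next I would define the sequences $\eta^{(j)}\in \text{c}_{\text{o}}^*$ by the same piecewise recipe as in the theorem; since $\eta^{(j)}\le j\xi\in\Sigma(I)$, each $\eta^{(j)}\in\Sigma(I)$ by hereditariness.

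The remaining step is the linear independence in $I/[I,B(H)]$ of the cosets $\mathrm{diag}\,\eta^{(j)}+[I,B(H)]$. Supposing $\zeta:=\sum_{j=1}^N \lambda_j \eta^{(j)}$ satisfies $\mathrm{diag}\,\zeta\in [I,B(H)]$, selfadjointness of $[I,B(H)]$ reduces the coefficients to real $\lambda_j$. I would then reproduce verbatim the block formulas for $\zeta_i$ and the pointwise bound $|\zeta_i|\le \gamma\,\xi_i$ with $\gamma=\max_p|\gamma_p+p\beta_p|$; these steps are purely algebraic. Choose $r$ with $\gamma=|\gamma_r+r\beta_r|$ and a monotonizing injection $\pi$. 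The same counting argument over $n_k^{(r)}\le n\le k n_k^{(r)}$ yields $|((\zeta_\pi)_a)_n|\ge \frac{\gamma}{2}(\xi_a)_n$ on that block. Now Theorem \ref{T:DFWW} provides some $\rho\in\Sigma(I)$ with $|(\zeta_\pi)_a|\le \rho$, and since $n_k^{(r)}\in\{p_l\}$, the abstracted hypothesis furnishes $m\in\mathbb N$ with $(\rho/\xi_a)_{mn_k^{(r)}}\to 0$, forcing $\gamma=0$. Solving $\gamma_p+p\beta_p=0$ for $1\le p\le N$ then gives $\lambda_1=\cdots=\lambda_N=0$, and since $N$ was arbitrary, $\dim I/[I,B(H)]=\infty$.

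The only substantive point to verify, and the one potential obstacle, is that condition (b) can be realized while confining $n_k^{(j)}$ to the prescribed set $\{p_l\}$; as noted above, this uses only $\xi\notin \ell^1$ together with the unboundedness of $\{p_l\}$. Every other step transports mechanically from the second half of the proof of Theorem \ref{T: false converse for T: se or sc inf codim implies}, which never invoked the explicit construction of $\xi$ or the ambient structure of $I$ beyond the two properties now assumed outright.
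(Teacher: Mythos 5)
Your proposal is correct and takes essentially the same approach as the paper, which for this corollary simply points to the second half of the proof of Theorem \ref{T: false converse for T: se or sc inf codim implies}; you have verified, as intended, that each step there (construction of $m_k^{(j)}, n_k^{(j)}, \eta^{(j)}$, the pointwise bound $|\zeta_i|\le\gamma\xi_i$, the monotonization/counting estimate, and the application of Theorem \ref{T:DFWW}) uses only the nonsummability of $\xi$, membership $\xi\in\Sigma(I)$, and the abstracted decay property along $\{mp_l\}$.
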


Acknowledgments. We wish to thank Daniel Beltita for his input on this paper, and Ken Davidson and Allan Donsig for their early input on the research.

\end{document}